\documentclass[11pt,reqno]{amsart}
\usepackage{color}
\DeclareMathOperator{\A}{\mathcal A}
\newtheorem{theorem}{Theorem}[section]
\newtheorem{lemma}[theorem]{Lemma}
\newtheorem{corollary}[theorem]{Corollary} 

\theoremstyle{definition}
\newtheorem{definition}[theorem]{Definition}
\newtheorem{example}[theorem]{Example}
\newtheorem{proposition}[theorem]{Proposition}
\newtheorem{question}[theorem]{Question}

\theoremstyle{remark}
\newtheorem{remark}[theorem]{Remark} 
\usepackage{amsmath, amssymb}
\usepackage{mathrsfs}

\begin{document}

\title{topological extensions of elementary measurable spaces}

\author{Nidaa Hasan Haji}
\address{Department of Mathematics and computer applications, College of Science, Al-Muthanna University, Iraq}
\email{nedaa.hassan@mu.edu.iq}

\author{Abdolaziz Hesari}
\address{Department of Mathematics, Shahid Chamran University of ahvaz, Ahvaz, Iran}
 \email{ahsaeiaziz@gmail.com}

\subjclass[2010]{28A05, 28A20, 54C20, 54D35, 54D80} \keywords{Elementary spaces; Real-valued elementary functions; Extensions of elementary spaces;  Compact elementary spaces; Realcompact elementary spaces; maximal ring of quotients.}
\begin{abstract}
An elementary space is a pair $(X, \A)$, where $\A$ is an algebra of subsets of $X$. Every  elementary space $(X, \A)$ is naturally  a topological space when equipped with the topology $\tau_{\A}$ generated by $\A$.
 Elementary mappings on elementary spaces are defined in a manner analogous to measurable mappings on measurable spaces. In addition, the notion of an {\it  extension} of an elementary space is introduced.  Let $M(X, \A)$, denote the ring of all real-valued elementary functions on  $(X, \A)$.  In this article, we study the ring $M(X, \A)$ and  extensions  of an elementary space. We show that, for any elementary space $(X, \A)$,
 there exists a unique compact elementary extension  $(\beta_{\A}X, \bar\A)$ of $X$, analogous to the Stone-$\check C$ech compactification,  which contains every other extension of $X$. This extension has the property that every elementary map from $X$ into any compact topological space extends continuously to $\beta_{\A}X$.
 Next, we show that $\beta_{\A}X$ contains a realcompact extension $(\upsilon_{\A}X, \bar\A_R)$ of $X$, such that every elementary map from $X$ into any realcompact topological space extends continuously to $\upsilon_{\A}X$. Moreover, $\upsilon_{\A}X$ is the smallest realcompact  extension of $X$ and $(X, \A)$ is realcompact if and only if $X=\upsilon_{\A}X$. Also, we   see that, every metric space equipped with the $\sigma$-algebra of all Borel sets is realcompact.
 Finally, we observe that the maximal ring of quotients of $M(X, \A)$  is the direct limit of locally constant (continuous) functions on dense open subsets of $(X, \tau_{\A})$. Some questions raised in the literature are also answered.

 \end{abstract}
\maketitle
% section1 --------   Introduction 
\section{Introduction} 
An algebra in a nonempty set $X$, is a nonempty subfamily $\A$ of subsets of $X$ which  is closed under   finite unions and complementation. In this case,   the pair $(X, \A)$ is called  an {\it elementary measurable} space or simply elementary space and the members of $\A$ are called {\it elementary} sets. Moreover, if the family $\A$ is closed under countable unions, it is called a $\sigma$-algebra, the pair $(X, \A)$,  a measurable space and the members of $\A$ are measurable sets. A mapping $f $ from  a  measurable space $(X, \A)$ into  a topological space $Y$ is called measurable  if $f^{-1}(U)\in \A$, for every open subset $U$ of $Y$.  A mapping such that  is called an {\it elementary} map, whenever $(X, \A)$ is only an elementary space. In either case, in this paper, these  mappings are referred to as $\A$-measurable, to indicate the algebra or $\sigma$-algebra $\A$ in the set  $X$. Whenever $(X, \A)$ is a measurable space,  the set of all real-valued $\A$-measurable functions on $X$ is denoted by $M(X, \A)$. It is a   well-known result that, this set  is a commutative lattice-ordered ring with unit. 

The ring of all real-valued $\A$-measurable functions on a measurable space have been studied by many authors. The first paper concerning this ring dates back to 1966 \cite{G}. In a recent work, for a measurable space $(X, \A)$,  Estaji et al. in \cite{EMY}, use the set of all prime ideals of $\A$ (as a Boolean algebra) to characterize maximal ideals of $M(X, \A)$ and  prove a measure-theoretic analogue of the Gelfand-Kolomogroff theorem. In \cite{ABS}, the authors, initiate a duality between the maximal ideals of $M(X, \A)$ and $\A$-ultrafilters (filters  consisting only of measurable sets) on $X$. Using this duality, it has been shown that the structure space of $M(X, \A)$ (regarded with the hull-kernel topology) is isomorphic to the set $\hat X$, consisting of all $\A$-ultrafilters on $X$, equipped with the Stone topology. Then, they investigate the quotient rings of $M(X, \A)$, where $(X, \A)$ is a measurable space, and prove that each ideal of  $M(X, \A)$ is absolutely convex. Using this, maximal ideals of $M(X, \A)$ characterized  as those ideals $I$ where $M(X, \A)/I$ is totally ordered, see \cite[Theorem 3.5]{ABS}. Thus, a maximal ideal in $M(X, \A)$ is called real, provided that $M(X, \A)/M\simeq\mathbb R$. In addition, real maximal ideals of $M(X, \A)$ has been characterized in terms of their associated $\A$-ultrafilters on $X$, see \cite[Theorem 3.9]{ABS}.

 Acharyya et al. in \cite{AABS}, which is a continuation of \cite{ABS}, defined realcompact measurable spaces as those measurable spaces $(X, \A)$ in which every real maximal ideal of $M(X, \A)$ is fixed. They also, show that $M(X, \A)$ can be embedded monomorphically in $C(RMax(X, \A))$, where $RMax(X, \A)$ is the subspace of the structure space of $M(X, \A)$ consisting of all real maximal ideals of $M(X, \A)$.

In this paper, we investigate elementary spaces and the ring of  real-valued elementary functions on such spaces. Specifically, we equip a set $X$ with an algebra $\A$, rather than the stronger structure of a $\sigma$-algebra, and observe that for many of practical applications, it is sufficient for  $\A$ to be closed under finite unions  and intersections. However, if one wishes to take limits of   countable families of elementary functions, then  $\A$ must necessarily  be a $\sigma$-algebra.

 We develop the theory of the ring of  elementary functions  in a manner analogous to the theory of the ring $C(Y)$, the ring of all continuous real-valued functions on a topological space $Y$. To  this end, we introduce a fundamental tool: for an elementary space $(X, \A)$, we equipe the set $X$ by a topology  $\tau_{\A}$ generated by $\A$, which establishes  a connection between $M(X, \A)$ and $C(X)$ and plays a central role throughout the paper. As in the classical theory of topological extensions, we make the following definition: an elementary space $(T, \A')$ is said to be an elementary extension of  $(X, \A)$ provided  that $X$ is a dense subspace of $T$ with respect to the topology $\tau_{\A'}$ and  $\A=\{A'\cap X : A'\in \A'\}$. This leads naturally to the study of compact and realcompact extensions of an elementary space. In turn, this enables us to characterize compactness and realcompactness and to develop the theory of extending mappings on elementary spaces.

Here is an outline of the structure of the paper.
After some preliminaries in section 2, that are needed in the sequel, in section 3, we investigate elementary extensions of  a given elementary space $(X, \A)$ and show that they  all lie in a unique maximum elementary extension denoted by $(\beta_{\A}X, \bar\A)$. This elementary extension has this property that every $\A$-measurable mapping from $X$ into any compact topological space $Y$ has a continuous extension to $\beta_{\A}X$. In particular, whenever $\A$ is a $\sigma$-algebra in $X$, we prove that $M^*(X, \A)\simeq C(\beta_{\A}X)$, where $M^*(X, \A)$ is the set of all bounded real-valued $\A$-measurable functions on $X$. We also observe that, $\beta_{\A}X=\beta X$ if and only if $\A=Z(X)$.

In section 4, we characterize real maximal ideals of $M(X, \A)$ in terms of elementary sets. We then  introduce a realcompact  extension of an elementary space $(X, \A)$, denoted by $(\upsilon_{\A}X, \bar\A_R)$. It turns out that $(\upsilon_{\A}X, \bar\A_R)$ is the smallest realcompact extension of $(X, \A)$ and $(X, \A)$ is realcompact if and only if $X=\upsilon_{\A}X$.
 Moreover, every $\A$-measurable mapping from $(X, \A)$ into any realcompact topological space $Y$ extends continuously to $\upsilon_{\A}X$. In particular, whenever $\A$ is a $\sigma$-algebra, we observe that $M(X, \A)\simeq M(\upsilon_{\A}X, \bar\A_R)$. Finally, we show that every metric space equipped with the $\sigma$-algebra of all Borel sets is a realcompact measurable space.

Section 5, is due to investigation of (the maximal) rings of quotients of $M(X, \A)$, for an elementary space $(X, \A)$. We show that, the maximal ring of quotients of $M(X, \A)$, $\mathcal Q(M(X, \A))$, is the direct limit of locally constant (continuous) functions on dense open subsets of $(X, \tau_{\A})$. In particular, whenever $\A$ is a $\sigma$-algebra, we deduce that $\mathcal Q(M(X, \A))=\mathcal Q(X)$.
 Finally, in section 6, we characterize those measurable spaces $(X, \A)$ for which $M(X, \A)\simeq C(Y)$, for some topological space $Y$.

In this article, all rings are considered to be commutative unitary rings, ideals are proper ideals and topological spaces are assumed to be completely regular Hausdorff (Tychonoff) spaces. The set complement of any subset $A$ of a set $X$, is denoted by $A^c$. The cardinal of a set $X$ is denoted by $|X|$.  A subsets of a topological space that is simultaneously clopsed and open is called {\it clopen} and the set of all clopen subsets of a space $X$ is denoted by $\text{CO}(X)$.
 We denote by $C(X)$ (or $C(X, \tau)$ to indicate the topology on $X$) the ring of all real-valued continuous functions on a topological  space $X$ and $C^*(X)$ is the subset of $C(X)$ consisting of bounded functions. For each $f \in \mathbb R^X$ the {\it zero-set} $Z(f)$ is the set of zeros of $f $ and its complement, $X\setminus Z(f)$, is called the {\it cozero-set} of $f$ and it is denoted by $\text{coz\,}f$. The set of all zero-sets in (a topological space) $X$ is denoted by $Z(X)$ and for each subset $S\subseteq \mathbb R^X$, $Z[S]$ is the set of all zero-sets of the form $Z(f)$, where $f \in S$. For any subset $A$ of a set $X$, the characteristic function of $A$ is denoted by $\chi_A$ and is defined as follows:
\begin{equation*}
\chi_A = \left\{
\begin{array}{rl}
1 & \text{if } x \in A\\
0 & \text{if } x \notin A.
\end{array} \right.
\end{equation*}
Let $\beta X$ and $\upsilon X$ be the  Stone-$\check{C}$ech compactification and the Hewitt realcompactification of $X$ respectively. For every $f \in C^*(X)$ (resp., $f \in C(X)$)  the unique extension of $f$ to a continuous function in $C(\beta X)$ (resp., $C(\upsilon X)$) is denoted by $f^\beta$ (resp., $f^\upsilon$). For each $p \in \beta X$; $M^p = \{f \in C(X) : p \in \text{cl}_{\beta X}Z(f)\}$ is a maximal ideal of $C(X)$. In fact, every maximal ideal of $C(X)$ is precisely of the form $M^p$, $p \in\beta X$.

\section{Preliminaries}
First, let us recall some definitions and notations from \cite{R}. A collection  $\A$ of subsets of a set $X$ is called an {\it algebra} (or a {\it filed} as  in \cite{W1}) in $X$, if it satisfies the following conditions:
\begin{enumerate}
\item $X\in\A$,
\item If $A\in \A$, then $A^c\in\A$,
\item If $A_1, A_2\in\A$, then $A_1\cup A_2\in\A$.
\end{enumerate}
Then the pair $(X, \A)$  is called  an elementary measurable space, or simply an  elementary space, and the members of $\A$ are called elementary  sets. Moreover, if for any collection $\{A_i\}_{i=1}^\infty\subseteq\A$ we have $\bigcup_{i=1}^\infty A_i\in\A$, then $\A$ is called a  $\sigma$-algebra, $(X, \A)$    a  measurable space and the members of $\A$ are called measurable sets.
 An intersection of any family of (resp., $\sigma$-) algebras containing a given collection $\mathcal B$ of subsets of a set $X$, is again an (resp., $\sigma$-) algebra. Thus, for any collection $\mathcal B$ of subsets of a set $X$, the  smallest ($\sigma$-) algebra  that contains $\mathcal B$ exists, see also \cite[Theorem 1.10]{R}. The smallest $\sigma$-algebra   containing a collection $\mathcal B$ of subsets of a set $X$ is denoted by $<\mathcal B>$. Following \cite[2.7]{W1}, an algebra $\A$ in a set $X$ is said to be {\it reduced}, if for every pair of distinct points of $X$ there is a member of $\A$ containing one and not the other; see also \cite[Definition 5.4]{EMY}. An elementary space $(X, \A)$ is called reduced, if its algebra $\A$ is reduced. 

Let $(X, \A)$ be a reduced elementary space. Since the algebra $\A$ is closed under finite intersections, it naturally serves as a base for a topology   on $X$. We denote by $\tau_{\A}$   the topology generated by the algebra $\A$. Henceforth, whenever an algebra $\A$ is defined in a set $X$, the set $X$ will be considered  both as  an elementary space and as a topological space.  Clearly every elementary set  $A\in\A$ is clopen in this topology and since every clopen set in a topological space, is a zero-set, thus $(X, \tau_{\A})$ is a completely regular Hausdorff   space, by \cite[Theorem 3.2]{GJ}. In fact, it is a $P$-space (i.e., a space whose every zero-set is open) whenever $\A$ is a $\sigma$-algebra; see \cite[4J]{GJ}, for more details on $P$-spaces. The following lemma also  answers a question  raised in \cite{ABS}; see Question 5.6 in the same reference.
\begin{lemma}\label{Z(X)=B(X)}
If $(X, \A)$ is a reduced measurable space, then  $(X, \tau_{\A})$ is a $P$-space.
\end{lemma}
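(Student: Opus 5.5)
The plan is to show directly that every zero-set of $(X, \tau_{\A})$ is open. The key point is that a countable intersection of basic clopen sets lies in $\A$, because $\A$ is a $\sigma$-algebra.

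Let $f\in C(X, \tau_{\A})$ and set $Z=Z(f)=\bigcap_{n=1}^\infty U_n$, where $U_n=\{x\in X : |f(x)|<1/n\}$. Each $U_n$ is open in $\tau_{\A}$, and $Z\subseteq U_n$. We fix $x\in Z$ and aim to find some $A\in\A$ with $x\in A\subseteq Z$.

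Since $\A$ is a base for $\tau_{\A}$, for each $n$ we can choose $A_n\in\A$ with $x\in A_n\subseteq U_n$. Put $A=\bigcap_{n=1}^\infty A_n$. Because $\A$ is closed under complements and countable unions, $A=\bigl(\bigcup_{n} A_n^c\bigr)^c\in\A$. Clearly $x\in A$, and $A\subseteq\bigcap_n U_n=Z$. So every point of $Z$ has an elementary, and hence open, neighbourhood inside $Z$. Therefore $Z$ is open and $(X, \tau_{\A})$ is a $P$-space. Complete regularity and the Hausdorff property were already noted in the paragraph before the lemma, where reducedness of $\A$ is used, so the space is Tychonoff as the paper's convention on $P$-spaces requires.

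There is no serious obstacle. The only delicate point is that the neighbourhoods $A_n$ depend on $x$. This does not matter, since we show openness pointwise and never need $Z$ itself to belong to $\A$. As a remark, one can also show $Z\in\A$: $Z$ is a zero-set in a space with a clopen base closed under finite intersections, but this requires extra work and is not needed. An alternative route through the criterion in \cite[4J]{GJ} that every $G_\delta$ containing a point contains a neighbourhood of it is essentially the same argument, so I would present the direct version above.
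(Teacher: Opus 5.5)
Your argument is correct and is essentially the paper's proof: for $p\in Z(f)$ choose $A_n\in\A$ with $p\in A_n\subseteq f^{-1}(-\frac1n,\frac1n)$, and intersect to get $p\in\bigcap_n A_n\in\A$ with $\bigcap_n A_n\subseteq Z(f)$, so $Z(f)$ is open. One correction to your aside: it is not true in general that $Z(f)\in\A$. For the countable/co-countable $\sigma$-algebra on an uncountable set, singletons lie in $\A$, so $\tau_{\A}$ is discrete and every subset is a zero-set, yet $\A\neq\mathcal P(X)$; you should drop that remark rather than leave it as ``extra work''.
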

\begin{proof}
Let $Z=Z(f)$ for some $f\in C(X)$ and $p\in Z$. Since $\A$ is a base for the open sets, for every $n\ge1$, there exists a measurable set $A_n\in\A$ such that $p\in A_n\subseteq f^{-1}(-\frac{1}{n}, \frac{1}{n})$. Hence $A_0=\bigcap_{n=1}^\infty A_n\in\A$ and $p\in A_0\subseteq Z$. This shows that $Z$ is open (clopen) and then by \cite[4J]{GJ}, we conclude that $(X, \tau_{\A})$ is a $P$-space.
\end{proof}
The assumption that $\A$ is a $\sigma$-algebra is  necessary   in the preceding lemma, as demonstrated by the following example. 

\begin{example}
Let $X=\mathbb R$ be the set of all real numbers and $\A$ be the  algebra generated by the family $\{[a, b) : a, b\in \mathbb R \text{ and } a<b \}$. Now, clearly $\tau_{\mathbb R}\subseteq \tau_{\A}$, where $\tau_{\mathbb R}$ is the usual topology on $\mathbb R$. Therefore, the identity function $i : (X, \tau_{\A})\to \mathbb R$ is continuous, but $Z(f)=\{0\}$ is not a clopen subset of $ (X, \tau_{\A})$.
\end{example}
Let  $(X, \A)$ be an elementary space and $S\subseteq X$. Then  it is easy to see that $\mathcal A|_S:=\{A\cap S : A\in \mathcal A\}$ is an  algebra  in   $S$. The subset $S$ with this  algebra is called an {\it elementary subspace} of $(X, \mathcal A)$ and is denoted by $(S, \A|_S)$. In this case, it is clear that $(S, \tau_{\A|_S})$ is a topological subspace of $(X, \tau_{\A})$, i.e., it has the subspace topology.   In addition, if $S$ is a (topologically) dense subset of $X$, then the latter  is called an {\it elementary extension} or simply an {\it extension} (when there is no ambiguity) of $(S, \A|_S)$. Note that, if $\A$ is a $\sigma$-algebra in $X$, then so is $\A|_S$, and in this case the pair $(S, \A|_S)$ is called a {\it measurable subspace} of $X$ and the latter is called a {\it measurable extension} of $S$, whenever $S$ is dense in $X$.

\begin{example}
Let $\mathfrak M$ be the $\sigma$-algebra of Lebesgue measurable sets of $\mathbb R^n$. Then it is easy to see that $\mathfrak M|_{\mathbb R^m}$ is the $\sigma$-algebra of Lebesgue measurable sets of $\mathbb R^m$, for any $1\le m<n$.
\end{example}

 The following lemma provides a characterization of the elementary sets in  an  extension $T$ of  a given elementary space $X$.
\begin{lemma}\label{m-sets of extensions}
Let $(T, \A')$ be an  extension of  an elementary space $(X, \A)$, then 
$$\A'=\{\text{cl}_T A : A\in \A \}.$$
\end{lemma}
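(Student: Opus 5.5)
We prove the two inclusions separately. Throughout, $(T,\A')$ is an extension of $(X,\A)$. This means $X$ is dense in $(T,\tau_{\A'})$ and $\A=\A'|_X=\{A'\cap X : A'\in\A'\}$. The one topological fact we use is that every member of $\A'$ is clopen in $\tau_{\A'}$, as noted in the preliminaries.

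\emph{Step 1: every $A'\in\A'$ equals the closure of its trace on $X$.} Let $A'\in\A'$ and set $A=A'\cap X$, which lies in $\A$ by definition of the trace algebra. Since $A'$ is closed in $T$ and $A\subseteq A'$, we get $\text{cl}_T A\subseteq A'$. For the reverse inclusion, take $t\in A'$ and any open neighbourhood $U$ of $t$. Then $U\cap A'$ is a nonempty open set, because $A'$ is open. By density of $X$, the set $U\cap A'$ meets $X$, so $U$ meets $A'\cap X=A$. Hence $t\in\text{cl}_T A$. This gives $A'=\text{cl}_T(A'\cap X)$, and therefore $\A'\subseteq\{\text{cl}_T A : A\in\A\}$.

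\emph{Step 2: the closure of every $A\in\A$ lies in $\A'$.} Let $A\in\A$. Choose any $A'\in\A'$ with $A'\cap X=A$; such an $A'$ exists by the definition of extension. By Step 1, $\text{cl}_T A=\text{cl}_T(A'\cap X)=A'\in\A'$. This proves $\{\text{cl}_T A : A\in\A\}\subseteq\A'$, and the two inclusions give the equality.

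The only step with any content is the density argument in Step 1, and it relies essentially on $A'$ being \emph{open}, not merely closed. Step 2 shows as a by-product that the set $A'$ with $A'\cap X=A$ is unique, namely $\text{cl}_T A$; we need nothing about this beyond what Step 1 already gives. No obstacle is expected beyond stating the density argument carefully.
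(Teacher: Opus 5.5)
Your proof is correct and follows the paper's argument: both rest on the identity $A'=\text{cl}_T(A'\cap X)$ for $A'\in\A'$, which holds because $A'$ is clopen in $(T,\tau_{\A'})$ and $X$ is dense, and both use it for the two inclusions. The only difference is that you write out the density step (openness of $A'$ for one inclusion, closedness for the other), which the paper leaves implicit.
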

\begin{proof}
 Since $(T, \A')$ is an  extension of $X$,  $ A'\cap X\in\A$, for any $ A'\in \A'$. On the other hand, for any $A\in \A$ there is an elementary set $ A'\in\A'$ such that $A= A'\cap X$ and thus $\text{cl}_T A=\text{cl}_T ( A'\cap X)= A'$, as $X$ is dense in $T$ and $ A'$ is clopen in $(T, \tau_{\A'})$.
\end{proof}

\begin{definition}
Let $(X, \A)$ and $(Y, \A')$ are two elementary spaces. A mapping $f: (X, \A) \to (Y, \A')$ is called  an elementary homomorphism if $f^{-1}( A')\in\A$, for any $ A'\in\A'$.
\end{definition}
An elementary isomorphism between two elementary spaces $(X, \A)$ and $(Y, \A')$, is a bijective mapping $\varphi : (X, \A) \to (Y,\A')$ such that  $\varphi$ and $\varphi^{-1}$ are both  elementary homomorphisms; see also \cite[Definition 2.3]{AABS},  for the same notion between measurable spaces. Elementary spaces that are elementary isomorphic,  considered to be identical.

\begin{definition}
Let $(X, \A)$ be an elementary space, $Y$ a topological space and $f : X\to Y$ a mapping. Then $f$ is called $\A$-measurable if $f^{-1}(V)\in\A$, for every open subset $V\subseteq Y$.
\end{definition}

For   an elemetary space $(X, \A)$, the family of all real-valued $\A$-measurable functions on $X$ is denoted by $M(X, \A)$. It is easy to see that $M(X, \A)$ is a ring (in fact a lattice-ordered $\mathbb R$-algebra) under pointwise addition and multiplication. In fact, $M(X, \A)$ is a subalgebra of $C(X, \tau_{\A})$, as every member of $\A$ is open in $(X, \tau_{\A})$. In what follows, $M(X, \A)$ stands for the ring of all real-valued $\A$-measurable functions on an elementary space $(X, \A)$.

Using \cite[Proposition 5.6]{EMY}, for any measurable space $(X, \A)$ there is a reduced measurable space $(Y, \A')$ such that $M(X, \A)\simeq M(Y, \A')$. Thus, from an algebraic viewpoint, we may always assume that the background measurable space is reduced. The same fact holds in any elementary space, as it is straightforward   to verify that the  proof of  \cite[Proposition 5.6]{EMY} and its preceding lemma carries over without modification. Just note  that, if $(X, \A)$ is an elementary space then \cite[Proposition 3.2]{EMY} is  holds in $M(X, \A)$ and we have $\A=\{Z(f) : f\in M(X, \A)\}=:Z_{\A}[X]$. In fact, if $f\in M(X, \A)$ then $Z(f)=X\setminus f^{-1}(\mathbb R\setminus\{0\})\in\A$. On the other hand, for any $A\in\A$ we have $A=Z(1-\chi_{A})$, where $\chi_A$ is the characteristic function of $A$, which is $\A$-measurable.  Thus, in the sequel,  all given elementary spaces are  assumed to be reduced. 

The following lemma is needed in the sequel as well; see also \cite[Proposition 3.3]{EMY}, for a counterpart on a measurable space.
\begin{lemma}\label{units}
Let $(X, \A)$ be an elementary space. Then an element $f\in M(X, \A)$ is unit if and only if $Z(f)=\emptyset$.
\end{lemma}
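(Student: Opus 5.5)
The plan is to handle the two implications separately; the only real work is checking that $1/f$ is again $\A$-measurable. The forward direction is immediate. If $f$ is a unit, there is $g\in M(X, \A)$ with $fg=1$ pointwise. Then $f(x)\neq 0$ for every $x\in X$, so $Z(f)=\emptyset$.

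For the converse, suppose $Z(f)=\emptyset$ and define $g=1/f$ pointwise; then $fg=1$, and it remains to show $g\in M(X, \A)$. Write $\phi:\mathbb R\setminus\{0\}\to\mathbb R\setminus\{0\}$, $\phi(t)=1/t$, which is a homeomorphism. Let $V\subseteq\mathbb R$ be open. Since $f$ never vanishes, we have
$$g^{-1}(V)=g^{-1}(V\setminus\{0\})=f^{-1}\bigl(\phi^{-1}(V\setminus\{0\})\bigr).$$
Here $\phi^{-1}(V\setminus\{0\})$ is open in $\mathbb R\setminus\{0\}$, hence open in $\mathbb R$. By $\A$-measurability of $f$, $g^{-1}(V)\in\A$. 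Thus $g$ is $\A$-measurable and is the inverse of $f$ in $M(X, \A)$.

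The main point to watch is that $\A$ is only an algebra, so we cannot approximate $1/f$ by countable operations. The argument above avoids this entirely: the definition of $\A$-measurability only asks that preimages of open sets lie in $\A$, and the preimage computation for $g$ reduces directly to a single preimage under $f$. So no closure beyond what the definition provides is needed.
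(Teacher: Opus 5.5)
Your proof is correct and follows the paper's argument: the forward direction is the same pointwise observation, and the converse writes $1/f$ as $f$ composed with $t\mapsto 1/t$. Your explicit preimage computation through $\mathbb R\setminus\{0\}$ is a bit more careful than the paper, which writes $h(x)=\frac{1}{x}\in C(\mathbb R)$ even though $1/x$ is not defined at $0$.
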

\begin{proof}
Necessity. Suppose that $f\in M(X, \A)$ is unit, then $fg=1$, for some $g\in M(X, \A)$. This implies that $Z(f)\subseteq Z(fg)=Z(1)=\emptyset$.

Sufficiency. If $Z(f)=\emptyset$, then $g=\frac{1}{f}=h\circ f$, where $h(x)=\frac{1}{x}\in C(\mathbb R)$. Now, since  a continuous function of an $\A$-measurable function is  $\A$-measurable and $fg=1$, we are done.
\end{proof}

\section{ Compact elementary  spaces}

In this section we  study the extensions of an elementary space. More specifically, we observe that if an elementary space $(X, \A)$ is given, then there exist a unique   extension $(\beta_{\A}X, \bar\A)$ of $X$ which  contains every other  elementary extension of $X$ in a natural way. Moreover, this  extension is to the elementary space $(X, \A)$ (in the category of elementary spaces and elementary homomorphisms) as $\beta Y$ to a topological space $Y$. First, we state and prove some results on  extensions of a given elementary space $(X, \A)$, then we present this maximum elementary extension.

Recall from \cite{W1}, that a {\it Boolean algebra} is a distributive lattice $L$ which is complemented (i.e., it contains distinct elements $0$ and $1$ such that, for every  $a\in L$,  $0\le a\le1$  and  there exists $a'\in L$, where $a\wedge a'=0$ and $a\vee a'=1$).   Since every algebra is naturally a Boolean algebra (see \cite[2.7]{W1}), the standard notion of a filter in a Boolean algebra can be applied directly to an elementary space. We make this explicit in the following definition.
\begin{definition}
Let $(X, \A)$ be an elementary space. A nonempty family  $\mathcal F\subseteq \A$  is called an $\A$-filter on $X$ if it satisfies the following conditions:
\begin{enumerate}
\item $\emptyset\notin \mathcal F$,
\item if $A, B\in\mathcal F$, then $A\cap B\in\mathcal F$,
\item if $A\in\A$ and $B\in\mathcal F$ where $B\subseteq A$, then $A\in\mathcal F$.
\end{enumerate}
\end{definition}
 An $\A$-filter is said to be an $\A$-ultrafilter if it is maximal in the set of all $\A$-filters on $X$ under the set inclusion. Using this notion and since $\A=Z_{\A}[X]$, by the argument preceding Lemma \ref{units}, one obtains a natural   duality between  $\A$-filters on an elementary space $(X, \A)$  and  ideals in the ring $M(X, \A)$; see also \cite[Theorem 3.5]{EMY}. Consequently, any result   stated in terms of $\A$-filters can be readily translated into the language  of ideals. Now, using  \cite[Proposition 2.6]{W1} and Zorn's lemma, we have the following lemma which is needed in the sequel. 
\begin{lemma}\label{A-ultrafilter}
Let $(X, \A)$ be an elementary space and $\mathcal F$ be an $\A$-filter on $X$. Then the following statements hold.
\begin{enumerate}
\item $\mathcal F$ is contained in an $\A$-ultrafilter.
\item $\mathcal F$ is an $\A$-ultrafilter if and only if  for each $A\in \A$, either $A\in\mathcal F$ or $A^c\in \mathcal F$.
\item If $\mathcal F$ and $\mathcal G$  are distinct $\A$-ultrafilters, then there are elementary sets 
$A_1\in \mathcal F$ and $A_2\in \mathcal G$ such that $A_1\cap A_2=\emptyset$. 
\end{enumerate}
\end{lemma}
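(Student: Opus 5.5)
We prove the three parts in order, treating $\A$ throughout as a Boolean algebra of sets (ordered by inclusion, with $\emptyset$ and $X$ as bottom and top), so that standard filter arguments apply.

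For (1) we use Zorn's lemma. Let $\Sigma$ be the set of all $\A$-filters containing $\mathcal F$, ordered by inclusion; $\Sigma$ is nonempty because $\mathcal F\in\Sigma$. Let $\mathcal C$ be a chain in $\Sigma$. Then $\bigcup\mathcal C$ is again an $\A$-filter:
\begin{itemize}
\item it omits $\emptyset$, since each member does;
\item it is closed under finite intersections, because any two members $A,B$ lie in a common filter of the chain;
\item it is upward closed within $\A$, because each member of the chain is.
\end{itemize}
So every chain has an upper bound in $\Sigma$, and Zorn's lemma gives a maximal element. This element is maximal among all $\A$-filters, since any $\A$-filter containing it also contains $\mathcal F$. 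Hence it is an $\A$-ultrafilter containing $\mathcal F$.

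For (2), sufficiency is short. Suppose $\mathcal F$ contains $A$ or $A^c$ for every $A\in\A$, and let $\mathcal G\supseteq\mathcal F$ be an $\A$-filter. Take $A\in\mathcal G$. If $A\notin\mathcal F$, then $A^c\in\mathcal F\subseteq\mathcal G$, so $\emptyset=A\cap A^c\in\mathcal G$, which is impossible. Hence $\mathcal G=\mathcal F$, and $\mathcal F$ is maximal.

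Necessity is the only step needing a small construction. Let $\mathcal F$ be an $\A$-ultrafilter and suppose $A\notin\mathcal F$. We first claim that $A^c\cap F\neq\emptyset$ for all $F\in\mathcal F$. Otherwise there is $F\in\mathcal F$ with $F\subseteq A$, and upward closure forces $A\in\mathcal F$, a contradiction. Now set
$$\mathcal G=\{B\in\A : B\supseteq A^c\cap F \text{ for some } F\in\mathcal F\}.$$
This family is an $\A$-filter: it omits $\emptyset$ by the claim, it is closed under intersections because $\mathcal F$ is, and it is upward closed by definition. It contains both $\mathcal F$ and $A^c$, and every set involved lies in $\A$ since $\A$ is an algebra. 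Maximality of $\mathcal F$ gives $\mathcal G=\mathcal F$, so $A^c\in\mathcal F$. This argument is essentially \cite[Proposition 2.6]{W1}.

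For (3), let $\mathcal F\neq\mathcal G$ be $\A$-ultrafilters. Neither contains the other: if $\mathcal F\subseteq\mathcal G$, then maximality of $\mathcal F$ would force equality. So there is some $A_1\in\mathcal F\setminus\mathcal G$. By (2), $A_2:=A_1^c\in\mathcal G$, and $A_1\cap A_2=\emptyset$, as required.

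The main point needing care is the necessity direction of (2), namely checking that $\mathcal G$ avoids $\emptyset$; this is exactly what the claim above secures.
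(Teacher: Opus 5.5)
Your proof is correct and matches the paper's approach. The paper gives no separate argument: it simply invokes Zorn's lemma together with the standard Boolean-algebra filter facts of \cite[Proposition 2.6]{W1}, and your write-up spells out exactly that. One small nit: an empty chain must be bounded by $\mathcal F$ itself, since $\bigcup\mathcal C=\emptyset$ is not a nonempty family.
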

A central problem in the theory of extensions of a topological space $X$ is that of extending the mappings belonging to a certain class of mappings on $X$. The basic tool for extending  continuous functions on a topological space $X$ is the concept of  $z$-filter. Specifically, this  happened by adjoining to $X$ one new point for each free $z$-ultrafilter, to serve as its limit. The result is the Stone-$\check C$ech compactification $\beta X$ of $X$, in which every continuous mapping from $X$ into any compact topological space $Y$ has a continuous extension. This leads to the following definition and terminologies.

Let $(X, \A)$  be an elementary subspace of  an elementary space   $(T, \A')$.
\begin{itemize}
\item  An $\A$-filter $\mathcal F$ (resp., ideal $I$ in $M(X, \A)$) on $X$ is called fixed if $\bigcap_{A\in\mathcal F}A\ne\emptyset$ (resp., $\bigcap_{f\in I}Z(f)\ne\emptyset$); otherwise is called free. 
\item We say that an $\A$-filter $\mathcal F$ on $X$ is convergent to $p\in T$ if every neighborhood of $p$ (in $T$) contains a member of $\mathcal F$.
\end{itemize}
To develop a  theory for extending mappings of a certain class  on an elementary space, we first need some preliminary lemmas and  corollaries.

\begin{lemma}\label{A-ultrafilters on X}
Let $(X, \A)$  be an elementary subspace of   an elementary space  $(T, \A')$,  $A_0\in\A$ and $p\in\text{cl}_TA_0$. Then there is at least one $\mathcal A$-ultrafilter on $X$ that contains $A_0$ and convergence to $p$.
\end{lemma}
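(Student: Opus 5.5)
We build an $\A$-filter from the traces on $X$ of the elementary neighborhoods of $p$, intersected with $A_0$, and then enlarge it to an $\A$-ultrafilter by Lemma \ref{A-ultrafilter}(1). Concretely, set
$$\mathcal B=\{A'\cap A_0 : A'\in\A',\ p\in A'\}.$$
Since $\A'$ is closed under finite intersections and $A'\cap X\in\A$ for every $A'\in\A'$ (as $\A=\A'|_X$), each member of $\mathcal B$ lies in $\A$. The family $\mathcal B$ is also closed under finite intersections.

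The key point is that no member of $\mathcal B$ is empty. If $A'\in\A'$ contains $p$, then $A'$ is open in $(T,\tau_{\A'})$, so $A'$ is a neighborhood of $p$. Since $p\in\text{cl}_T A_0$, we get $A'\cap A_0\neq\emptyset$. Thus $\mathcal B$ is a filter base in $\A$, and
$$\mathcal F=\{B\in\A : B\supseteq C \text{ for some } C\in\mathcal B\}$$
is an $\A$-filter on $X$. Taking $A'=T$ shows $A_0\in\mathcal F$. By Lemma \ref{A-ultrafilter}(1), $\mathcal F$ is contained in some $\A$-ultrafilter $\mathcal U$, and $A_0\in\mathcal U$.

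It remains to check that $\mathcal U$ converges to $p$. Let $V$ be any neighborhood of $p$ in $T$. Because $\A'$ is a base for $\tau_{\A'}$, there is $A'\in\A'$ with $p\in A'\subseteq V$. Then $A'\cap A_0\in\mathcal B\subseteq\mathcal U$ and $A'\cap A_0\subseteq V$. So every neighborhood of $p$ contains a member of $\mathcal U$, which is the required convergence.

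The only step needing care is the nonemptiness of $A'\cap A_0$. It uses two facts: elementary sets of $T$ are open, so they serve as neighborhoods of $p$; and $p$ lies in the closure of $A_0$. Density of $X$ in $T$ is not actually needed. We also use implicitly that the topology on $X$ is the subspace topology, which the paper notes for elementary subspaces.
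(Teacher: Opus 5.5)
Your proof is correct, but it takes a different route from the paper's. The paper does not build a filter base and then enlarge it. It works directly with the trace filter $\mathcal F=\{A'\cap X : A'\in\A',\ p\in A'\}$.

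\begin{itemize}
\item It gets $A_0\in\mathcal F$ by writing $A_0=A'_0\cap X$ with $A'_0\in\A'$ clopen, so that $p\in\text{cl}_TA_0\subseteq A'_0$.
\item It then shows $\mathcal F$ is already an $\A$-ultrafilter. For $A=A'\cap X\in\A$ we have $p\in\text{cl}_TX=\text{cl}_TA\cup\text{cl}_T(X\setminus A)$. Since $A'$ and $T\setminus A'$ are closed, $p\in\text{cl}_TA$ forces $p\in A'$, and $p\in\text{cl}_T(X\setminus A)$ forces $p\in T\setminus A'$.
\item Lemma \ref{A-ultrafilter}(2) then gives maximality.
\end{itemize}

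The two arguments use different properties of elementary sets. Yours uses only that they are open and form a base, and gets maximality abstractly from Lemma \ref{A-ultrafilter}(1). The paper's uses that they are also closed, which is what makes the neighbourhood-trace filter maximal on its own.

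What the paper gains is an explicit identification of the ultrafilter, $\{A'\cap X : p\in A'\}=\{A\in\A : p\in\text{cl}_TA\}$. This is precisely the formula for $\A^p$ stated in Corollary \ref{Extensions} and used repeatedly afterwards. Your $\mathcal U$ is produced by a choice and is not identified. It does coincide with the paper's filter, but seeing this needs the paper's maximality argument. Any $\A$-ultrafilter converging to $p$ contains, for each $A'\ni p$, a member of $\A$ inside $A'\cap X$, hence contains $A'\cap X$. So it contains that family, and equals it once the family is known to be an ultrafilter.

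Your remark that density of $X$ in $T$ is not needed is right. It applies equally to the paper's proof, which obtains $p\in\text{cl}_TX$ from $p\in\text{cl}_TA_0$.
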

\begin{proof}
Let $\mathcal F:=\{ A'\cap X :  A'\in\A' \text{ and } p\in  A'\}$. Since $p\in\text{cl}_TA_0$,  Lemma \ref{m-sets of extensions} implies that $A_0\in\mathcal F$. On the other hand,  $p\in\text{cl}_TX$ follows   that $\mathcal F$ is an $\A$-filter on $X$ and so it's enough to show that it is an $\A$-ultrafilter, as clearly $\mathcal F$ is convergent to $p$. To let this end, suppose that $A\in\A$, thus there exists an elementary set $ A'\in \A'$ such that $A= A'\cap X$, as  $X$ is an elementary subspace of   $T$. On the other hand,
$$p\in\text{cl}_TX=\text{cl}_T(A\cup (X\setminus A))=\text{cl}_TA\cup\text{cl}_T(X\setminus A),$$
  implies that $p\in \text{cl}_TA$ or $p\in\text{cl}_T(X\setminus A)$. If $p\in\text{cl}_TA$,  Then
$$p\in \text{cl}_TA=\text{cl}_T( A'\cap X)\subseteq A',$$
which it follows  that $A\in\mathcal F$. The other case implies that $X\setminus A\in\mathcal F$. This  shows that $\mathcal F$ is an $\A$-ultrafilter by Lemma \ref{A-ultrafilter}(2).
\end{proof}
\begin{corollary}\label{Extensions}
If  $(T, \A')$ is an  extension of an elementary space $(X, \A)$, then every point of $T$ is the limit of a unique  $\mathcal A$-ultrafilter 
$$\A^p=\{ A'\cap X :  A'\in\A' \text{ and } p\in  A' \}=\{A\in \A : p\in\text{cl}_TA\}, $$
 on $X$. Moreover, if $\A'$  is a $\sigma$-algebra, then  $\A^p$ is closed under countable intersection.
\end{corollary}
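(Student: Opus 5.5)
Since $(T,\A')$ is an extension of $(X,\A)$, the set $X$ is dense in $T$, so every $p\in T$ lies in $\text{cl}_T X$ and $X\in\A$. Applying Lemma \ref{A-ultrafilters on X} with $A_0=X$, I will take the $\A$-ultrafilter constructed in its proof,
$$\A^p=\{A'\cap X : A'\in\A' \text{ and } p\in A'\};$$
that lemma shows it is an $\A$-ultrafilter converging to $p$. The equality with $\{A\in\A : p\in \text{cl}_T A\}$ follows from Lemma \ref{m-sets of extensions}. For $A\in\A$ write $A=A'\cap X$ with $A'\in\A'$; density of $X$ and clopenness of $A'$ give $\text{cl}_T A=A'$. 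Hence $p\in A'$ exactly when $p\in\text{cl}_T A$. Conversely, any $A\in\A$ with $p\in\text{cl}_TA$ is $(\text{cl}_TA)\cap X$ with $\text{cl}_TA\in\A'$ containing $p$.

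For uniqueness, suppose $\mathcal G$ is another $\A$-ultrafilter on $X$ converging to $p$ with $\mathcal G\neq\A^p$. By Lemma \ref{A-ultrafilter}(3) there are disjoint $A_1\in\A^p$ and $A_2\in\mathcal G$. Since $\A^p$ is an ultrafilter, the equivalent formulation is that some $A\in\mathcal G$ satisfies $A\notin\A^p$, and then Lemma \ref{A-ultrafilter}(2) gives $X\setminus A\in\A^p$. So $p\in\text{cl}_T(X\setminus A)=\text{cl}_T A^c$, which is a clopen neighborhood of $p$ in $\A'$. Convergence of $\mathcal G$ provides $G\in\mathcal G$ with $G\subseteq \text{cl}_T(A^c)\cap X=A^c$. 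Then $G\cap A=\emptyset$ with both in $\mathcal G$, a contradiction. The one subtle point is that $\text{cl}_T(A^c)\cap X=A^c$, which holds because $A^c\in\A$ equals $B'\cap X$ for some $B'\in\A'$ with $\text{cl}_T A^c=B'$.

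For the $\sigma$-algebra statement, let $A_n=A_n'\cap X\in\A^p$ with $p\in A_n'\in\A'$. Then $\bigcap_n A_n'\in\A'$ contains $p$, and $\bigcap_n A_n=(\bigcap_n A_n')\cap X\in\A^p$ by the first description. No step looks hard. The main care point is the uniqueness argument, which relies on elementary sets of $\A'$ being clopen neighborhoods, so that convergence can be tested against them.
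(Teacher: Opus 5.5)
Your proof is correct and follows essentially the same route as the paper. Existence comes from Lemma \ref{A-ultrafilters on X}, whose proof constructs exactly this $\A^p$. Uniqueness comes from separating distinct $\A$-ultrafilters by disjoint (complementary) elementary sets, whose closures in $T$ are disjoint clopen sets, and the $\sigma$-algebra clause is the same direct intersection argument. The only difference is that you spell out two points the paper leaves implicit: the equality of the two descriptions of $\A^p$ via Lemma \ref{m-sets of extensions}, and why the clopen neighborhood $\text{cl}_T(A^c)$ of $p$ rules out a second ultrafilter converging to $p$.
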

\begin{proof}
 Since $X$ is  dense in $T$,  every point of $T$ is the limit of at least one $\mathcal A$-ultrafilter on $X$, by Lemma \ref{A-ultrafilters on X}. On the other hand, using Lemma \ref{A-ultrafilter}(3), distinct $\A$-ultrafilters on $X$ contains disjoint members of $\A$. Thus, distinct $\A$-ultrafilters on $X$ can not have a common limit point in $T$.  The last statement is evident.
\end{proof}

\begin{definition}\label{compact m-space}
An elementary space $(X, \A)$ is called compact if every $\A$-ultrafilter on $X$ is fixed.
\end{definition}
For an elementary space $(X, \A)$, since the algebra $\A$ is a base for the open (or closed) sets of the associated topological space, a straightforward argument yields the following lemma. Remember that a topological space $X$ is compact if and only if every filter consisting of closed sets is fixed, see \cite[Theorem 3.1.1]{G}.
\begin{lemma}\label{T-compactness}
If $(X, \A)$ is an elementary space, then the following are equivalent.
\begin{enumerate}
\item  The elementary space $(X, \A)$ is compact.
\item The topological  space $(X, \tau_{\A})$ is compact.
\item  Every $\A$-filter on $X$ is fixed.
\end{enumerate}
In this case, since $(X, \A)$ is reduced, then the intersection of members of any $\A$-ultrafilter is a singleton.
\end{lemma}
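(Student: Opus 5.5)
I will prove the cycle of implications $(1)\Rightarrow(3)\Rightarrow(2)\Rightarrow(1)$, and then handle the final sentence separately. All steps rest on two facts: $\A$ is a base of clopen sets for $\tau_{\A}$, and Lemma \ref{A-ultrafilter} lets us pass from filters to ultrafilters.

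For $(1)\Rightarrow(3)$, let $\mathcal F$ be an $\A$-filter. By Lemma \ref{A-ultrafilter}(1) it is contained in an $\A$-ultrafilter $\mathcal U$. By (1), $\mathcal U$ is fixed, so
$$\emptyset\ne\bigcap_{A\in\mathcal U}A\subseteq\bigcap_{A\in\mathcal F}A.$$
Hence $\mathcal F$ is fixed.

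For $(3)\Rightarrow(2)$, I will use the finite intersection property characterization of compactness, phrased with basic closed sets. Since every member of $\A$ is clopen and $\A$ is closed under complements, $\A$ is also a base for the closed sets of $\tau_{\A}$. So it suffices to show that every open cover by members of $\A$ has a finite subcover. Suppose $\{A_i\}_{i\in I}\subseteq\A$ covers $X$ but no finite subfamily does. Then every finite intersection of the sets $A_i^c\in\A$ is nonempty. The family of all $B\in\A$ containing some finite intersection $A_{i_1}^c\cap\dots\cap A_{i_n}^c$ is therefore an $\A$-filter. By (3) it is fixed, so some point lies in every $A_i^c$, contradicting that the $A_i$ cover $X$. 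This shows $(X,\tau_{\A})$ is compact.

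For $(2)\Rightarrow(1)$, an $\A$-ultrafilter is a family of closed sets with the finite intersection property, because it is closed under finite intersections and omits $\emptyset$. Compactness then gives a nonempty intersection, so the ultrafilter is fixed.

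For the final claim, let $\mathcal U$ be an $\A$-ultrafilter. Its intersection is nonempty by what was just proved. Suppose it contained two distinct points $x\ne y$. Reducedness gives some $A\in\A$ with $x\in A$ and $y\notin A$. By Lemma \ref{A-ultrafilter}(2), either $A\in\mathcal U$ or $A^c\in\mathcal U$. The first would exclude $y$ from the intersection and the second would exclude $x$, a contradiction. So the intersection is a singleton.

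None of these steps is a real obstacle. The only point needing care is in $(3)\Rightarrow(2)$: one must justify reducing compactness to covers by basic sets from $\A$. This is standard, since any open cover can be refined to a cover by basic sets, and a finite subcover of the refinement yields a finite subcover of the original.
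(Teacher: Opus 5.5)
Your proof is correct and takes the route the paper has in mind. The paper gives no detailed proof: it only notes that $\A$ is a base of clopen sets for $\tau_{\A}$ and cites the characterization of compactness by fixed filters of closed sets, and your cycle $(1)\Rightarrow(3)\Rightarrow(2)\Rightarrow(1)$ (with the reduction to covers by members of $\A$ in $(3)\Rightarrow(2)$), together with the singleton argument via reducedness and Lemma \ref{A-ultrafilter}(2), is exactly that straightforward argument written out.
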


The following  theorem discusses  extending of elementary homomorphisms.
\begin{theorem}\label{m-homo extension}
Let $(T, \A')$ be an  extension of an elementary space $(X, \A)$. Then every elementary homomorphism $\varphi $ from $X$ into any reduced compact elementary space  $(Y, \tilde\A)$  extends to an elementary homomorphism  $\bar \varphi $ on $T$.
\end{theorem}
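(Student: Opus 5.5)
We extend $\varphi$ through ultrafilters. For $p\in T$, Corollary \ref{Extensions} gives the unique $\A$-ultrafilter $\A^p=\{A\in\A : p\in \text{cl}_TA\}$ converging to $p$. We push it forward: put
$$\varphi_*(\A^p)=\{B\in\tilde\A : \varphi^{-1}(B)\in\A^p\}.$$
Since $\varphi$ is an elementary homomorphism, $\varphi^{-1}(B)\in\A$ for every $B\in\tilde\A$, and $\varphi^{-1}$ preserves finite intersections, complements and inclusions. Moreover $\varphi^{-1}(\emptyset)=\emptyset\notin\A^p$. Hence $\varphi_*(\A^p)$ is an $\tilde\A$-filter on $Y$. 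It is in fact an $\tilde\A$-ultrafilter by Lemma \ref{A-ultrafilter}(2): for $B\in\tilde\A$, either $\varphi^{-1}(B)\in\A^p$ or $\varphi^{-1}(B^c)=(\varphi^{-1}(B))^c\in\A^p$.

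Because $(Y,\tilde\A)$ is compact and reduced, Lemma \ref{T-compactness} shows that $\bigcap\varphi_*(\A^p)$ is a singleton $\{y_p\}$. We define $\bar\varphi(p)=y_p$. For $p=x\in X$ we have $\A^x=\{A\in\A : x\in A\}$, since each $A\in\A$ is clopen in $T$ restricted to $X$. Then $B\in\varphi_*(\A^x)$ exactly when $\varphi(x)\in B$, so $\varphi(x)$ lies in the intersection and $\bar\varphi(x)=\varphi(x)$. Thus $\bar\varphi$ extends $\varphi$.

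The main step is to show that $\bar\varphi$ is an elementary homomorphism. Fix $B\in\tilde\A$ and set $A=\varphi^{-1}(B)\in\A$. We claim
$$\bar\varphi^{-1}(B)=\text{cl}_TA,$$
which lies in $\A'$ by Lemma \ref{m-sets of extensions}. First suppose $p\in\text{cl}_TA$. Then $A\in\A^p$, so $B\in\varphi_*(\A^p)$ and $y_p\in B$. Conversely, suppose $p\notin\text{cl}_TA$. Since $p\in\text{cl}_TX=\text{cl}_TA\cup\text{cl}_T(X\setminus A)$, we get $X\setminus A=\varphi^{-1}(B^c)\in\A^p$. Hence $B^c\in\varphi_*(\A^p)$ and $y_p\in B^c$, so $p\notin\bar\varphi^{-1}(B)$.

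The only delicate point is that $\A^p$ is exactly the ultrafilter described in Corollary \ref{Extensions}, with the identification $\{A : p\in\text{cl}_TA\}$. That identification is what makes the preimage computation go through. Everything else is routine manipulation of preimages under $\varphi$.
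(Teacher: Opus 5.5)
Your proof is correct and follows essentially the same route as the paper. Both push $\A^p$ forward to an $\tilde\A$-ultrafilter on $Y$, take the unique point of its intersection as $\bar\varphi(p)$, and show $\bar\varphi^{-1}(B)=\text{cl}_T\varphi^{-1}(B)\in\A'$ via Corollary \ref{Extensions} and Lemma \ref{m-sets of extensions}; your two-sided check of that preimage identity just spells out a step the paper attributes to ``the definition of $\bar\varphi$.''
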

\begin{proof}
First, to define $\bar\varphi$ on $T$, let $p\in T$ and $\A^p$ be the unique $\A$-ultrafilter on $X$ converges to $p$, which obtained by Corollary \ref{Extensions}. Define 
$$\varphi^{\#}\A^p=\{\tilde A\in \tilde\A : \varphi^{-1}(\tilde A)\in\A^p \}.$$
It is clear that $\varphi^{\#}\A^p$ is an $\A$-ultrafilter (on $Y$)  by Lemma \ref{A-ultrafilter}(2). Now, using Lemma \ref{T-compactness}, $\bigcap \varphi^{\#}\A^p$ is a singleton. Let $\bar\varphi(p)$ be this single point, i.e., $\bigcap \varphi^{\#}\A^p=\{\bar\varphi(p)\}$.

Next, to see that $\bar\varphi$ is an extension of $\varphi$, note that for any $x\in X$,
$$\varphi^{\#}\A_x=\{\tilde A\in \tilde\A : \varphi^{-1}(\tilde A)\in\A_x\}=\{\tilde A\in \tilde\A : \varphi(x)\in \tilde A\}=:\tilde\A_{\varphi(x)}.$$
Clearly, $\bigcap\tilde\A_{\varphi(x)}=\{\varphi(x)\}$, which means $\bar\varphi(x)=\varphi(x)$.

Finally, we show that $\bar\varphi$ is an elementary homomorphism. Using the definition of $\bar\varphi$, for any $\tilde A\in \tilde\A$ we have 
$$\bar\varphi^{-1}(\tilde A)=\{ p\in T : \bar\varphi(p)\in \tilde A \}=\{ p\in T : \varphi^{-1}(\tilde A)\in \A^p \}=\text{cl}_T\varphi^{-1}(\tilde A),$$
 by  Corollary \ref{Extensions}. Also, the last term belongs to $ \A'$, by Lemma \ref{m-sets of extensions},  which means that $\bar\varphi$ is an elementary homomorphism.
\end{proof}
The following  theorem discuss  extending of $\A$-measurable mappings on an elementary space.
\begin{theorem}\label{Extension into compact}
Let  $(T, \A')$  be an  extension of an elementary space $(X, \A)$. Then every $\A$-measurable mapping $\varphi$ from $X$ into any  compact topological space $Y$, extends to a continuous mapping $\bar \varphi$ on $(T, \tau_{\A'})$. Moreover, if $\A'$ is a $\sigma$-algebra and $Y$ is second countable, then the extended mapping $\bar\varphi$  is $\A'$-measurable.
\end{theorem}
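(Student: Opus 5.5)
We mimic the proof of Theorem \ref{m-homo extension}, replacing the algebra on $Y$ by its topology. Fix $p\in T$ and let $\A^p$ be the unique $\A$-ultrafilter on $X$ converging to $p$, given by Corollary \ref{Extensions}. Consider the family
$$\mathcal F_p=\{\text{cl}_Y\varphi(A) : A\in\A^p\}$$
of closed subsets of $Y$. It has the finite intersection property, since $\A^p$ is closed under finite intersections and contains no empty set. Compactness of $Y$ therefore gives a point in $\bigcap\mathcal F_p$. We show this point is unique and call it $\bar\varphi(p)$.

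For uniqueness, suppose $y\ne z$ both lie in $\bigcap\mathcal F_p$. Since $Y$ is Tychonoff, choose open sets $U\ni y$ and $V\ni z$ with $\text{cl}_YU\cap\text{cl}_YV=\emptyset$. The sets $\varphi^{-1}(U)$ and $\varphi^{-1}(\text{int}(Y\setminus U))$ lie in $\A$ by $\A$-measurability. By Lemma \ref{A-ultrafilter}(2), either $\varphi^{-1}(U)\in\A^p$ or its complement $\varphi^{-1}(Y\setminus U)\in\A^p$.
\begin{itemize}
\item In the second case $\text{cl}_Y\varphi(\varphi^{-1}(Y\setminus U))\subseteq Y\setminus U$, so $y\notin\bigcap\mathcal F_p$, a contradiction.
\item Hence $\varphi^{-1}(U)\in\A^p$. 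Then $\text{cl}_Y\varphi(\varphi^{-1}(U))\subseteq\text{cl}_YU$, which misses $z$, again a contradiction.
\end{itemize}
So $\bigcap\mathcal F_p$ is a singleton. For $x\in X$, the ultrafilter is $\A^x=\A_x$, consisting of the sets containing $x$. Every member of $\mathcal F_x$ then contains $\varphi(x)$, so $\bar\varphi(x)=\varphi(x)$ and $\bar\varphi$ extends $\varphi$.

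The key claim for continuity is: for every open $W\ni\bar\varphi(p)$ there is $A\in\A^p$ with $\varphi(A)\subseteq W$. To prove it, use regularity of $Y$ to take an open $U$ with $\bar\varphi(p)\in U\subseteq\text{cl}_YU\subseteq W$. The compact set $Y\setminus U$ is disjoint from $\bigcap\mathcal F_p$, so finitely many members of $\mathcal F_p$ already miss it. The corresponding finite intersection $A\in\A^p$ then satisfies $\varphi(A)\subseteq\text{cl}_Y\varphi(A)\subseteq U\subseteq W$.

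Continuity follows. By Lemma \ref{m-sets of extensions}, $\text{cl}_TA\in\A'$ is a $\tau_{\A'}$-neighborhood of $p$, because $A\in\A^p$ means exactly $p\in\text{cl}_TA$. For any $q\in\text{cl}_TA$ we have $A\in\A^q$, so $\bar\varphi(q)\in\text{cl}_Y\varphi(A)\subseteq\text{cl}_YU\subseteq W$. Thus $\bar\varphi(\text{cl}_TA)\subseteq W$, and $\bar\varphi$ is continuous. This neighborhood argument, handled through finite subcovers, is the step needing most care.

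For the second statement, let $\{B_n\}$ be a countable base of $Y$. We aim to show $\bar\varphi^{-1}(V)\in\A'$ for every open $V$. Since $V=\bigcup_{n\in N}B_n$ for some countable $N$ and $\A'$ is a $\sigma$-algebra, it suffices to prove $\bar\varphi^{-1}(V)$ is a countable union of members of $\A'$.

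The plan is to write $\bar\varphi^{-1}(V)$ as the union of the sets $\text{cl}_T\varphi^{-1}(B)$ over basic $B$ with $\text{cl}_YB\subseteq V$. This uses the refined base $\mathcal B'=\{B_n : \text{cl}_YB_n\subseteq V\}$, which still covers $V$ by regularity.
\begin{itemize}
\item One inclusion: if $p\in\text{cl}_T\varphi^{-1}(B)$ then $\varphi^{-1}(B)\in\A^p$, so $\bar\varphi(p)\in\text{cl}_YB\subseteq V$.
\item The other inclusion: for $\bar\varphi(p)\in V$, pick $B\in\mathcal B'$ containing $\bar\varphi(p)$. By the key claim some $A\in\A^p$ has $\varphi(A)\subseteq B$, hence $A\subseteq\varphi^{-1}(B)$ and $p\in\text{cl}_TA\subseteq\text{cl}_T\varphi^{-1}(B)$.
\end{itemize}
Each $\text{cl}_T\varphi^{-1}(B)$ lies in $\A'$ by Lemma \ref{m-sets of extensions}, and the union is countable, so $\bar\varphi^{-1}(V)\in\A'$ and $\bar\varphi$ is $\A'$-measurable.
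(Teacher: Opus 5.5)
Your proof is correct, but it differs from the paper's at each of the three stages.

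\emph{Existence and uniqueness.} The paper defines $\bar\varphi(p)$ through the image $z$-filter $\varphi^{\#}\A^p=\{Z\in Z(Y):\varphi^{-1}(Z)\in\A^p\}$. It notes that this is a prime $z$-filter and cites Gillman--Jerison: in a compact space such a $z$-filter converges to a unique point. You instead use the closed family $\{\text{cl}_Y\varphi(A):A\in\A^p\}$. You get existence from the finite intersection property, and uniqueness from Hausdorff separation combined with the ultrafilter dichotomy $\varphi^{-1}(U)\in\A^p$ or $\varphi^{-1}(Y\setminus U)\in\A^p$.

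\emph{Continuity.} The paper uses primeness. It picks zero-sets with $\bar\varphi(p)\in Y\setminus\tilde Z_1\subseteq\tilde Z_0\subseteq U$, so that $\tilde Z_0\in\varphi^{\#}\A^p$, and takes $\text{cl}_T\varphi^{-1}(\tilde Z_0)$ as the neighbourhood of $p$. You use a finite-subfamily argument on the compact set $Y\setminus U$. Both end with the same kind of neighbourhood, $\text{cl}_T A$ for some $A\in\A^p$.

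\emph{Measurability.} The paper invokes Urysohn metrization. It writes each ball as an increasing union of smaller balls whose closures stay inside, then checks two inclusions for $\bigcup_n A'_n$. You avoid metrizability and obtain the explicit formula $\bar\varphi^{-1}(V)=\bigcup\{\text{cl}_T\varphi^{-1}(B): B\ \text{basic},\ \text{cl}_YB\subseteq V\}$. This uses only regularity and a countable base, and is arguably cleaner.

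Your version is self-contained and needs no $z$-filter theory. What the paper's formulation buys is reuse. In Proposition~\ref{Extension into realcompact}, the same object $\varphi^{\#}\A^p$ (formed in $\beta Y$) inherits the countable intersection property from $\A^p$. Gillman--Jerison's Lemma 8.12 then forces it to be fixed in $Y$. Your family of closures does not feed into that lemma directly, so at that stage you would have to introduce the $z$-filter anyway.
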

\begin{proof}
First, to define $\bar\varphi$ on $T$, let $p\in T$ and $\A^p$ be the unique $\A$-ultrafilter on $X$ converges to $p$, which obtained by Corollary \ref{Extensions}. Define
$$\varphi^{\#}\A^p=\{ Z\in Z(Y) : \varphi^{-1}(Z)\in\A^p \}.$$
Note that, since $\varphi$ is $\A$-measurable, $\varphi^{-1}(Z)\in\A$, for any $Z\in Z(Y)$.
It is easy to see that $\varphi^{\#}\A^p$  is a prime $z$-filter on $Y$. It follows then, $\varphi^{\#}\A^p$ has a limit point in $Y$, by \cite[Theorems 3.17 and 4.11]{GJ}. Take this limit point as $\bar\varphi(p)$, i.e., 
$$\bigcap \varphi^{\#}\A^p=\{\bar\varphi(p)\}.$$
This defines a mapping $\bar \varphi$ from $T$ into $Y$. Whenever $p\in X$ then, $Z\in \varphi^{\#}\A^p$  if and only if $p\in \varphi^{-1}(Z)$ or equivalently $\varphi(p)\in Z$ and this implies that $\varphi(p)\in\bigcap \varphi^{\#}\A^p$, i.e., $\bar\varphi(p)=\varphi(p)$. This shows that $\bar \varphi$ is an extension of $\varphi$.

 Next, to show that $\bar\varphi$ is continuous at $p\in T$, suppose that $ U$ is a neighborhood of $\bar\varphi(p)$ in $Y$. Since $Y$ is completely regular, there are  zero-sets $\tilde Z_0, \tilde Z_1\in Z(Y)$  such that $\bar\varphi(p)\in Y\setminus \tilde Z_1 \subseteq\tilde Z_0\subseteq U$, see \cite[Theorem 3.2 and 3.2(b)]{GJ}. This  implies that $\tilde Z_1\notin \varphi^{\#}\A^p$, and since $\varphi^{\#}\A^p$ is prime and $\tilde Z_0 \cup\tilde Z_1=Y$, we conclude that $\tilde Z_0\in\varphi^{\#}\A^p$. Therefore $\varphi^{-1}(\tilde Z_0)\in \A^p$ and since $X$ is an elementary subspace of $(T, \A')$, we may choose an elementary set  $A'_0\in \A'$ such that $\varphi^{-1}(\tilde Z_0)=A'_0\cap X$. To complete the proof, it is enough to show that $\bar\varphi(A'_0)\subseteq U$,  as $A'_0$ is a neighborhood of $p$. Let $q\in A'_0$. Thus $A'_0\cap X\in\A^q$, by Corollary \ref{Extensions}, and hence $\tilde Z_0 \in\varphi^{\#}\A^q$. It follows then $\bar\varphi(q)\in \tilde Z_0\subseteq U$ and we are done.

Finally,  for the last part of the theorem,  suppose that  $\A'$ is a $\sigma$-algebra and  $Y$ is second countable, then $Y$  is metrizable by \cite[Theorem 4.2.8]{E}. Thus, every open subset $U\subseteq Y$ is a countable union of open balls, as $Y$ is second countable; see also \cite[Theorem 1.1.15]{E}. Using this, it is enough to show that $\bar\varphi^{-1}(B(y, r))\in\A'$, for any  open ball $B(y, r)$. If $U=B(y_0, r_0)$ is an open ball  with center at some point $y_0\in Y$ and radius $r_0$, we may write
$$U=\bigcup_{n=1}^{\infty}B(y_0, r_n)=\bigcup_{n=1}^{\infty}\overline{B(y_0, r_n)},$$
where $r_n$ is an ascending sequence of positive real numbers converges to $r_0$ and in here  $\overline{B(y_0, r_n)}$ is the closure of $B(y_0, r_n)$ in $Y$. Now, since $\varphi$ is  $\A$-measurable and $(X, \A)$ is an elementary subspace of $(T, \A')$, there is a measurable set $A'_n\in\A'$ such that $\varphi^{-1}(B(y_0, r_n))=A'_n\cap X$, for every $n\ge1$. Using the continuty of $\bar\varphi$, it follows that
$$\bar\varphi(A'_n)=\bar\varphi(\text{cl}_T(A'_n\cap X))\subseteq\text{cl}_Y\varphi(A'_n\cap X)\subseteq\overline{B(y_0, r_n)}.$$
 Therefore $A'_0=\bigcup_{n=1}^{\infty}A'_n\subseteq\bar\varphi^{-1}(U)$ and $A'_0\in\A'$. For the reverse inclusion, note that 
$$\varphi^{-1}(U)=\varphi^{-1}(\bigcup_{n=1}^{\infty}{B(y_0, r_n)})=\bigcup_{n=1}^{\infty}A'_n\cap X=A'_0\cap X,$$
 which implies that $\varphi(X\setminus A'_0)\subseteq Y\setminus U$. Since $\text{cl}_T(X\setminus A'_0)=T\setminus A'_0$, so  by  the  continuty of $\bar\varphi$ we conclude that $\bar\varphi(T\setminus A'_0)\subseteq Y\setminus U$ which means $\bar\varphi^{-1}(U)\subseteq A'_0$, and the proof is complete.
\end{proof}
Now we introduce the maximum elementary extension of a given (reduced) elementary space $(X, \A)$.
 Suppose that $S(\A)$ is the family of all $\A$-ultrafilters (on $X$) and  $\beta_{\A}X$ is an index set for this family, where for each point $p\in \beta_{\A}X$ there corresponds exactly one $\A$-ultrafilter, denoted by $\A^p$, in $S(\A)$ and vice-versa. Then,  by \cite[Proposition 2.9]{W1}, the family $\bar\A:=\{\bar A : A \in \A \}$, where $\bar A=\{p\in \beta_{\A} X : A\in\mathcal A^p \}$, constitutes a reduced algebra isomorphic to $\A$ (as a Boolean algebra) in the set $\beta_{\A}X$.  
Using the proof of the Stone Representation Theorem (\cite[Theorem 2.10]{W1}), the topology generated by $\bar\A$ turns $\beta_{\A}X$ into a compact totally disconnected (zero-dimensional) Hausdorff space. This reduced compact  elementary space $(\beta_{\A}X, \bar\A)$, contains $X$ in a natural way. In fact, for each point $x\in X$,  the subfamily $\A_x=\{ A\in \A : x\in A\}$ is an (fixed) $\A$-ultrafilter on $X$, by Lemma \ref{A-ultrafilter}. Now, if we denote the set of all fixed $\A$-ultrafilters on $X$ by $\mathcal X$, it is easy to see that the mapping $x\to \A_x$ is an elementary isomorphism between $(X, \A)$ and $(\mathcal X, \bar\A|_{\mathcal X})$. Thus we may consider $X$ as an elementary subspace of $\beta_{\A}X$ and clearly $\bar A\cap X=A$, for any $A\in\A$. Using   this,  every $\A$-ultrafilter $\A^p$ on $X$, converges to $p\in\beta_{\A}X$ and in particular      $\bar A=\text{cl}_{\beta_{\A}X}A$, for any $A\in\A$. This  shows  that, $(\beta_{\A}X, \bar\A)$ is a compact   elementary extension of $(X, \A)$; see also \cite[\S2]{ABS}.
\begin{corollary}\label{beta X}
Every (reduced) elementary space $(X, \A)$ has  a reduced compact    extension $(\beta_{\A}X, \bar\A)$, which has the following properties.
\begin{enumerate}
\item Every elementary homomorphism from $X$ into any reduced compact elementary space $Y$ extends to an elementary homomorphism  on $\beta_{\A}X$.
 \item Every $\A$-measurable mapping $\varphi$ from $X$ into any compact topological space $Y$, has a continuous extension $\bar\varphi$ on $\beta_{\A}X$.
\item Every  extension $(T, \A')$ of $X$, is an elementary subspace of $\beta_{\A}X$, i.e., $X\subseteq T\subseteq \beta_{\A}X$, which means $(\beta_{\A}X, \bar\A)$ is the maximum  extension of $X$.
\item If $(T, \A')$ is an elementary extension of $X$, then $\beta_{\A'}T=\beta_{\A}X$.
\item For any two elementary sets $A, B\in \A$, we have
$$\text{cl}_{\beta_{\A}X}(A\cap B)=\text{cl}_{\beta_{\A}X} A\cap \text{cl}_{\beta_{\A}X} B.$$
\end{enumerate}
\end{corollary}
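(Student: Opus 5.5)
Parts (1) and (2) are immediate specializations of Theorem \ref{m-homo extension} and Theorem \ref{Extension into compact}. By the discussion preceding the corollary, $(\beta_{\A}X,\bar\A)$ is an elementary extension of $(X,\A)$: $X$ is dense, since every $\bar A\neq\emptyset$ meets $X$ in $A$, and $\bar A\cap X=A$. So we simply take $T=\beta_{\A}X$ in those two theorems. Part (5) is also short. By Lemma \ref{m-sets of extensions} we have $\text{cl}_{\beta_{\A}X}A=\bar A=\{p: A\in\A^p\}$. Since each $\A^p$ is a filter, $A\cap B\in\A^p$ if and only if $A\in\A^p$ and $B\in\A^p$. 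Hence $\overline{A\cap B}=\bar A\cap\bar B$.

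For (3), let $(T,\A')$ be an extension of $X$. Corollary \ref{Extensions} attaches to each $p\in T$ a unique $\A$-ultrafilter $\A^p=\{A\in\A: p\in\text{cl}_TA\}$, and we define $\Phi:T\to\beta_{\A}X$ by sending $p$ to the point of $\beta_{\A}X$ indexed by $\A^p$. For $x\in X$ this ultrafilter is $\A_x$, so $\Phi$ restricts to the identity on $X$. $\Phi$ is injective: if $p\neq q$, reducedness of $\A'$ gives some $A'\in\A'$ with $p\in A'$ and $q\notin A'$. Then $A=A'\cap X$ lies in $\A^p$, while its complement, with closure $T\setminus A'$, lies in $\A^q$. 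For elementary sets, $\Phi^{-1}(\bar A)=\{p: A\in\A^p\}=\text{cl}_TA$, and $\Phi(\text{cl}_TA)=\bar A\cap\Phi(T)$. Combined with Lemma \ref{m-sets of extensions}, this shows that $\Phi$ is an elementary isomorphism of $(T,\A')$ onto $(\Phi(T),\bar\A|_{\Phi(T)})$ fixing $X$. This is the precise sense of $X\subseteq T\subseteq\beta_{\A}X$. Alternatively, $\Phi$ is the extension from (1) of the inclusion $X\hookrightarrow\beta_{\A}X$, and the computation above upgrades it to an embedding.

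For (4), identify $T$ with $\Phi(T)$ as in (3). The restriction map $A'\mapsto A'\cap X$ is a Boolean isomorphism $\A'\to\A$, with inverse $A\mapsto\text{cl}_TA$ by Lemma \ref{m-sets of extensions}. It therefore induces a bijection between $\A'$-ultrafilters on $T$ and $\A$-ultrafilters on $X$. The basic sets $\bar{A'}$ and $\bar A$ correspond under this bijection, so $(\beta_{\A'}T,\bar{\A'})$ and $(\beta_{\A}X,\bar\A)$ are elementarily isomorphic. The isomorphism is compatible with the embeddings of $X$ and $T$: an $\A'$-ultrafilter fixed at $t\in T$ corresponds to $\A^t$.

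The main obstacle is (3), specifically checking that $\Phi$ is not only injective and elementary but an isomorphism onto its image with the induced algebra. The key identity is $\Phi^{-1}(\bar A)=\text{cl}_TA$ from Corollary \ref{Extensions}, and I would verify it carefully first.
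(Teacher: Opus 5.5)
Your proof is correct. For parts (1), (2), (3) and (5) it follows the paper's proof:
\begin{itemize}
\item (1) and (2) are Theorems \ref{m-homo extension} and \ref{Extension into compact} applied with $T=\beta_{\A}X$.
\item (3) uses the same map $p\mapsto\A^p$ and the same identity $\Phi(\text{cl}_TA)=\bar A\cap\Phi(T)$.
\item (5) is the same filter argument.
\end{itemize}

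Two differences are worth noting.

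In (3) you derive injectivity explicitly from the reducedness of $\A'$. The paper only cites Corollary \ref{Extensions}, which by itself says that each point of $T$ has a unique $\A$-ultrafilter converging to it. It does not say that distinct points of $T$ give distinct ultrafilters, so your argument fills a small gap.

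In (4) the routes genuinely differ. The paper applies (3) twice: to the extension $\beta_{\A'}T$ of $X$, and to the extension $\beta_{\A}X$ of $T$. It then concludes equality from the mutual containment. It leaves implicit why the embeddings are onto; for example, the image of the compact space $\beta_{\A'}T$ is closed in $\beta_{\A}X$ and contains the dense set $X$. You instead note that $A'\mapsto A'\cap X$ is a Boolean isomorphism $\A'\to\A$, with inverse $A\mapsto\text{cl}_TA$, and transport the Stone construction along it. This is the standard fact that isomorphic Boolean algebras have isomorphic Stone spaces.

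Your version is self-contained and gives an explicit elementary isomorphism $\beta_{\A'}T\to\beta_{\A}X$ that fixes $T$ pointwise. The paper's version is shorter once (3) is available, but it relies on that unstated surjectivity step.
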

\begin{proof}
\begin{enumerate}
\item This is due to Theorem \ref{m-homo extension}.
\item This is a consequence of Theorem \ref{Extension into compact}.
\item We define an elementary homomorphism $\varphi : (T, \A')\to (\beta_{\A}X, \bar\A)$ which is an isomorphism between $(T, \A')$  and $(\varphi(T), \bar\A|_{\varphi(T)})$. Let $p\in T$ and 
$$\mathcal F^p=\{ A'\cap X :  A'\in\A' \text{ and } p\in  A' \}, $$ 
be the unique $\A$-ultrafilter obtained by Corollary \ref{Extensions}. Using  the argument before this result, $\mathcal F^p$ has  a unique convergent point in $\beta_{\A}X$ and we let   $\varphi(p)$ to be this unique point.
 Using Corollary \ref{Extensions}, the mapping $\varphi$ just defined, is a monomorphism from $T$ into $\beta_{\A}X$. Now, if $A'\in\A'$, then $A'\cap X=A\in \A$ and  by  the argument preceding the corollary, $\bar{A}=\text{cl}_{\beta_{\A}X}A=\{q\in\beta_{\A}X : A\in\A^q\}$. Thus, another application of Corollary \ref{Extensions}, implies that $p\in A'$ if and only if $A\in\mathcal F^p$ and this is equivalent to saying that $\varphi(p)\in \text{cl}_{\beta_{\A}X}A$, i.e.
$\varphi(A')= \bar{A}\cap \varphi(T)$. Hence $T$ and $\varphi(T)$ are elementary isomorphic and hence
$(\beta_{\A}X, \bar\A)$ is an  extension of $T$.
\item Since $\beta_{\A'}T$ is an  elementary extension of $T$ (and hence  an  extension of $X$), and $\beta_{\A}X$ is an extension of $T$, the result follows from (3). 
\item Using the argument preceding the corollary, $p\in \text{cl}_{\beta_{\A}X} A\cap \text{cl}_{\beta_{\A}X} B$ implies that $A , B\in\A^p$. Now, since $\A^p$ is a $\A$-filter, we conclude that $A\cap B\in \A^p$, i.e., $p\in\text{cl}_{\beta_{\A}X}(A\cap B)$.
\end{enumerate}\end{proof}
For a given (reduced) elementary space $(X, \A)$, the  space $( \beta_{\A}X, \bar \A)$  is called the {\it Stone extension} of $(X, \A)$, which  is the only compact elementary extension of $X$. Also, whenever  $\varphi$ is a mapping from $X$ into a compact topological space $Y$, its extension $\bar\varphi$ is called  the {\it Stone extension} of $\varphi$.

Using the duality between $\A$-filters on an elementary space $(X, \A)$ and   ideals of $M(X, \A)$ and the argument before Corollary \ref{beta X}, we deduce the following characterization of maximal ideals of $M(X, \A)$, which is the counterpart of Gelfand-Kolomogroff theorem in this framework.
\begin{corollary}\label{G-K theorem}
For a given elementary space $(X, \A)$, maximal ideals of $M(X, \A)$ are of the form
$$M^p=\{f\in M(X, \A) : p\in\text{cl}_{\beta_{\A}X}Z(f) \},$$
for an unique point $p\in\beta_{\A}X$.
\end{corollary}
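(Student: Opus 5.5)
The plan is to pass through the correspondence between ideals of $M(X, \A)$ and $\A$-filters on $X$. For an ideal $I$, put $Z[I]=\{Z(f) : f\in I\}$. For an $\A$-filter $\mathcal F$, put $Z^{-1}[\mathcal F]=\{f\in M(X,\A) : Z(f)\in\mathcal F\}$. The first step is to check that $Z[I]$ is an $\A$-filter and $Z^{-1}[\mathcal F]$ is an ideal.

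\begin{enumerate}
\item Since $\A=Z_{\A}[X]$, every $Z(f)$ with $f\in M(X,\A)$ lies in $\A$.
\item By Lemma \ref{units}, $I$ contains no $f$ with $Z(f)=\emptyset$, so $\emptyset\notin Z[I]$.
\item Closure under intersection follows from $Z(f^2+g^2)=Z(f)\cap Z(g)$.
\item Closure under supersets: if $Z(f)\subseteq A\in\A$, then $A=Z(f(1-\chi_A))$ with $f(1-\chi_A)\in I$.
\item Conversely, $Z^{-1}[\mathcal F]$ is an ideal. It is closed under sums since $Z(f+g)\supseteq Z(f)\cap Z(g)$, closed under multiples since $Z(hf)\supseteq Z(f)$, and proper since $Z(1)=\emptyset\notin\mathcal F$.
\end{enumerate}

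Both maps preserve inclusion. We have $Z[Z^{-1}[\mathcal F]]=\mathcal F$, because $A=Z(1-\chi_A)$ for every $A\in\A$. We also have $I\subseteq Z^{-1}[Z[I]]$.

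The key step is to show that if $M$ is a maximal ideal, then $Z[M]$ is an $\A$-ultrafilter and $M=Z^{-1}[Z[M]]$. Suppose $\mathcal G\supseteq Z[M]$ is an $\A$-ultrafilter. Then
$$M\subseteq Z^{-1}[Z[M]]\subseteq Z^{-1}[\mathcal G],$$
and the right side is a proper ideal. Maximality of $M$ forces equality throughout, so $Z[M]=Z[Z^{-1}[\mathcal G]]=\mathcal G$. Conversely, for an $\A$-ultrafilter $\mathcal U$, the ideal $Z^{-1}[\mathcal U]$ is maximal. Indeed, if $J\supseteq Z^{-1}[\mathcal U]$ is an ideal, then $Z[J]$ is an $\A$-filter containing $\mathcal U$, so $Z[J]=\mathcal U$. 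Hence every $g\in J$ has $Z(g)\in\mathcal U$, and $J\subseteq Z^{-1}[\mathcal U]$. Thus $M\mapsto Z[M]$ is a bijection from the maximal ideals onto $S(\A)$.

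To finish, I would use the construction of $\beta_{\A}X$. Every $\A$-ultrafilter is $\A^p$ for exactly one index $p\in\beta_{\A}X$. Also $A\in\A^p$ if and only if $p\in\bar A=\text{cl}_{\beta_{\A}X}A$. Hence a maximal ideal $M$ equals
$$Z^{-1}[\A^p]=\{f : Z(f)\in\A^p\}=\{f : p\in\text{cl}_{\beta_{\A}X}Z(f)\}=M^p.$$
Uniqueness of $p$ follows because distinct points index distinct ultrafilters. Lemma \ref{A-ultrafilter}(3) then gives disjoint $A_1\in\A^p$ and $A_2\in\A^q$. In that case $1-\chi_{A_1}\in M^p\setminus M^q$, so $M^p\neq M^q$.

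The main obstacle is the maximality step, specifically getting $M=Z^{-1}[Z[M]]$. That identity rests on $Z[Z^{-1}[\mathcal F]]=\mathcal F$, which needs the fact that every elementary set is a zero set. The rest is bookkeeping.
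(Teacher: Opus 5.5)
Your proposal is correct and follows essentially the same route as the paper. The paper obtains the corollary directly from two ingredients: the duality between ideals of $M(X,\A)$ and $\A$-filters on $X$ (resting on $\A=Z_{\A}[X]$), and the identity $\text{cl}_{\beta_{\A}X}A=\{p\in\beta_{\A}X : A\in\A^p\}$ from the construction of $\beta_{\A}X$. The only difference is that you verify the ideal/filter correspondence and the maximal-ideal/ultrafilter bijection explicitly, where the paper simply invokes them.
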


 Using Corollary \ref{beta X},  every $f\in M^*(X, \A)$ has a (unique) continuous extension, that we shall denote  by, $f^{\beta} \in C(\beta_{\A}X)$. Since $X$ is dense in $(\beta_{\A}X, \tau_{\bar\A})$, the mapping $f\to f^{\beta}$ is an isomorphism of $M^*(X, \A)$ into $C(\beta_{\A}X)$. In fact, by the following result, this isomorphism is onto, if $\A$ is a $\sigma$-algebra in $X$. First note that,  $M^*(X, \A)$ is a Banach algebra, by \cite[Proposition 3.1]{YEMS}, under the   uniform norm topology, i.e., $\|f\|=\text{sup}\{ |f(x)| : x\in X\}$, for every $f\in M^*(X, \A)$.

\begin{proposition}\label{M^*=C(beta)}
For any measurable space $(X, \A)$, the mapping $f\to f^{\beta}$ is a homeomorphism between $M^*(X, \A)$ and $C(\beta_{\A}X)$, as Banach algebras.
\end{proposition}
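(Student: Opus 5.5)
We need to show that the map $f\mapsto f^{\beta}$ is a surjective isometric algebra isomorphism from $M^*(X,\A)$ onto $C(\beta_{\A}X)$. By the discussion preceding the statement it is already an injective ring homomorphism; this uses that $X$ is dense and extensions are unique. So three things remain: that it preserves norms, that it is onto, and hence that it is a homeomorphism.

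\emph{Isometry.} For $f\in M^*(X,\A)$, density of $X$ in $\beta_{\A}X$ and continuity of $f^{\beta}$ give $f^{\beta}(\beta_{\A}X)\subseteq \text{cl}_{\mathbb R} f(X)$. Hence $\|f^{\beta}\|=\|f\|$. So the map is an isometric isomorphism onto its image, and in particular a homeomorphism onto its image. Since $M^*(X,\A)$ is a Banach algebra, the image is complete and therefore closed in $C(\beta_{\A}X)$.

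\emph{Surjectivity.} I will show that the image contains every $\chi_{\bar A}$ with $A\in\A$. Each $\chi_A$ lies in $M^*(X,\A)$, and $\chi_{\bar A}$ is continuous on $\beta_{\A}X$ because $\bar A$ is clopen. Since $\bar A=\text{cl}_{\beta_{\A}X}A$ and $\bar A\cap X=A$, the function $\chi_{\bar A}$ restricts to $\chi_A$ on $X$, so $(\chi_A)^{\beta}=\chi_{\bar A}$. The image is therefore a subalgebra containing the constants and every $\chi_{\bar A}$. Because $\bar\A$ is a reduced algebra of clopen sets, this subalgebra separates the points of the compact Hausdorff space $\beta_{\A}X$. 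Stone--Weierstrass then makes it dense in $C(\beta_{\A}X)$, and since the image is closed, it equals $C(\beta_{\A}X)$.

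An alternative direct route avoids Stone--Weierstrass. Given $g\in C(\beta_{\A}X)$, put $f=g|_X$. For open $V\subseteq\mathbb R$, write $g^{-1}(V)$ as a countable union of compact sets $g^{-1}(F_n)$ with $F_n$ closed. Each of these is covered by finitely many members of $\bar\A$ inside $g^{-1}(V)$, so $f^{-1}(V)$ is a countable union of members of $\A$. This lies in $\A$ because $\A$ is a $\sigma$-algebra, so $f\in M^*(X,\A)$ and $f^{\beta}=g$.

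The delicate point is where the $\sigma$-algebra hypothesis is used. In the first route it enters only through completeness of $M^*(X,\A)$, which is supplied by the cited result \cite[Proposition 3.1]{YEMS}. In the second route it enters through the countable-union step. I would present the direct route as the main argument, since it exhibits this use explicitly, and keep Stone--Weierstrass as a cross-check.
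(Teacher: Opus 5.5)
Your proof is correct. Your first route is the paper's proof. The paper takes the image $R$ of $f\mapsto f^{\beta}$ and notes that $R$ contains the constants and the functions $\chi_A^{\beta}$, which separate points because $\bar\A$ is reduced. It then uses $\|f\|=\|f^{\beta}\|$ and the completeness of $M^*(X,\A)$ to see that $R$ is uniformly closed, and finishes with Stone--Weierstrass.

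The direct route you intend to make primary is genuinely different. Instead of approximating, you identify the inverse map explicitly as $g\mapsto g|_X$ and prove that $g|_X$ is $\A$-measurable by hand. Every open $V\subseteq\mathbb R$ is an $F_\sigma$, each $g^{-1}(F_n)$ is compact, and $\bar\A$ is a clopen base. Together these make $(g|_X)^{-1}(V)$ a countable union of sets $\bar A\cap X=A$ with $A\in\A$.

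What your route buys:
\begin{itemize}
\item It is self-contained: it needs no Stone--Weierstrass and no external completeness lemma. Completeness of $M^*(X,\A)$ even falls out as a consequence of the surjective isometry onto $C(\beta_{\A}X)$.
\item It shows exactly what fails for a mere algebra. $(g|_X)^{-1}(V)$ always lies in the $\sigma$-algebra generated by $\A$, but need not lie in $\A$, as in Example~\ref{beta_A N}.
\end{itemize}

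What the paper's route buys in return: its Stone--Weierstrass step uses no countability. So for every elementary space, $M^*(X,\A)$ is isometrically a dense subalgebra of $C(\beta_{\A}X)$; that is, $C(\beta_{\A}X)$ is the completion of $M^*(X,\A)$. In Example~\ref{beta_A N}, this says eventually constant sequences are uniformly dense among convergent sequences.

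Whichever route you present, keep the isometry $\|f^{\beta}\|=\|f\|$, since that is what makes the bijection a homeomorphism.
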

\begin{proof}
Let $\varphi :M^*(X, \A)\to C(\beta_{\A}X)$ be the mapping that takes $f$ to $f^{\beta}$. By the argument preceding the proposition, $\varphi$ is one-to-one. Suppose that $\varphi(M^*(X, \A))=R\subseteq C(\beta_{\A}X)$. Using the Stone-Weierstrass  theorem, we show that $R= C(\beta_{\A}X)$. First, note that, the algebra $R$ contains the constant functions. Next,  using  Corollary \ref{beta X}, for any two distinct points $p, q\in\beta_{\A}X$ there is a measurable set $A\in\A$ such that $\text{cl}_{\beta_{\A}X}A$ contains $p$ and not $q$. Therefore $\chi_{A}^{\beta}\in R$, $\chi_{A}^{\beta}(p)=1$ and $\chi_{A}^{\beta}(q)=0$. This shows that the algebra $R$ separates points in $\beta_{\A}X$. 

Finally,  since  any continuous function takes closter points to closter points and $X$ is dense in $\beta_{\A}X$, for every $f\in M^*(X, \A)$, we have $\|f\|=\|f^\beta\|$.  Therefore  $\{f_n\}\subseteq M^*(X, \A)$ is a cauchy sequence in the uniform norm topology on $M^*(X, \A)$ if and only if $\{f^\beta\}\subseteq R$ is so. Thus \cite[Proposition 3.1]{YEMS} implies that $R$ is uniformly closed and we are done by \cite[Theorem 44.7]{W}.
\end{proof}
The following example shows that the preceding result may fail if $\A$ is merely an algebra rather than a $\sigma$-algebra. 
\begin{example}\label{beta_A N}
Let $X=\mathbb N$ be the set of all positive integers and 
$$\A=\{ A\subseteq X : A \text{ or } A^c \text{ is finite }\}.$$
It is clear that $\A$ is an algebra in $X$ and $\A^{\omega_0}=\{A\in \A : A^c \text{ is finite }\}$ is an $\A$-ultrafilter by Lemma \ref{A-ultrafilter}(2). In fact, this is the only free $\A$-ultrafilter on $X$. For, if $\mathcal F$ is a distinct $\A$-ultrafilter on $X$, then using Lemma \ref{A-ultrafilter}(3),  there is an elementary set $A\in \mathcal F\setminus\A^{\omega_0}$, i.e., $A$ is finite. Now, since $\mathcal F$ is closed under finite intersection, it is fixed. Using this, we have $\beta_{\A}X=\mathbb N^*=\mathbb N\cup\{\omega_0\}$, the one-point compactification of $\mathbb N$ and 
$$\bar\A=\{A\subseteq\mathbb N : A \text{ is finite}\}\cup\{\{\omega_0\}\cup A : A^c \text{ is finite}\}.$$

Now, it is easy to see that $M^*(X,\A)$ consists only of  real-valued sequences that are finally constant, whereas $C( \mathbb N^*)$ consists of all real-valued convergent sequences.
\end{example}
In the following result, we compare $\beta_{\A}X$ with $\beta X$ and determine precisely when $\beta_{\A}X=\beta X$.
\begin{proposition}\label{beta_AX=beta X}
Let $(X, \A)$ be an elementary space. Then the following statements are equivalent.
\begin{enumerate}
\item $\beta_{\A}X=\beta X$.
\item Any two disjoint zero-sets of $X$ separated by two disjoint elementary sets.
\item $M^*(X, \A)=C^*(X, \tau_{\A})$.
\item $M(X, \A)=C(X, \tau_{\A})$.
\item $\A= Z(X)$.
\end{enumerate}
\end{proposition}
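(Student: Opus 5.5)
The plan is to prove the cycle (5)$\Rightarrow$(4)$\Rightarrow$(3)$\Rightarrow$(5) first, and then attach (1) and (2) via (5)$\Rightarrow$(2)$\Rightarrow$(1)$\Rightarrow$(2). Throughout, $Z(X)$ means the zero-sets of $(X,\tau_{\A})$. Every $A\in\A$ is clopen, hence a zero-set, so $\A\subseteq Z(X)$ always holds. Condition (5) therefore amounts to $Z(X)\subseteq\A$. Likewise $M(X,\A)\subseteq C(X,\tau_{\A})$ always holds.

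For (5)$\Rightarrow$(4), take $f\in C(X,\tau_{\A})$ and an open $V\subseteq\mathbb R$. Then $f^{-1}(V)$ is a cozero-set, i.e. the complement of a zero-set. By (5) that zero-set lies in $\A$, hence so does $f^{-1}(V)$, and $f\in M(X,\A)$. The step (4)$\Rightarrow$(3) follows by restricting to bounded functions. For (3)$\Rightarrow$(5), every zero-set has the form $Z(f)$ with $f\in C^*(X)$ (replace $f$ by $f\wedge 1$). By (3) we get $f\in M^*(X,\A)$, and then $Z(f)\in Z_{\A}[X]=\A$ by the remark preceding Lemma \ref{units}. Finally, (5)$\Rightarrow$(2) is immediate: two disjoint zero-sets are themselves disjoint elementary sets.

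For (2)$\Rightarrow$(1), first note that $(\beta_{\A}X,\tau_{\bar\A})$ is a compact Hausdorff space containing $(X,\tau_{\A})$ as a dense subspace with the subspace topology, so it is a compactification of $X$. By the Gillman--Jerison characterization of $\beta X$ (\cite[6.5]{GJ}), it suffices to show that disjoint zero-sets $Z_1,Z_2$ of $X$ have disjoint closures in $\beta_{\A}X$. Using (2), choose disjoint $A_1,A_2\in\A$ with $Z_i\subseteq A_i$. By Corollary \ref{beta X}(5),
$$\text{cl}_{\beta_{\A}X}A_1\cap\text{cl}_{\beta_{\A}X}A_2=\text{cl}_{\beta_{\A}X}(A_1\cap A_2)=\emptyset,$$
and hence the closures of $Z_1$ and $Z_2$ are disjoint as well.

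For (1)$\Rightarrow$(2), let $Z_1,Z_2$ be disjoint zero-sets. In $\beta X=\beta_{\A}X$ their closures $K_1,K_2$ are disjoint compact sets. The family $\bar\A$ is a base of clopen sets, so we can cover $K_1$ by finitely many members of $\bar\A$ missing $K_2$. Their union $\bar B$ lies in $\bar\A$, contains $K_1$, and is disjoint from $K_2$. Then $B=\bar B\cap X\in\A$ and $X\setminus B\in\A$ are disjoint elementary sets containing $Z_1$ and $Z_2$ respectively.

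The main point requiring care is interpreting ``$\beta_{\A}X=\beta X$'' as an equivalence of compactifications of $(X,\tau_{\A})$ fixing $X$, and checking that $\beta_{\A}X$ really is such a compactification, so that the Gillman--Jerison characterization applies. I expect the rest to be routine.
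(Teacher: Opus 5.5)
Every implication you actually prove is correct. The one small slip is in $(3)\Rightarrow(5)$, where you should use $|f|\wedge 1$ rather than $f\wedge 1$. Your $(1)\Rightarrow(2)$ is the paper's argument, and your $(2)\Rightarrow(1)$ via Corollary~\ref{beta X}(5) is a slightly stronger form of the paper's $(5)\Rightarrow(1)$. But the implications do not close up. The cycle $(5)\Rightarrow(4)\Rightarrow(3)\Rightarrow(5)$ gives $(3)\Leftrightarrow(4)\Leftrightarrow(5)$, and the second chain gives $(1)\Leftrightarrow(2)$ together with $(5)\Rightarrow(2)$. Nothing leads from (1) or (2) back to (3)--(5). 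That missing arrow is the substantive step, not the identification of $\beta_{\A}X$ as a compactification. The paper supplies it as $(2)\Rightarrow(3)$: the functions $\chi_{A_0}$ separate disjoint zero-sets, and a Stone--Weierstrass-type theorem applies to the uniformly closed algebra $M^*(X,\A)$. However, uniform closedness of $M^*(X,\A)$ is a fact about $\sigma$-algebras. For the algebra of Example~\ref{beta_A N}, $M^*(X,\A)$ consists of the eventually constant sequences and is not uniformly closed.

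In fact no argument can close your loop for a general algebra, because $(2)\Rightarrow(5)$ is false. Take the Stone extension $(\mathbb N^*,\bar\A)$ of Example~\ref{beta_A N} as an elementary space in its own right.
\begin{itemize}
\item Its topology $\tau_{\bar\A}$ is that of the one-point compactification, and $\bar\A$ is exactly its family of clopen sets.
\item Hence it is compact and $\beta_{\bar\A}\mathbb N^*=\mathbb N^*=\beta\mathbb N^*$. Disjoint zero-sets are disjoint closed sets of a compact zero-dimensional space, so they are separated by members of $\bar\A$. Thus (1) and (2) hold.
\item Yet $f(n)=1/n$, $f(\omega_0)=0$ is a bounded continuous function with $Z(f)=\{\omega_0\}\notin\bar\A$, so (3), (4) and (5) all fail.
\end{itemize}
The gap can be closed only under an extra hypothesis. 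If $\A$ is a $\sigma$-algebra, Lemma~\ref{Z(X)=B(X)} makes $(X,\tau_{\A})$ a $P$-space. Then every zero-set $Z$ is clopen, so $Z$ and $X\setminus Z$ are disjoint zero-sets. Condition (2) yields disjoint $A_1,A_2\in\A$ with $Z\subseteq A_1\subseteq X\setminus A_2\subseteq Z$, i.e.\ $Z=A_1\in\A$. Adding this $(2)\Rightarrow(5)$ completes your proof in the measurable case.
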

\begin{proof}
(1) $\Rightarrow$ (2). Suppose that $Z_1$ and $ Z_2$ are two disjoint zero-sets of $X$. Thus, by the assumption and \cite[Theorem 6.5(IV)]{GJ}, we have $\text{cl}_{\beta_{\A}X}Z_1\cap \text{cl}_{\beta_{\A}X}Z_2=\emptyset$. Now, since $\bar\A$ is a base for open (closed) sets of $\beta_{\A}X$, for every $p\in \text{cl}_{\beta_{\A}X}Z_1$, there is an elementary set $A_p\in \A$ such that $p\in \text{cl}_{\beta_{\A}X}A$ and $\text{cl}_{\beta_{\A}X}A\cap \text{cl}_{\beta_{\A}X}Z_2=\emptyset$. Using the compactness of $\text{cl}_{\beta_{\A}X}Z_1$ there is a finite set of  points $p_1, ..., p_n\in\text{cl}_{\beta_{\A}X}Z_1$ such that 
$$\text{cl}_{\beta_{\A}X}Z_1\subseteq \bigcup_{i=1}^n\text{cl}_{\beta_{\A}X}A_{p_i}\subseteq \beta_{\A}X\setminus \text{cl}_{\beta_{\A}X}Z_2.$$
Thus $A_0=\bigcup_{i=1}^nA_{p_i}\in \A$, $Z_1\subseteq A_0$ and $Z_2\subseteq A_0^c$.

(2) $\Rightarrow$ (3). Suppose that $Z_1$ and $ Z_2$ are two disjoint zero-sets of $X$. Thus, by the hypothesis, we may assume that there is $A_0\in \A$ such that $Z_1\subseteq A_0$ and $Z_2\subseteq A_0^c$. It follows then, $\chi_{A_0}(Z_1)=1$ and $\chi_{A_0}(Z_2)=0$. Now, since $M^*(X, \A)$ is closed in the uniform topology by \cite[Proposition 3.1]{YEMS}, the result follows from \cite[44A.1]{W}.

(3) $\Rightarrow$ (4). Using (3), we have $\A=Z(X)$. Now, if $f\in C(X)$ and $U\subseteq \mathbb R$ is an open set, then $f^{-1}(U)$ is a cozero-set of $X$. Thus $f^{-1}(U)\in \{X\setminus Z : Z\in Z(X)\}=Z(X)=\A$, which means that $f$ is $\A$-measurable, i.e., $M(X, \A)=C(X, \tau_{\A})$.

(4) $\Rightarrow$ (5). It is evident.

(5) $\Rightarrow$ (1). Using the hypothesis and the argument preceding Corollary \ref{beta X}, any two disjoint zero-sets of $X$ have disjoint closures in $\beta_{\A}X$. Now, since $\beta_{\A}X$ is a compactification of $X$,  the result follows from    \cite[Theorem 6.5(II)]{GJ}.
\end{proof}

We close this section with a brief discussion on what is so called {\it compact measurable} space in \cite{EMY}. First, remember that an element $a$ in a lattice $L$ is called {\it compact} provided that if $a=\bigvee S$, for some subset $S\subseteq L$, then there is a finite subset $T$ of $S$ such that $a=\bigvee T$. In  \cite{EMY}, the authors  called a  measurable space $(X, \A)$, compact if $X$ is a compact element in $\A$, as a lattice. It is worth noting that if $\A$ is a $\sigma$-algebra, then  $(X, \A)$ is a compact elementary space in our sense if and only if it is a compact measurable space in the sense of \cite{EMY}, see  Proposition 4.11 in the same reference.

The  definition of a compact measurable space  (in  the sense of \cite{EMY})  may be generalized in a natural way  to elementary spaces. Nevertheless, as we see in the following, under this generalization, certain results in the literature fail to carry over, while others continue to hold with a slight modification. An elementary space $(X, \A)$ is called {\it $\mathcal L$-compact} if $X$ is a compact element of  the lattice $\A$. Clearly, every $\mathcal L$-compact elementary space $(X, \A)$, is compact. The convers of this fact is discussed in the following result.
\begin{lemma}\label{T-compact is L-compact}
A compact elementary space $(X, \A)$ is $\mathcal L$-compact if and only if, the only dense open subset of $(X, \tau_{\A})$ is  $X$ itself.
\end{lemma}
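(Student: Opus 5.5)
We prove both implications by translating $\mathcal L$-compactness into a statement about dense open sets of $(X,\tau_{\A})$. Compactness of the topological space $(X,\tau_{\A})$ will be taken from Lemma \ref{T-compactness}, and we will use throughout that $\A$ is a base of clopen sets.

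\emph{Sufficiency.} Assume $X$ is the only dense open subset of $(X,\tau_{\A})$. Let $X=\bigvee S$ in the lattice $\A$ for some $S\subseteq\A$, and put $U=\bigcup S$, which is open.

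First we show that $U$ is dense. Suppose not. Since $\A$ is a base, there is a nonempty $A\in\A$ with $A\cap U=\emptyset$. Then every member of $S$ lies in $A^c\in\A$. So $A^c$ is an upper bound of $S$ in $\A$, and hence $X=\bigvee S\subseteq A^c$. This gives $A=\emptyset$, a contradiction.

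Therefore $U$ is dense open, so $U=X$ by hypothesis. Now $S$ is an open cover of the compact space $(X,\tau_{\A})$, so some finite $T\subseteq S$ satisfies $\bigcup T=X$. Thus $\bigvee T=X$, and $X$ is a compact element of $\A$.

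\emph{Necessity.} Assume $(X,\A)$ is $\mathcal L$-compact and let $U$ be a dense open subset of $X$. Set $S=\{A\in\A : A\subseteq U\}$. Because $\A$ is a base, $\bigcup S=U$.

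We claim that $\bigvee S=X$ in $\A$. Let $B\in\A$ be an upper bound of $S$. If $B^c\neq\emptyset$, then $B^c$ is a nonempty open set, so by density it meets $U$. Using the base again, there is a nonempty $A\in\A$ with $A\subseteq B^c\cap U$. Then $A\in S$, so $A\subseteq B$; but also $A\subseteq B^c$, which is impossible. Hence $B=X$, and the only upper bound of $S$ is $X$.

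By $\mathcal L$-compactness, some finite $T\subseteq S$ has $\bigvee T=X$. A finite join in $\A$ is just the union, so $X=\bigcup T\subseteq U$. Hence $U=X$.

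The main subtlety is that joins in $\A$ are not unions. The argument rests on two facts: $\bigvee S=X$ holds exactly when $\bigcup S$ is dense, and finite joins coincide with finite unions. Once these are in place, the rest follows directly from compactness of $(X,\tau_{\A})$.
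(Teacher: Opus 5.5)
Your proof is correct and follows the paper's approach. The paper's entire proof is the observation that $\bigvee_{i\in I}A_i=X$ in $\A$ if and only if $\bigcup_{i\in I}A_i$ is dense open in $(X,\tau_{\A})$; you prove both halves of this equivalence and spell out what the paper leaves to the reader, namely compactness of $(X,\tau_{\A})$ via Lemma~\ref{T-compactness} for the finite subcover, the existence of $\bigvee S$ in the necessity direction, and that finite joins in $\A$ are unions.
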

\begin{proof}
Just note that $\bigvee_{i\in I}A_i=X$, for a subfamily $\{A_i\}_{i\in I}$ of $\A$, if and only if $\bigcup_{i\in I}A_i$ is a dense open subset of $(X, \tau_{\A})$.
\end{proof}
One may think that for a given elementary space $(X, \A)$, the Stone extension $(\beta_{\A}X, \bar\A)$ is  $\mathcal L$-compact. However, this is not the case; for instance, if $(X, \A)$ is the elementary space discussed in Example \ref{beta_A N}, then $\bigvee_{n\in\mathbb N}\{n\}=\mathbb N^*=\beta_{\A}X$, but there is no finite subfamily $T$ of $\{\{n\}\}_{n\in\mathbb N}$ such that $\bigvee T=\beta_{\A}X$.  More generally, the following proposition, shows that in the absence of measurable cardinals, for no infinite elementary space $(X, \A)$, $\beta_{\A}X$ is $\mathcal L$-compact; see Chapter 12 n \cite{GJ}, for more details on nonmeasurable cardinals.
\begin{proposition}
Let  $(X, \A)$ be an elementary space and suppose that $|X|$ is nonmeasurable. Then $(X, \A)$ is  $\mathcal L$-compact if and only if $X$ is finite and $\A=\mathcal P(X)$.
\end{proposition}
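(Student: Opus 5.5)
The easy direction comes first. If $X$ is finite and $\A=\mathcal P(X)$, then $\A$ is a finite lattice, and every element of a finite lattice is compact. So $(X,\A)$ is $\mathcal L$-compact.

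For the converse, assume $(X,\A)$ is $\mathcal L$-compact. The key observation is that $\mathcal L$-compactness makes every $\A$-ultrafilter principal at a point, and in fact forces every singleton into $\A$. Fix $x\in X$ and consider the family $\{A\in\A : x\notin A\}$. Since $\A$ is reduced, for each $y\ne x$ there is a set in $\A$ separating $x$ from $y$, and after complementing if needed we may take it to contain $y$ but not $x$. Hence the union $U$ of this family is $X\setminus\{x\}$. I split into two cases according to whether $\{x\}$ is open in $\tau_{\A}$.

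If $\{x\}$ is not open in $\tau_{\A}$, then $X\setminus\{x\}$ is a dense open subset of $X$. Lemma \ref{T-compact is L-compact} then applies: an $\mathcal L$-compact space is compact, so the lemma says $X$ is the only dense open subset of $X$. This is a contradiction. Equivalently, $\bigvee$ of the family is $X$, and a finite subfamily would have union missing $x$, which contradicts compactness of $X$ in the lattice. Therefore every singleton is open, hence a union of elements of $\A$, so $\{x\}\in\A$ for every $x$.

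It follows that $\tau_{\A}$ is discrete. By Lemma \ref{T-compactness}, $\mathcal L$-compactness implies compactness of $(X,\tau_{\A})$, and a compact discrete space is finite. Once $X$ is finite, $\A$ contains all singletons and is closed under finite unions, so $\A=\mathcal P(X)$. (Alternatively, $\bigvee_{x\in X}\{x\}=X$ in $\A$, and compactness of $X$ in the lattice directly gives a finite subcover, hence $X$ finite.)

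The delicate point is the role of the nonmeasurability hypothesis. The argument above does not seem to use it, because reducedness together with Lemma \ref{T-compact is L-compact} already yields discreteness. I would recheck the first step carefully: the case where $\{x\}$ is not open is exactly where the dense-open characterization is needed. I would also verify that ``$\bigvee$'' is computed correctly in $\A$, namely as the smallest element of $\A$ containing the union, which equals $X$ precisely when the union is dense. If that check survives, the cardinal hypothesis is superfluous and can be stated only as harmless. Should a gap appear, the fallback is to use nonmeasurability, via \cite[Chapter 12]{GJ}, to guarantee that a free ultrafilter on $\{\{x\}\}$-type families yields a dense proper union.
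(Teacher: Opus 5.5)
Your proof is correct. It shares the paper's outline ($\mathcal L$-compact $\Rightarrow$ compact with $X$ the only dense open subset of $(X,\tau_{\A})$ $\Rightarrow$ $\tau_{\A}$ discrete $\Rightarrow$ $X$ finite and $\A=\mathcal P(X)$), but you prove the middle implication differently.

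The paper turns ``$X$ is the only dense open set'' into $\mathcal Q(X)=C(X)$ via \cite[Theorem 2.6]{FGL}, and then appeals to \cite[3.5]{FGL} for discreteness. That is the only place nonmeasurability enters, and it is genuinely needed for that implication. For example, let $\mathcal U$ be a free ultrafilter with the countable intersection property on a set $D$ of measurable cardinality, and give $D\cup\{p\}$ the topology with $D$ discrete and basic neighbourhoods $\{p\}\cup U$, $U\in\mathcal U$. Its only proper dense open set is $D$, and every $f\in\mathbb R^D$ is constant on a member of $\mathcal U$ because $\mathfrak c$ is nonmeasurable. So $\mathcal Q=C$ there, yet the space is not discrete.

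You keep the stronger information instead. Reducedness makes $X\setminus\{x\}$ a union of members of $\A$, and that union is dense unless $x$ is isolated. So every point is isolated, and $\{x\}\in\A$ because $\A$ is a base.

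The point you wanted to recheck is fine. Members of $\A$ are clopen, so a member of $\A$ containing a dense set is $X$. If $\bigcup S$ is not dense, some nonempty $C\in\A$ misses it, and $X\setminus C$ is a smaller upper bound. Thus $\bigvee S=X$ in $\A$ exactly when $\bigcup S$ is dense, your direct lattice version is sound, and the fallback paragraph can be deleted.

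What your route buys is a proof with no cardinal hypothesis at all. The caveat in the preceding remark, that $\beta_{\A}X$ is not $\mathcal L$-compact for infinite $X$, can then also be dropped, since $(\beta_{\A}X,\bar\A)$ is reduced. The paper's detour only links the statement to the quotient-ring material, at the price of that hypothesis.
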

\begin{proof}
If $(X, \A)$ is  $\mathcal L$-compact, then it is compact as well. 
Now, using Lemma \ref{T-compact is L-compact}, $X$ is $\mathcal L$-compact if and only if $X$  is the only dense open subset of itself. But this implies that $\mathcal Q(X)=C(X)$, where $\mathcal Q(X)$ is the maximal ring of quotients of $C(X)$; see \cite[Theorem 2.6]{FGL}. Now, by the argument in \cite[3.5]{FGL},  it follows from the last equality that $(X, \tau_{\A})$ is a discrete space. Hence by the compactness, we conclude that $X$ is finite  and $\A=\mathcal P(X)$. The convers is evident.
\end{proof}
The preceding proposition,  is a generalization of \cite[Proposition 3.2]{EM1} to elementary spaces, in the absence of measurable cardinals. However, Proposition 2.3 in the same reference (see also \cite[Proposition 4.11]{EMY}) may fail  in this  framework; for instance if $(X, \A)$ is an infinite (with nonmeasurable cardinal) elementary space, then every proper ideal of $M(\beta_{\A}X, \bar\A)$ is  fixed, but $(\beta_{\A}X, \bar\A)$ is not  $\mathcal L$-compact, by the preceding result. In addition, if $(X, \A)$ is the elementary space discussed in Example \ref{beta_A N}, then $M(X, \A)=M^*(X,\A)$ but $(X, \A)$ is not even compact. This shows that \cite[Proposition 3.7]{EM1}  may also fail  in the context of elementary spaces.

\section{Realcompact elementary spaces}    

In this section,  we generalize and develop the theory of realcompact measurable spaces, introduced in \cite{AABS}, to the setting of  elementary spaces. Specifically, we  characterize the real maximal ideals of $M(X, \A)$ in terms of elementary sets. We then introduce a realcompact extension  of an elementary space $(X, \A)$, which serves as   a fundamental tool in the theory of realcompact elementary spaces. Moreover, it turns out that, this extension is the largest subspace of $\beta_{\A}X$ to which every $f\in M(X, \A)$ admits a real-valued continuous extension. Finally, we show that every metric space equipped with the $\sigma$-algebra of Borel sets is a realcompact measurable space.  First, we need the following lemma which is a generalization of \cite[Theorem 2.3]{ABS} to elementary functions.
\begin{lemma}\label{multiple elements}
Let $(X, \A)$ be an elementary space and $f, g\in M(X, \A)$. Then $Z(g)\subseteq Z(f)$ if and only if $f$ is a multiple of $g$.
\end{lemma}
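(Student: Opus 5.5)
The direction ``$f$ a multiple of $g$ implies $Z(g)\subseteq Z(f)$'' is immediate: if $f=hg$ with $h\in M(X,\A)$, then $g(x)=0$ forces $f(x)=0$. The content is the converse. So assume $Z(g)\subseteq Z(f)$; I will construct $h\in M(X,\A)$ with $f=hg$. The natural candidate is
$$h(x)=\begin{cases} f(x)/g(x) & x\notin Z(g),\\ 0 & x\in Z(g).\end{cases}$$
Since $Z(g)\subseteq Z(f)$, we get $hg=f$ on all of $X$: off $Z(g)$ this is trivial, and on $Z(g)$ both sides vanish. The only thing left is to show that $h$ is $\A$-measurable.

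To check measurability, note that $Z(g)\in\A$ by the argument preceding Lemma \ref{units}, so $X$ splits into the two elementary sets $A=Z(g)$ and $A^c=\text{coz\,}g$. For an open $V\subseteq\mathbb R$ we have
$$h^{-1}(V)=\bigl(h^{-1}(V)\cap A\bigr)\cup\bigl(h^{-1}(V)\cap A^c\bigr).$$
The first piece is either $A$ (when $0\in V$) or $\emptyset$, so it lies in $\A$.

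The second piece is the main obstacle. On $A^c$, the map $x\mapsto (f(x),g(x))$ takes values in $\mathbb R\times(\mathbb R\setminus\{0\})$, and the function $q(s,t)=s/t$ is continuous there. Hence
$$h^{-1}(V)\cap A^c=\{x : (f(x),g(x))\in W\},$$
where $W=q^{-1}(V)$ is open in $\mathbb R^2$. Because an algebra is closed only under finite unions, I cannot simply write $W$ as a countable union of rectangles. The paper avoids this difficulty by noting that $M(X,\A)$ is a subring of functions measurable in the sense of preimages of open sets, and that the proof of Lemma \ref{units} uses the fact that a continuous function of an $\A$-measurable function is $\A$-measurable. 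I will use the same idea through the unit trick instead of the two-variable map.

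Concretely, set $g_1=g+\chi_A$. Then $Z(g_1)=\emptyset$: on $A^c$ we have $g_1=g\ne0$, and on $A$ we have $g_1=1$. Also $g_1\in M(X,\A)$, since it is a sum of elementary functions (the ring property). By Lemma \ref{units}, $1/g_1\in M(X,\A)$. Now define
$$h=f\cdot\frac{1}{g_1}\cdot(1-\chi_A)\in M(X,\A),$$
which is a product in the ring. On $A^c$ this $h$ equals $f/g$, and on $A$ it equals $0$, so it coincides with the candidate above. Then $hg=f(1-\chi_A)\,g/g_1$ equals $f$ on $A^c$ and $0=f$ on $A$, using $Z(g)\subseteq Z(f)$. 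Thus $f=hg$, and this argument never needs the two-variable measurability, only the ring structure and Lemma \ref{units}.
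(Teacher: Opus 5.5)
Granting the paper's standing claim (Section 2) that $M(X,\A)$ is closed under pointwise sums and products, your argument is correct. It differs from the paper's only in how the ``pseudo-inverse'' of $g$ is obtained.

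The paper defines $h=1/g$ on $\text{coz\,}g$ and $h=0$ on $Z(g)$. It verifies $h^{-1}(U)\in\A$ directly from preimages under $g$ and from $Z(g)\in\A$, a step valid for any algebra, and then writes $f=g\,(hf)$. You produce the same function as $(1-\chi_A)/(g+\chi_A)$ via Lemma \ref{units} and fold $f$ into it. Your auxiliary sum and product are harmless, since each can be checked piecewise on $A$ and $A^c$, just as the paper does. So both proofs come down to the same final step: that $f$ times the pseudo-inverse of $g$ is again in $M(X,\A)$.

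That step is where your remark that you ``never need the two-variable measurability, only the ring structure'' breaks down. Closure of $M(X,\A)$ under $+$ and $\cdot$ is exactly the measurability of $q(f_1,f_2)$ for $q(s,t)=s+t$ and $q(s,t)=st$. This is the obstacle you yourself identified, and for a mere algebra it can fail.

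Take $X=\mathbb R^2$ and let $\A$ be the algebra generated by all sets $U\times\mathbb R$ and $\mathbb R\times V$ with $U,V\subseteq\mathbb R$ open. Finite unions of rectangles already form an algebra containing these generators, so every member of $\A$ has only finitely many distinct vertical sections. Now $f=\pi_1$ and $g(s,t)=e^{t}$ belong to $M(X,\A)$, and $Z(g)=\emptyset\subseteq Z(f)$. The only possible multiplier is $f/g$. But $\{(s,t): se^{-t}>1\}$ has the pairwise distinct sections $(-\infty,\ln s)$ for $s>0$, so $f/g\notin M(X,\A)$. Similarly, $\pi_1+\pi_2\notin M(X,\A)$.

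Thus the lemma fails for general elementary spaces. Neither your step $f\cdot\frac{1}{g_1}\in M(X,\A)$ nor the paper's step $hf\in M(X,\A)$ can be justified there. Both arguments are sound when $\A$ is a $\sigma$-algebra, where the ring property follows from the usual countable-union argument, e.g. $\{f_1+f_2>a\}=\bigcup_{r\in\mathbb Q}\bigl(\{f_1>r\}\cap\{f_2>a-r\}\bigr)$.
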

\begin{proof}
We only need  to prove the necessity. Thus, let $Z(g)\subseteq Z(f)$ and define 
\begin{equation*}
h(x) = \left\{
\begin{array}{rl}
0 & \text{if } x \in Z(g)\\
\frac{1}{g(x)} & \text{if } x\notin Z(g)
\end{array} \right.
\end{equation*}
Now, it is enough to show that $h\in M(X, \A)$, as $f=ghf$. To prove this, let $U$ be an open subset of $\mathbb R$ and $t(x)=\frac{1}{x}\in C(\mathbb R\setminus\{0\})$. Then, $t^{-1}(U)$ is open in $\mathbb R$. It follows then $h^{-1}(U)= g^{-1}(t^{-1}(U))\cap \text{coz\,} g\in\A$, whenever $0\notin U$ and   $h^{-1}(U)= Z(g)\cup (g^{-1}(t^{-1}(U\setminus\{0\}))\cap \text{coz\,} g)\in\A$, if $0\in U$.
\end{proof}

Using the preceding lemma, we conclude that every ideal of $M(X, \A)$ is absolutely convex. Remember that an ideal $I$ in a lattice-ordered ring $R$ is absolutely convex provided that $a\in I$, whenever $|b|\le |a|$ for some $ b\in I$.
 Also, it is a straightforward to verify that the proof of Theorems 3.4 and 3.5 in \cite{ABS}, applies verbatim in this new setting and an ideal $I$ of $M(X, \A)$ is maximal if and only if $M(X, \A)/I$ is totally ordered. Since, for any ideal $I$ of $M(X, \A)$, the lattice-ordered ring $M(X, \A)/I$ contains a copy of $\mathbb R$, this leads to the following definition.
\begin{definition}\label{real ideal}
Let $(X, \A)$ be an elementary space. A maximal ideal $M$ of $M(X, \A)$ is called real provided that $M(X, \A)/M\simeq\mathbb R$, otherwise  is called hyper-real. Also, an $\A$-ultrafilter $\A$ is called real whenever its corresponding maximal ideal in $M(X, \A)$ is real.
\end{definition}
Recall that a totally ordered filed $F$ is said to be archimedean if for every element $a\in F$, there exists a positive integer $n \in \mathbb N$ such that $n \ge a$.  On the other hand, an element $a$ in a totally ordered field $F$ is called infinitely large (resp., infinitely small), if $a>n$ (resp., $0<a<\frac{1}{n}$), for every $n\in\mathbb N$. Hence, a maximal ideal $M$ of $M(X, \A)$ is real if and only if $M(X, \A)/M$ is archimedean. For an elementary space $(X, \A)$, we denote by $\upsilon_{\A}X$ the set of all points $p\in\beta _{\A}X$ such that $M^p$ (see Corollary \ref{G-K theorem}) is a real maximal ideal of $M(X, \A)$.  Clearly $X\subseteq\upsilon_{\A}X$ and thus $(\upsilon_{\A}X, \bar\A_{R})$ is an extension of $X$, where $\bar\A_{R}=\bar\A|_{\upsilon_{\A}X}$ and $\bar\A$ is the algebra in $\beta_{\A}X$. Henceforth, $\upsilon_{\A}X$ will be considered  with this algebra. Note that, by \cite[Theorem 2.4]{AABS}, $ \bar\A_{R}$ is a $\sigma$-algebra in $\upsilon_{\A}X$, whenever $(X, \A)$ is a measurable space. 
\begin{definition}\label{m-realcompact}
An elementary space $(X, \A)$ is called realcompact  if every real maximal ideal of $M(X, \A)$ is fixed.
\end{definition}
Thus, by the preceding notation, it is clear that an elementary space $(X, \A)$ is realcompact if and only if $X =\upsilon_{\A}X$. 
In \cite[Example 3.10]{ABS}, the authors show that the set of real numbers $\mathbb R$ regarded with the $\sigma$-algebra of all Lebesgue measurable sets is a realcompact measurable space. In the following, we generalize this example and show that the measurable space $(\mathbb R^n, \mathfrak M)$  is  realcompact. 
\begin{example}
Let $(\mathbb R^n, \mathfrak M)$, for some $n\in\mathbb N$, be the Euclidean space  $\mathbb R^n$ regarded with the $\sigma$-algebra of all Lebesgue measurable sets. Moreover, let $M$ be a real maximal ideal in $M(\mathbb R^n, \mathfrak M)$. Fix some index $1\le i\le n$, and define $f: \mathbb R^n\to \mathbb R$ by $f(x_1,..., x_n)=x_i$. It is easy to see that $f\in M(\mathbb R^n, \mathfrak M)$. Now, since $M$ is real maximal, there is a real number $p_i\in\mathbb R$ such that $M(f)=M(p_i)$. This implies that $Z(f-p_i)\in Z[M]$, but 
$$Z(f-p_i)=\underbrace{\mathbb R\times ...\times\mathbb R\times \{p_i\}\times\mathbb R\times ...\mathbb R}_\text{ n factors}.$$
It follows then  $\{p\}=\{(p_1,..., p_n)\}=\bigcap_{i=1}^{n}Z(f-p_i)\in Z[M]$ which means that  $M$ is fixed and hence $(\mathbb R^n, \mathfrak M)$  is realcompact.
\end{example}
In the following, we  provide a characterization of real  maximal ideals of $M(X, \A)$. First note that, by Lemma \ref{multiple elements} and the argument after that, it is easy to see that Theorem 3.7 in \cite{ABS} holds in this framework, as its proof carries over without modification. This yields the following lemma which is a counterpart of \cite[Theorem 7.6]{GJ} and holds in $M(X, \A)$, as its proof applies equally well.
Note that, by Corollary \ref{beta X}, every   $f\in M(X, \A)$ has   a continuous extension $ f^* : (\beta _{\A}X, \tau_{\bar\A}) \to \mathbb R^*$, where $\mathbb R^*=\mathbb R\cup \{\infty\}$ is the one-point compactification of $\mathbb R$. 
\begin{lemma}\label{value of f*}
Let $(X, \A)$ be  an elementary space and $f\in M(X, \A)$, then the following statements hold
\begin{enumerate}
\item $ f^*(p)=\infty$ if and only if $|M^p(f)|$ is infinitely large.
\item $ f^*(p)=r$ if and only if $|M^p(f)-r|$ is either infinitely small or zero.
\end{enumerate}
\end{lemma}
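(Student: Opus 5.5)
The plan is to follow the proof of \cite[Theorem 7.6]{GJ}, replacing zero-sets of $C(X)$ by elementary sets and using the description $M^p=\{f\in M(X,\A): p\in\text{cl}_{\beta_{\A}X}Z(f)\}$ from Corollary \ref{G-K theorem}. We also use that $\text{cl}_{\beta_{\A}X}A=\{q: A\in\A^q\}$ for $A\in\A$. Two facts about $M(X,\A)/M^p$ will be needed. First, for $g\in M(X,\A)$ we have $M^p(g)\ge 0$ if and only if $g=|g|$ on some member of $\A^p$. Second, $M^p(g)=0$ if and only if $Z(g)\in\A^p$. Both follow from Lemma \ref{multiple elements}, the absolute convexity of ideals, and the analogue of \cite[Theorem 3.7]{ABS}, just as in \cite[5.4--5.5]{GJ}. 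In addition, for every real $s$ the sets $\{x: f(x)\le s\}$ and $\{x:f(x)\ge s\}$ are complements of $f^{-1}$ of open sets, so they lie in $\A$ because $f$ is $\A$-measurable.

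For (1), suppose first that $f^*(p)=\infty$ and fix $n$. The set $U=\{t\in\mathbb R^*: |t|>n\}$ is a neighborhood of $\infty$. Since $f^*$ is continuous and $\bar\A$ is a base, there is $A\in\A$ with $p\in\text{cl}_{\beta_{\A}X}A\subseteq (f^*)^{-1}(U)$. Hence $A\in\A^p$ and $|f|>n$ on $A$. It follows that $|f|-n$ agrees with its absolute value on a member of $\A^p$, so $M^p(|f|-n)\ge0$; indeed $M^p(|f|)>n$, since replacing $n$ by $n+1$ gives strict inequality. Therefore $|M^p(f)|=M^p(|f|)$ is infinitely large.

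Conversely, suppose $f^*(p)=r\in\mathbb R$. Then some $A\in\A^p$ satisfies $|f|\le |r|+1$ on $A$, again by continuity of $f^*$ and the base $\bar\A$. Then $M^p(|f|)\le |r|+1$, so $|M^p(f)|$ is not infinitely large. This proves (1).

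For (2), assume $f^*(p)=r$. For each $n$ choose $A_n\in\A^p$ with $|f-r|<1/n$ on $A_n$. As above, this gives $M^p(|f-r|)\le 1/n$ for all $n$, so $|M^p(f)-r|$ is zero or infinitely small. Conversely, if $|M^p(f)-r|$ is zero or infinitely small, then it is not infinitely large, so by (1) $f^*(p)=s$ for some real $s$. The forward direction then shows $|M^p(f)-s|$ is zero or infinitely small. Thus $|r-s|\le |M^p(f)-r|+|M^p(f)-s|$ is zero or infinitely small. Since $r-s$ is a real constant, this forces $r=s$.

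The main obstacle is the order-transfer step, namely the passage from ``$g\ge 0$ on a member of $\A^p$'' to ``$M^p(g)\ge0$''. This requires the elementary-space version of \cite[Theorem 5.4]{GJ}: if $g\ge 0$ on $A\in\A^p$, then $g-|g|$ vanishes on $A$. Hence $Z(g-|g|)\in\A^p$, so $g-|g|\in M^p$, and therefore $M^p(g)=M^p(|g|)\ge0$. The membership $g-|g|\in M^p$ is exactly Corollary \ref{G-K theorem}. The remaining steps are routine.
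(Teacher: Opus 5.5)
Your proof is correct, and it follows essentially the paper's route: the paper gives no separate argument and only says that the proof of \cite[Theorem 7.6]{GJ} carries over, and your proposal is that adaptation (continuity of $f^*$ against the base $\bar\A$, together with the order transfer $g\ge 0$ on some $A\in\A^p$ $\Rightarrow$ $Z(g-|g|)\in\A^p$ $\Rightarrow$ $g-|g|\in M^p$ via Corollary \ref{G-K theorem}). One small step needs tidying in the converse of (2): statement (1) concerns $|M^p(f)|$, not $|M^p(f)-r|$, so before invoking (1) you should note that $|M^p(f)|\le |M^p(f)-r|+|r|\le |r|+1$.
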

The following corollary shows that, $\upsilon_{\A}X$ is the largest subspace of $\beta_{\A}X$ in which every $f\in M(X, \A)$ has a real-valued continuous extension.   

\begin{corollary}\label{Theorem 8.4}
Let $(X, \A)$ be an elementary space and $p\in\beta_{\A}X$. Then $M^p$ is a real maximal ideal  if and only if $ f^*(p)\ne\infty$, for every $f\in M(X, \A)$.
\end{corollary}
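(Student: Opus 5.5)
The plan is to mimic the proof of \cite[Theorem 8.4]{GJ}, using Lemma \ref{value of f*} as the bridge between the values $f^*(p)$ and the residue classes $M^p(f)$ in the totally ordered field $M(X, \A)/M^p$. Recall that $M^p$ is real exactly when $M(X, \A)/M^p$ is archimedean, i.e., when it contains no infinitely large element. So both implications come down to matching ``$f^*(p)=\infty$ for some $f$'' with ``some $|M^p(f)|$ is infinitely large''.

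\emph{Necessity.} Suppose $M^p$ is real, so $M(X, \A)/M^p\simeq\mathbb R$ is archimedean. Then for every $f\in M(X, \A)$ the element $|M^p(f)|$ is not infinitely large. By Lemma \ref{value of f*}(1), $f^*(p)\ne\infty$.

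\emph{Sufficiency.} Suppose $f^*(p)\ne\infty$ for every $f\in M(X, \A)$. By Lemma \ref{value of f*}(1), no $|M^p(f)|$ is infinitely large, so the field $M(X, \A)/M^p$ is archimedean. It remains to conclude $M(X, \A)/M^p\simeq\mathbb R$, and I would do this directly with Lemma \ref{value of f*}(2), which avoids quoting the general embedding theorem for archimedean fields. Given $f$, put $r=f^*(p)\in\mathbb R$. By (2), $|M^p(f)-r|$ is either zero or infinitely small. If it were infinitely small, its inverse would be infinitely large, contradicting archimedeanity. Hence $M^p(f)=r$. Thus every residue class is a real constant, the canonical map $\mathbb R\to M(X, \A)/M^p$ is onto, and $M^p$ is real.

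The only delicate point is the use of the inverse in the sufficiency step. We need that $M(X, \A)/M^p$ is a field, which holds because $M^p$ is maximal. We also need that the inverse of a positive infinitely small element is infinitely large, which is immediate in any totally ordered field. The total order itself is supplied by the fact, noted after Lemma \ref{multiple elements}, that $M(X, \A)/I$ is totally ordered for maximal $I$. Everything else is a direct application of Lemma \ref{value of f*}, so I do not expect a genuine obstacle.
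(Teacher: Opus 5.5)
Your proof is correct. The necessity direction is the paper's own: Lemma~\ref{value of f*}(1), plus the fact that a real quotient is archimedean.

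The sufficiency direction takes a different route.

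\textbf{The paper's route.} It defines the evaluation map $\phi\colon M(X,\A)\to\mathbb R$, $\phi(f)=f^*(p)$. It asserts that $\phi$ is a ring epimorphism with kernel $M^p$ and concludes by the first isomorphism theorem. Two checks are left implicit:
\begin{itemize}
\item that $f\mapsto f^*(p)$ preserves sums and products, which is a density/continuity argument on the open neighbourhood $\{q : f^*(q)\ne\infty,\ g^*(q)\ne\infty\}$ of $p$;
\item that $M^p\subseteq\ker\phi$, which follows from continuity of $f^*$ on $\text{cl}_{\beta_{\A}X}Z(f)$, with equality then coming from maximality.
\end{itemize}

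\textbf{Your route.} You stay entirely inside the ordered field $M(X,\A)/M^p$. Part (1) of the lemma gives archimedeanity. Part (2), together with the fact that the inverse of an infinitely small element is infinitely large, forces $M^p(f)=f^*(p)$ for every $f$, so the canonical copy of $\mathbb R$ is the whole quotient.

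\textbf{Comparison.} Your argument uses both parts of Lemma~\ref{value of f*} and needs no topological verification. It also avoids the fact that an archimedean ordered field containing $\mathbb R$ equals $\mathbb R$, which the paper's remark ``real iff archimedean'' tacitly relies on. The paper's sufficiency argument does not need the lemma at all, and exhibits the isomorphism directly as evaluation at $p$. Both routes end with the same identification $M^p(f)=f^*(p)$.
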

\begin{proof}
The necessity is due to Lemma \ref{value of f*}(1). To prove the sufficiency, suppose that the condition hold and define $\phi : M(X, \A)\to \mathbb R$ by $\phi(f)=f^*(p)$. Then, this is a well-defined ring epimorphism, where $\text{ker}(\phi)=M^p$. Thus $M(X, \A)/M^p\simeq\mathbb R$ and $M^p$ is real.
\end{proof}

Let $\A$ and $\A'$ are two algebras in a set $X$ such that $A'\subseteq \A$. Then the elementary homomorphism $i :(X, \A)\to (X, \A')$ extends to an elementary homomorphism $\bar i :\beta_{\A}X\to \beta_{\A'}X$, by Theorem \ref{m-homo extension}. Using the proof of the same theorem, whenever $M^{p}$ and $M^{\bar i(p)}$ are the maximal ideals of $M(X, \A)$ and $M(X, \A')$, respectively, then $M^{\bar i(p)}=M^p\cap M(X, \A')$. Using this, we observe that $\bar i(\upsilon_{\A}X)\subseteq\upsilon_{\A'}X$. Indeed, if $p\in \upsilon_{\A}X$ then, by the third isomorphism theorem for modules \cite[6.38]{S}, we have
$$\mathbb R\subseteq \frac{M(X, \A')}{M^{\bar i(p)}}=\frac{M(X, \A')}{M^p\cap M(X, \A')}\simeq \frac{M(X, \A')+M^p}{M^p}\subseteq \frac{M(X, \A)}{M^p}\simeq\mathbb R.$$
This implies that $\frac{M(X, \A')}{M^{\bar i(p)}}\simeq \mathbb R$, i.e., $M^{\bar i(p)}$ is real or equivalently $\bar i(p)\in\upsilon_{\A'}X$. This leads to the following characterization of real maximal ideals of $M(X, \A)$. First, remember that a filter $\mathcal F$ on a set $X$ is said to have the countable intersection property if for any countable subfamily $\{F_n\}_{n=1}^\infty$ of $\mathcal F$, we have $\bigcap_{n=1}^\infty F_n\ne\emptyset$.
\begin{theorem}\label{real ideals}
 Let $(X, \A)$ be an elementary space and $M^p$ ($p\in\beta_{\A}X$) be a maximal ideal of $ M(X, \A)$. Then $M^p$ is real if and only if $Z[M^p]$ has the countable intersection property.
\end{theorem}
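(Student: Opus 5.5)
The plan is to prove both implications through the extension $f^*:\beta_{\A}X\to\mathbb R^*$ and Corollary \ref{Theorem 8.4}: $M^p$ is real if and only if $f^*(p)\neq\infty$ for every $f\in M(X,\A)$. Throughout I use the description $Z[M^p]=\A^p=\{A\in\A : p\in \text{cl}_{\beta_{\A}X}A\}$ from Corollaries \ref{Extensions} and \ref{G-K theorem}, together with $\A=Z_{\A}[X]$.

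\emph{Sufficiency (countable intersection property $\Rightarrow$ real).} Suppose $M^p$ is hyper-real. Corollary \ref{Theorem 8.4} gives some $f\in M(X,\A)$ with $f^*(p)=\infty$. Put $A_n=\{x\in X: |f(x)|\ge n\}$ for each $n\ge1$. Each $A_n$ is the complement of $f^{-1}((-n,n))$, hence lies in $\A$, so its closure in $\beta_{\A}X$ is clopen. The complement of $A_n$ is $f^{-1}((-n,n))$, whose closure meets $\{f^*=\infty\}$ nowhere, by continuity of $f^*$. By Lemma \ref{A-ultrafilter}(2), $p$ lies in $\text{cl}_{\beta_{\A}X}A_n$, so $A_n\in Z[M^p]$ for every $n$. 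But $\bigcap_n A_n=\emptyset$ because $f$ is real-valued. So $Z[M^p]$ fails the countable intersection property, which proves this direction. It is routine and uses only that $\A$ is an algebra.

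\emph{Necessity (real $\Rightarrow$ countable intersection property).} Suppose $Z_1,Z_2,\dots\in Z[M^p]$ have empty intersection. Replacing $Z_n$ by $W_n=Z_1\cap\dots\cap Z_n$, I may assume the sequence is decreasing; each $W_n$ is still in $\A^p$. The natural candidate for an element blowing up at $p$ is
$$f=\sum_{n=1}^\infty \chi_{W_n}.$$
This is real-valued because every $x$ lies in only finitely many $W_n$. Moreover $f\ge n$ on $W_n$, so $|M^p(f)|\ge n$ for each $n$. Then Lemma \ref{value of f*}(1) would give $f^*(p)=\infty$, so $M^p$ would not be real.

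\emph{Main obstacle.} The difficulty is checking that $f\in M(X,\A)$. The level sets $f^{-1}(k)=W_k\setminus W_{k+1}$ belong to $\A$. However, for an open $U\subseteq\mathbb R$, $f^{-1}(U)$ is a possibly infinite union of these level sets. Such a union need not be elementary when $\A$ is only closed under finite unions. I would first try the device from the paragraph preceding the theorem: pass to the subalgebra $\A'$ generated by the $W_n$ and use $\bar i(\upsilon_{\A}X)\subseteq\upsilon_{\A'}X$ to compare reality of $M^p$ and $M^{\bar i(p)}$, hoping the problem reduces to this countable, well-understood algebra. I must warn that I expect this step may not go through at all for a general algebra. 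In Example \ref{beta_A N}, the free ultrafilter $\A^{\omega_0}$ seems to give a real ideal: the quotient by eventually-zero sequences is $\mathbb R$. Yet its cofinite members have empty countable intersection. If that check is right, the necessity direction can only be proved under the additional hypothesis that $\A$ is a $\sigma$-algebra, where $f$ above is automatically measurable.
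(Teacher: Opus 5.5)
Your sufficiency argument is the paper's own: Corollary \ref{Theorem 8.4} gives $f$ with $f^*(p)=\infty$, and the sets $\{|f|\ge n\}$ lie in $Z[M^p]$ with empty intersection. You justify the membership in $Z[M^p]$ more carefully than the paper does, and the argument is valid for every algebra.

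On necessity, your suspicion is correct, and so is the check you hedge on. The ``only if'' half of the statement is false for algebras. So the gap is in the paper's proof, not in your proposal.

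In Example \ref{beta_A N}, $M(X,\A)=M^*(X,\A)$ is the ring of eventually constant sequences, and $M^{\omega_0}$ is the ideal of eventually zero ones. The map $f\mapsto\lim_n f(n)$ is an epimorphism onto $\mathbb R$ with kernel $M^{\omega_0}$, so $M^{\omega_0}$ is real. Equivalently, $f^*(\omega_0)$ is finite for every $f$, and Corollary \ref{Theorem 8.4} applies. Yet $A_n=\{n,n+1,\dots\}=Z(\chi_{\{1,\dots,n-1\}})\in Z[M^{\omega_0}]$ for every $n$, and $\bigcap_n A_n=\emptyset$.

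The paper's proof uses exactly the device you considered. It asserts that $\A'=\{A_n\}\cup\{A_n^c\}\cup\{X,\emptyset\}$ is a $\sigma$-algebra contained in $\A$, uses $\bar i(\upsilon_{\A}X)\subseteq\upsilon_{\A'}X$, and invokes \cite[Theorem 3.9]{ABS}. This fails in two ways:
\begin{enumerate}
\item This $\A'$ is not even closed under finite unions. With the $A_n$ above, $A_3\cup A_2^c=\mathbb N\setminus\{2\}\notin\A'$.
\item Any $\sigma$-algebra containing the $A_n$ contains every union of the atoms $A_1^c$ and $A_n\setminus A_{n+1}$, and these unions need not lie in $\A$. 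In this example that $\sigma$-algebra is $\mathcal P(\mathbb N)$.
\end{enumerate}
The comparison step needs $\A'\subseteq\A$, and the final contradiction needs $\A'$ to be a $\sigma$-algebra. In this example no $\A'$ has both properties. Consequently, later arguments that invoke this direction for general algebras are, as written, justified only for $\sigma$-algebras. The proof of Proposition \ref{Extension into realcompact} is one such case.

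When $\A$ is a $\sigma$-algebra, your function $f=\sum_n\chi_{W_n}$ works:
\begin{enumerate}
\item It is measurable, since preimages of open sets are countable unions of $W_1^c$ and the sets $W_k\setminus W_{k+1}$.
\item The function $(n-f)\vee 0$ lies in $M^p$, because its zero-set $\{f\ge n\}$ contains $W_n\in Z[M^p]$.
\item Hence $M^p(f)\ge n$ for all $n$, and $M^p$ is hyper-real.
\end{enumerate}
This is a direct proof, whereas the paper in effect defers that case to \cite[Theorem 3.9]{ABS}.

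Your last sentence overstates slightly: the $\sigma$-algebra hypothesis is sufficient, not necessary. Take the finite--cofinite algebra on an uncountable set, which is not a $\sigma$-algebra. Its only free ultrafilter, the cofinite sets, is real and has the countable intersection property, so the equivalence holds there.
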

\begin{proof}
Sufficiency. Suppose, on the contrary, that $M^p$ is not a real maximal ideal, i.e., $p\notin\upsilon_{\A}X$. Thus, by Corollary \ref{Theorem 8.4}, there is a $\A$-measurable function $f\in M(X, \A)$ such that $f^*(p)=\infty$. It follows then, $A_n=\{x\in X : |f(x)|\ge n\}\in Z[M^p]$ and clearly $\bigcap_{n=1}^\infty A_n=\emptyset$, a contradiction.

Necessity. Suppose, on the contrary, that  there is a countable subfamily $\{A_n\}_{n=1}^\infty$ of $Z[M^p]$, such that $\bigcap_{n=1}^\infty A_n=\emptyset$. Without loss of generality, we may assume that $A_{n+1}\subseteq A_n$, for any $n\in\mathbb N$. This implies that for any infinite subsequent $I$ of $\mathbb N$, we have $\bigcap_{n\in I} A_n=\emptyset$. Now, it is easy to see that the family
$$\A'=\{A_n\}_{n=1}^\infty\cup \{A^c_n\}_{n=1}^\infty\cup\{X, \emptyset\},$$
is a $\sigma$-algebra in $X$ and $\A'\subseteq \A$. Thus, the argument preceding the theorem implies that $\bar i(p)\in\upsilon_{\A'}X$ (i.e., $M^{\bar i(p)}$ is real),  where $\bar i$ is the elementary extension of $i : (X, \A)\to (X, \A')$. In addition, using the same argument, we have $M^{\bar i(p)}=M^p\cap M(X, \A')$ which implies that $\{A_n\}_{n=1}^\infty\subseteq Z[M^{\bar i(p)}]$. This, contradicts the fact that $M^{\bar i(p)}$ is real maximal, as $\bigcap_{n\in I} A_n=\emptyset$, see \cite[Theorem 3.9]{ABS}.
\end{proof}

The following proposition is the counterpart of Theorem \ref{Extension into compact}, for extending $\A$-measurable mappings to $\upsilon_{\A}X$.
\begin{proposition}\label{Extension into realcompact}
Let $(X, \A)$ be an elementary space and $\varphi$ be an $\A$-measurable mapping  from $(X, \A)$ into a  realcompact topological space $Y$. Then $\varphi$ extends to  a continuous mapping $ \varphi^\circ: \upsilon_{\A}X\to Y$. Moreover, if  $\A$ is a $\sigma$-algebra and $Y$ is second countable, then the extended mapping $\varphi^\circ$  is $\bar\A_R$-measurable.
\end{proposition}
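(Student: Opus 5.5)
I would mimic the proof of Theorem \ref{Extension into compact}, with $\upsilon_{\A}X$ playing the role of $T$. Realcompactness of $Y$ replaces compactness through the fact that a prime $z$-filter on a realcompact space $Y$ with the countable intersection property converges; equivalently, every real $z$-ultrafilter on $Y$ is fixed \cite[Theorem 8.7, 5.? / 8.4]{GJ}.

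Fix $p\in\upsilon_{\A}X$, let $\A^p$ be the associated $\A$-ultrafilter from Corollary \ref{Extensions}, and put $\varphi^{\#}\A^p=\{Z\in Z(Y): \varphi^{-1}(Z)\in\A^p\}$. As in Theorem \ref{Extension into compact}, this is a prime $z$-filter on $Y$. Since $p\in\upsilon_{\A}X$, $Z[M^p]=\A^p$ has the countable intersection property by Theorem \ref{real ideals}. Hence for $Z_n\in\varphi^{\#}\A^p$ we get $\varphi^{-1}(\bigcap_n Z_n)=\bigcap_n\varphi^{-1}(Z_n)\neq\emptyset$, so $\bigcap_n Z_n\neq\emptyset$ and $\varphi^{\#}\A^p$ has the countable intersection property.

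The main obstacle is to pass from a prime $z$-filter with this property to a point. I would take the unique $z$-ultrafilter $\mathcal U$ containing $\varphi^{\#}\A^p$ and check that it is real. If $Z_n\in\mathcal U$, then every $Z\in\mathcal U$ meets each $Z_n$, and primeness is used as follows. Choose $g_n$ with $Z_n=Z(g_n)$, $0\le g_n\le 1$, and set $W_n=\{|g_n|\ge 1/k\}$-type zero sets, which are disjoint from $Z_n$ except for neighborhoods. Since $Z(g_n)\cup\{|g_n|\ge 1/k\}=Y$ fails, use instead the decomposition $Y=\{g_n\le 1/k\}\cup\{g_n\ge 1/k\}$. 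By primeness, $\{g_n\le 1/k\}\in\varphi^{\#}\A^p$ for all $k$, since otherwise $\{g_n\ge 1/k\}\in\mathcal U$ would be disjoint from $Z_n$. Intersecting over countably many $n,k$ then stays inside $\varphi^{\#}\A^p$, whose countable intersections are nonempty, and the resulting intersection lies in $\bigcap Z_n$. So $\mathcal U$ has the countable intersection property, and by realcompactness $\mathcal U$ is fixed at a point $y$, which lies in $\bigcap\varphi^{\#}\A^p$. The point is unique because $\varphi^{\#}\A^p$ is prime and $Y$ is Tychonoff, using the same separation by $\tilde Z_0\cup\tilde Z_1=Y$ as in the compact case. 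Define $\varphi^\circ(p)=y$.

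Extension on $X$ and continuity then go exactly as in Theorem \ref{Extension into compact}. For $x\in X$, $\varphi^{\#}\A_x$ is the $z$-filter of zero sets containing $\varphi(x)$. For continuity, given a neighborhood $U$ of $\varphi^\circ(p)$, choose zero sets $\tilde Z_0,\tilde Z_1$ with $\tilde Z_0\cup\tilde Z_1=Y$, $\varphi^\circ(p)\notin\tilde Z_1$ and $\tilde Z_0\subseteq U$. Primeness gives $\tilde Z_0\in\varphi^{\#}\A^p$, and $\text{cl}\,\varphi^{-1}(\tilde Z_0)\cap\upsilon_{\A}X$ is a basic neighborhood of $p$ mapped into $\tilde Z_0\subseteq U$.

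For the measurable part, I would repeat verbatim the last paragraph of Theorem \ref{Extension into compact}. There $Y$ is second countable and regular, hence metrizable. Write the ball as $U=\bigcup B(y_0,r_n)$ and take $A'_n\in\bar\A_R$ with $A'_n\cap X=\varphi^{-1}(B(y_0,r_n))$; this uses only continuity of $\varphi^\circ$ and density of $X$. Then $\bigcup A'_n\in\bar\A_R$, because $\bar\A_R$ is a $\sigma$-algebra by \cite[Theorem 2.4]{AABS}, and the same two inclusions show that $\bigcup A'_n=(\varphi^\circ)^{-1}(U)$.
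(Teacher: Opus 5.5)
Your proof is correct, but it is organized differently from the paper's.

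\textbf{How the paper proceeds.} The paper does not rebuild the extension. It regards $\varphi$ as an $\A$-measurable map into $\beta Y$ and takes the continuous extension $\bar\varphi:\beta_{\A}X\to\beta Y$ from Theorem \ref{Extension into compact}. It then sets $\varphi^\circ=\bar\varphi|_{\upsilon_{\A}X}$. The only new ingredient is this: for $p\in\upsilon_{\A}X$, the filter $\varphi^{\#}\A^p$ inherits the countable intersection property from $\A^p$ (Theorem \ref{real ideals}). Lemma 8.12 of Gillman--Jerison then gives $\bar\varphi(p)\in Y$, since on a realcompact space every prime $z$-filter with the countable intersection property is fixed. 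Continuity comes for free, because $\varphi^\circ$ is the restriction of a continuous map taking values in the subspace $Y$.

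\textbf{How your route differs.} You rerun the construction of Theorem \ref{Extension into compact} directly with $Y$ in place of a compact space. This forces you to recheck continuity and the extension property, and your check is fine. You also prove the Gillman--Jerison lemma by hand, via the unique $z$-ultrafilter $\mathcal U\supseteq\varphi^{\#}\A^p$.

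That hand proof is valid once the exposition is cleaned up:
\begin{itemize}
\item Primeness applied to $Y=\{g_n\le 1/k\}\cup\{g_n\ge 1/k\}$, together with $\{g_n\ge 1/k\}\cap Z_n=\emptyset$ and $Z_n\in\mathcal U$, forces $\{g_n\le 1/k\}\in\varphi^{\#}\A^p$ for every $k$.
\item Hence $\bigcap_n Z_n=\bigcap_{n,k}\{g_n\le 1/k\}$ is nonempty, by the countable intersection property of $\varphi^{\#}\A^p$ alone.
\item The detour through $W_n$ is unnecessary.
\item So is the claim that the intersection ``stays inside'' $\varphi^{\#}\A^p$. That claim is true: an $\A$-ultrafilter with the countable intersection property contains every countable intersection of its members that lies in $\A$. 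You just do not need it.
\end{itemize}

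\textbf{What each approach buys.} Your version works throughout with zero-sets of $Y$ itself. The paper's version is shorter, but it leaves implicit the passage from zero-sets of $\beta Y$ to their traces on $Y$; this is harmless, since $Y$ is $C^*$-embedded in $\beta Y$.

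\textbf{Measurable part.} You use metrizability and balls, exactly as in Theorem \ref{Extension into compact}. The paper instead uses a countable base of zero-set neighbourhoods, writing $U=\bigcup_n Z_n=\bigcup_n\text{int}_Y Z_n$. The two are interchangeable: all that is needed is to write $U$ as a countable union of open sets whose closures lie in $U$.

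One small correction: the existence of $A'_n$ comes from $\bar\A_R|_X=\A$, not from continuity. Continuity and density are used only for the two inclusions.
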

\begin{proof}
The hypothesis implies that $ \varphi: X\to Y\subseteq \beta Y$ is $\A$-measurable and thus it extends to a continuous mapping $\bar\varphi :\beta_{\A}X\to \beta Y$, by Theorem \ref{Extension into compact}. Let $\varphi^\circ=\bar\varphi|_{\upsilon_{\A}X}$. Now, if $p\in\upsilon_{\A}X$ then $\A^p=Z[M^p]$ has the countable intersection property, by Theorem \ref{real ideals}. Thus, $\varphi^{\#}\A^p$ (see the proof of Theorem \ref{Extension into compact}) has the countable intersection property as well. It follows from this and \cite[Lemma 8.12]{GJ}, that $\varphi^{\#}\A^p$ is fixed and hence $\bar\varphi(p)\in Y$, i.e., $\varphi^\circ(p) :\upsilon_{\A}X\to Y$ is well-defined.

To prove the last part, first  note that  $ \bar\A_{R}$ is a $\sigma$-algebra in $\upsilon_{\A}X$, whenever $(X, \A)$ is a measurable space, by \cite[Theorem 2.4]{AABS}. Next, since $Y$ is completely regular, the family  $\mathcal B$ consisting of zero-sets with non-empty interior constitute a base for the open sets, by \cite[3.2(b)]{GJ}. Now, using second countability of $Y$, there is a countable subfamily $\mathcal B_0$ of $\mathcal B$ which is a base for the open sets, by \cite[Theorem 1.1.15]{E}. Thus, if $U$ is an open subset of $Y$, for any point $p\in U$, there exists a zero-set-neighborhood $Z_p\in\mathcal B_0$ of $p$ which is contained in $U$. This implies that 
$$U=\bigcup_{n=1}^{\infty}Z_n=\bigcup_{n=1}^{\infty}\text{int}_Y Z_n,$$
for some countable subfamily $\{Z_n\}_{n=1}^{\infty}$ of $\mathcal B_0$. The rest of the proof is similar to the proof of Theorem \ref{Extension into compact}.
\end{proof} 

\begin{corollary}\label{uX properties}
Every elementary space $(X, \A)$, has a realcompact extension $\upsilon_{\A}X$ which is contained in $\beta_{\A}X$ and has the following properties:
\begin{enumerate}
\item $\upsilon_{\A}X$ is the largest subspace of $\beta_{\A}X$ in which every $f\in M(X, \A)$ has a continuous extension $f^{\upsilon_{\A}}\in  C(\upsilon_{\A}X)$. Moreover, if $\A$ is a $\sigma$-algebra, then $f^{\upsilon_{\A}}\in M(\upsilon_{\A}X, \bar\A_R)\subseteq C(\upsilon_{\A}X, \tau_{\bar\A_R})$.
\item $(\upsilon_{\A}X, \bar\A_R)$ is the smallest realcompact  extension of $X$.
\item If $X\subseteq T\subseteq\upsilon_{\A}X$ and $\A'=\bar \A|_T$, then $\upsilon_{\A'}T=\upsilon_{\A}X$.
\item If $\A$ is a $\sigma$-algebra, $X\subseteq T\subseteq \upsilon_{\A}X$ and $\{A_n\}_{n\in\mathbb N}\subseteq\A$, then
$$\text{cl}_T\bigcap_{n\in\mathbb N}A_n=\bigcap_{n\in\mathbb N}\text{cl}_T A_n.$$
\item Every $\A$-measurable mapping  from $X$ into any realcompact topological space $Y$, extends continuously to   $\upsilon_{\A}X$. 
\item Every  elementary homomorphism $\varphi$ from $(X, \A)$ into any realcompact elementary space $(Y, \tilde A)$,  extends to an elementary homomorphism on $(\upsilon_{\A}X, \tau_{\bar\A_R})$. 
\end{enumerate}
\end{corollary}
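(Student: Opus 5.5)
I will prove the six items of Corollary \ref{uX properties} in the order (1), (5), (6), (4), (3), (2), since the last two rely on the earlier ones. The main tools are Corollary \ref{Theorem 8.4}, Theorem \ref{real ideals} and Proposition \ref{Extension into realcompact}.

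\emph{Item (1).} Corollary \ref{Theorem 8.4} says that a point $p\in\beta_{\A}X$ lies in $\upsilon_{\A}X$ exactly when $f^*(p)\ne\infty$ for every $f\in M(X,\A)$. So $f^*|_{\upsilon_{\A}X}$ is a real-valued continuous extension of $f$, and no larger subspace of $\beta_{\A}X$ admits such extensions of all $f$. For the $\sigma$-algebra refinement I apply the last part of Proposition \ref{Extension into realcompact} with $Y=\mathbb R$, which is realcompact and second countable.

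\emph{Items (5) and (6).} Item (5) is exactly Proposition \ref{Extension into realcompact}. For (6), I mimic Theorem \ref{m-homo extension}. Given $p\in\upsilon_{\A}X$, the ultrafilter $\A^p$ has the countable intersection property by Theorem \ref{real ideals}. Hence $\varphi^{\#}\A^p$ is an $\tilde\A$-ultrafilter on $Y$ with the countable intersection property. I then use the paper's duality with ideals and Theorem \ref{real ideals} again to see that its maximal ideal is real. Since $Y$ is realcompact, that ideal is fixed, and reducedness makes the intersection a single point $\bar\varphi(p)$. The same computation as in Theorem \ref{m-homo extension} gives $\bar\varphi^{-1}(\tilde A)=\text{cl}_{\upsilon_{\A}X}\varphi^{-1}(\tilde A)\in\bar\A_R$.

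\emph{Item (4).} The inclusion $\subseteq$ is trivial. For the reverse, take $p\in\bigcap_n\text{cl}_TA_n$. Then every $A_n$ belongs to $\A^p$. Since $p\in\upsilon_{\A}X$, the ideal $M^p$ is real, so \cite[Theorem 2.4]{AABS} (or the countable closure in Corollary \ref{Extensions} applied to the $\sigma$-algebra $\bar\A_R$) gives $\bigcap_nA_n\in\A^p$. This means $p\in\text{cl}_T\bigcap_nA_n$.

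\emph{Item (3).} By Lemma \ref{m-sets of extensions}, the algebra $\A'$ is $\{\text{cl}_T A : A\in\A\}$, and restriction to $X$ identifies $M(T,\A')$ with $M(X,\A)$: each $f$ extends by (1), and restriction is injective by density. By Corollary \ref{beta X}(4), $\beta_{\A'}T=\beta_{\A}X$. The $\A'$-ultrafilters on $T$ then correspond to $\A$-ultrafilters on $X$ by tracing with $X$. This correspondence preserves the countable intersection property, because a countable family of sets in $\A'$ with empty intersection in $T$ traces to one with empty intersection in $X$. I expect this check to be the delicate step. Granting it, Theorem \ref{real ideals} gives $\upsilon_{\A'}T=\upsilon_{\A}X$.

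\emph{Item (2).} First, $\upsilon_{\A}X$ is realcompact: apply (3) with $T=\upsilon_{\A}X$ to get $\upsilon_{\bar\A_R}(\upsilon_{\A}X)=\upsilon_{\A}X$. For minimality, let $(T,\A')$ be a realcompact extension of $X$. By Corollary \ref{beta X}(3) we have $T\subseteq\beta_{\A}X$, and it suffices to show $\upsilon_{\A}X\subseteq T$. Take $p\in\upsilon_{\A}X$; viewed in $\beta_{\A'}T=\beta_{\A}X$ it is the limit of an ultrafilter on $T$ whose trace $\A^p$ on $X$ has the countable intersection property. I will argue that the corresponding $\A'$-ultrafilter is real, using Theorem \ref{real ideals} together with the traces argument from (3). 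Realcompactness of $T$ then makes it fixed at a point of $T$, which must be $p$.

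The main obstacle is this last step. Countable intersection property on $X$ transfers upward to $T$, since an intersection nonempty in $X$ is nonempty in $T$. That direction is the one needed here, so I expect the step to go through.
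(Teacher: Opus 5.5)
\textbf{Gap in item (3), and through it in the realcompactness of $\upsilon_{\A}X$.} Your items (1), (4), (5), (6) and the minimality half of (2) are sound and follow the paper closely. In (4) you get countable closure of $\A^p$ from $\bar\A_R$ being a $\sigma$-algebra via Corollary \ref{Extensions}, which works equally well. The trouble is in (3). Because you obtain the realcompactness of $\upsilon_{\A}X$ by taking $T=\upsilon_{\A}X$ in (3), the gap reaches the main assertion of the corollary.

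Your sentence about the countable intersection property proves only one implication. Put $\A^{'p}=\{A'\in\A' : A'\cap X\in\A^p\}$. A countable subfamily of $\A^{'p}$ with empty intersection in $T$ has traces with empty intersection in $X$. So if $\A^p$ is real, then $\A^{'p}$ is real. That gives $\upsilon_{\A}X\subseteq\upsilon_{\A'}T$, which is trivial when $T=\upsilon_{\A}X$.

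The inclusion with content is $\upsilon_{\A'}T\subseteq\upsilon_{\A}X$. It needs the converse: the countable intersection property of $\A^{'p}$ on $T$ must imply it for the trace on $X$. That converse is false unless you use $T\subseteq\upsilon_{\A}X$. For $T=\beta_{\A}X$, every $\bar\A$-ultrafilter is fixed and so has the countable intersection property. Yet $\A^p$ fails it for every $p\notin\upsilon_{\A}X$, by Theorem \ref{real ideals}. Your argument never invokes $T\subseteq\upsilon_{\A}X$. As written it would therefore also prove $\upsilon_{\bar\A}(\beta_{\A}X)=\upsilon_{\A}X$, which fails whenever $\upsilon_{\A}X\ne\beta_{\A}X$.

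\textbf{How to close it.} The missing step is the one the paper uses in its direct proof that $\upsilon_{\A}X$ is realcompact. Suppose $A'_n\in\A^{'p}$ and $q\in\bigcap_nA'_n$, which exists by the countable intersection property on $T$. Then $q\in T\subseteq\upsilon_{\A}X$, so every $A'_n\cap X$ lies in the real ultrafilter $\A^q$ by Corollary \ref{Extensions}. Theorem \ref{real ideals} then gives $\bigcap_n(A'_n\cap X)\ne\emptyset$. The paper proves realcompactness this way first, proves (2) essentially as you do, and deduces (3) from (2).

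Alternatively, for $p\notin\upsilon_{\A}X$, choose $f$ with $f^*(p)=\infty$ (Corollary \ref{Theorem 8.4}). The sets $\text{cl}_T\{x\in X : |f(x)|\ge n\}$ belong to $\A^{'p}$. They have empty intersection, because $f^{\upsilon_{\A}}$ is real-valued and continuous on $T\subseteq\upsilon_{\A}X$.

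One further point. You claim restriction identifies $M(T,\A')$ with $M(X,\A)$ ``by (1)''. Item (1) supports this only when $\A$ is a $\sigma$-algebra; for a general algebra it gives continuity of the extension, not $\A'$-measurability. You never actually use that identification, and the continuity argument above avoids the issue.
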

\begin{proof}
First, we prove that $(\upsilon_{\A}X, \bar\A_R)$ is realcompact. To do this, let $\mathcal F$ be a real $\bar\A_R$-ultrafilter on $\upsilon_{\A}X $ and define $\mathcal F_0=\{A\cap X : A\in\mathcal F\}$. Since for any $A\in\A$, we have $\upsilon_{\A}X\setminus\text{cl}_{\upsilon_{\A}X}A=\text{cl}_{\upsilon_{\A}X}(X\setminus A)$, it is easy to see that $\mathcal F_0$ is an $\A$-ultrafilter on $X$; see also Lemmas \ref{m-sets of extensions} and \ref{A-ultrafilter}(2). On the other hand, by Theorem \ref{real ideals},  if $\{A_n\}_{n=1}^\infty\subseteq \mathcal F$ then there is $p\in\bigcap_{n=1}^\infty A_n\subseteq\upsilon_{\A}X$. This implies that $\{A_n\cap X\}_{n=1}^\infty \subseteq \A^p$, where $\A^p$ is the unique real $\A$-ultrafilter on $X$, obtained by Corollary \ref{Extensions}, that is convergent to $p\in\upsilon_{\A}X$. Another application of Theorem \ref{real ideals} shows that, $\bigcap_{n=1}^\infty A_n\cap X\ne\emptyset$, i.e., $\mathcal F_0$ is real. This means that $\mathcal F_0$ is convergent to a point $p_0\in\upsilon_{\A}X$. Therefore $p_0\in\bigcap_{\A\in\mathcal F}A$ and $\mathcal F$ is fixed,  as $\text{cl}_{\upsilon_{\A}X}(A\cap X)=A$, for any $A\in\bar\A_R$.
\begin{enumerate}
\item This is due to Corollary \ref{Theorem 8.4}. The last part, is a special case of Proposition \ref{Extension into realcompact}, as  $\mathbb R$ is second countable.
\item  Let $(T, \A')$ be a realcompact elementary    extension of $X$  and  $p\in \upsilon_{\A}X$. Moreover, let $\A^p$ be the unique real $\A$-ultrafilter on $X$, obtained by Corollary \ref{Extensions}, that  converges to $p$ and define $\A^{'p}:=\{ A'\in \A' :  A'\cap X\in\A^p \}$. A straightforward argument, using Lemmas \ref{m-sets of extensions} and \ref{A-ultrafilter}(2), shows that $\A^{'p}$ is an $\A'$-ultrafilter on $T$. On the other hand, since $\A^p$ has the countable intersection property, by Theorem \ref{real ideals},  if $\{A'_n\}_{n=1}^\infty \subseteq \A^{'p}$, then 
$$\emptyset\ne\bigcap_{n=1}^\infty (A'_n\cap X)=\bigcap_{n=1}^\infty A'_n\cap X,$$
which implies that $\bigcap_{n=1}^\infty A'_n\ne\emptyset$, i.e., $\A^{'p}$ is real by the same theorem. Now, as $T$ is realcompact,  $\A^{'p}$ is fixed and it is easy to see that $ \A^{'p}$ is convergent (in $\beta_{\A}X=\beta_{\A'}T$) to $p$ and thus $p\in T$, i.e., $\upsilon_{\A}X\subseteq T$.

\item Since $T\subseteq\upsilon_{\A}X$ and the latter  is  realcompact by the first argument, $\upsilon_{\A'}T\subseteq \upsilon_{\A}X$, by part (2). On the other hand, since $\upsilon_{\A'}T$ is a realcompact extension of $X$, $\upsilon_{\A}X\subseteq \upsilon_{\A'}T$, by part (2).
\item  Let $p\in \bigcap_{n\in\mathbb N}\text{cl}_T A_n$ and $\A^p$ be the unique real $\A$-ultrafilter on $X$, obtained by Corollary \ref{Extensions}, convergent to $p$. 
Using the same corollary,     $A_n\in \A^p$ for every $n\in\mathbb N$. Now, since   $\A^p$ is closed under countable intersection, by \cite[Theorem 3.9]{ABS}, we conclude that $\bigcap_{n\in\mathbb N}A_n\in\A^p$. This means that $p\in \text{cl}_T\bigcap_{n\in\mathbb N}A_n$, by Corollary \ref{Extensions}. As the reverse inclusion is evident, we are done.
\item This is duo to Proposition \ref{Extension into realcompact}.
\item This is done, by applying the same proof of Theorem \ref{m-homo extension}. Only note that, $\varphi^{\#}\A^p$ has the countable intersection property, whenever $\A^p$ is so, see Theorem \ref{real ideals}.
\end{enumerate}
\end{proof}
In fact, the mapping $f \to f^{\upsilon_{\A}}$ in part (1) of the preceding corollary, is  an (ring) isomorphism from $M(X, \A)$  onto $M(\upsilon_{\A}X, \bar\A_R)$, for a measurable space $(X, \A)$.

The following result, establishes a necessary condition for  an elementary space $(X, \A)$ to be realcompact.

\begin{proposition}\label{real measurable is realcompact}
If an elementary space $(X, \A)$  is realcompact, then $(X, \tau_{\A})$ is a realcompact topological space.
\end{proposition}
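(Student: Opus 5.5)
We prove that $(X,\tau_{\A})$ is realcompact in the topological sense by checking that every real $z$-ultrafilter on it is fixed. Equivalently, we show that every $z$-ultrafilter $\mathcal U$ on $(X,\tau_{\A})$ with the countable intersection property is fixed. The idea is to trace $\mathcal U$ down to the algebra $\A$ and then use the hypothesis together with Theorem \ref{real ideals}.

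First observe that $M(X,\A)\subseteq C(X,\tau_{\A})$, so $\A=Z_{\A}[X]\subseteq Z(X,\tau_{\A})$. Now set $\mathcal F=\mathcal U\cap\A$.

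\begin{enumerate}
\item \emph{$\mathcal F$ is an $\A$-ultrafilter.} It is clearly an $\A$-filter. Given $A\in\A$, both $A$ and $A^c$ are zero-sets, and $A\cup A^c=X\in\mathcal U$. Since a $z$-ultrafilter is prime, $A\in\mathcal U$ or $A^c\in\mathcal U$. By Lemma \ref{A-ultrafilter}(2), $\mathcal F$ is an $\A$-ultrafilter.
\item \emph{$\mathcal F$ is real.} Since $\mathcal F\subseteq\mathcal U$, it inherits the countable intersection property. By Theorem \ref{real ideals} it is real. Because $(X,\A)$ is realcompact, $\mathcal F$ is fixed. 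As $X$ is reduced, $\bigcap\mathcal F=\{x\}$ for some $x\in X$, so $\mathcal F=\A_x$.
\item \emph{$\mathcal U$ is fixed at $x$.} It remains to show $x\in Z$ for every $Z\in\mathcal U$. Suppose instead $x\notin Z$. Since $Z$ is closed and $\A$ is a base, there is $A\in\A$ with $x\in A$ and $A\cap Z=\emptyset$. Then $A\in\A_x=\mathcal F\subseteq\mathcal U$, while $A\cap Z=\emptyset$ with $Z\in\mathcal U$, a contradiction. So $x\in\bigcap\mathcal U$, and $\mathcal U$ is fixed.
\end{enumerate}

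This shows every real $z$-ultrafilter on $(X,\tau_{\A})$ is fixed, so $(X,\tau_{\A})$ is realcompact by \cite[Theorem 8.4]{GJ}.

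The main obstacle is the first step: the primeness of $z$-ultrafilters is exactly what forces $\mathcal U\cap\A$ to be an $\A$-ultrafilter, and not merely an $\A$-filter. Once that is in place, the countable intersection property passes down for free and Theorem \ref{real ideals} handles the rest.
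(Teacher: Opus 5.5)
Your argument is correct and is essentially the paper's proof: restrict the $z$-filter to $\A$, use primeness to get an $\A$-ultrafilter with the countable intersection property, then apply Theorem \ref{real ideals} and the realcompactness of $(X,\A)$ to fix it at a point $x$, and finally use that $\A$ is a base for $\tau_{\A}$ to show $x$ lies in every member. The only cosmetic difference is that the paper tests realcompactness on prime $z$-filters closed under countable intersection, while you test it on $z$-ultrafilters with the countable intersection property; that is an equally standard characterization, although the theorem number you cite for it may be off.
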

\begin{proof}
 Using \cite[Lemma 8.12]{GJ} a topological space $X$ is realcompact if and only if every prime $z$-filter which is closed under countable intersection is fixed. Note that $M(X, \A)\subseteq C(X, \tau_{\A})$, which implies that $\A\subseteq Z(X)$. Now, suppose that $\mathcal F$ is a prime $z$-filter on $X$ which has the countable intersection property and let $\mathcal F_{\A}=\mathcal F\cap \A$. Since $\A\subseteq Z(X)$, it is clear that  $\mathcal F_{\A}$ is a  $\A$-ultrafilter, by Lemma \ref{A-ultrafilter}(2), and it has the countable intersection property.  Thus, Theorem \ref{real ideals} implies that $\mathcal F_{\A}$ is a real $\A$-ultrafilter on $X$ and hence it is fixed. We assume that $p\in\bigcap\mathcal F_{\A}$ and show that $p\in\bigcap\mathcal F$. Suppose, on the contrary, that $p\notin Z$, for some $Z\in\mathcal F$. Since $Z\subseteq X$ is closed and $\A$ is a base for the open (closed) subsets of $(X, \tau_{\A})$, there exists some $A\in\A$ such that $p\in A\subseteq X\setminus Z$. Then $Z\subseteq X\setminus A$ and therefore $X\setminus A\in\mathcal F$, as $X\setminus A\in\A\subseteq Z(X)$. But, this implies that $X\setminus A\in\mathcal F_{\A}$ and $p\notin X\setminus A$,  which is  a contradiction. This  shows that $\mathcal F$ is fixed and hence $(X, \tau_{\A})$ is realcompact by \cite[Lemma 8.12]{GJ}.
\end{proof}
The   convers of  Proposition \ref{real measurable is realcompact} is not  true in general, even if $(X, \A)$ is a measurable space;  see the following example.
\begin{example}\label{realcompact&realmeasurable}
Let $\omega_1$ be the first uncountable ordinal and $X=\mathbb W=\{\alpha : \alpha<\omega_1 \}$ be the set of all countable ordinals.  For any ordinal number $\alpha$, the set of all ordinals less than $\alpha$ is denoted by $W(\alpha)$, in particular $\mathbb W=W(\omega_1)$. For any ordinal number $\alpha$, $\mathbb W\setminus W(\alpha)$ is called a tail in $\mathbb W$; see \cite[5.12]{GJ}, for more details on the space of (countable) ordinals.  Now, let 
$$\A=\{ A\subseteq X : |A|\le\aleph_0 \text{ or } |A^c|\le\aleph_0\},$$
where  $\aleph_0$ is the least infinite cardinal number. It is easy to see that $\A$ is a $\sigma$-algebra in $X$. Obviously, $\tau_{\A}$ is the discrete topology on $X$ and since $|X|=\mathfrak c$ is a nonmeasurable cardinal, $(X, \tau_{\A})$ is a realcompact topological space by the argument after \cite[Corollary 8.18]{GJ}. However
$$\A^{\omega_1}=\{A\in\A : A \text{ contains a tail in } \mathbb W\},$$
is a free real $\A$-ultrafilter in $X$. In fact, if $A_0\in\A$ and  $A_0\notin\A^{\omega_1}$, then $A^c$ contains a cofinal subset of $\mathbb W$ and since no countable set in $\mathbb W$ is cofinal by \cite[5.12(a)]{GJ}, we conclude that $A_0^c$ is uncountable. This shows that $A_0^c\in \A^{\omega_1}$; otherwise, a similar argument implies that $A_0$  is uncountable, which contradicts the definition of measurable sets of $\A$. Moreover, another application of \cite[5.12(a)]{GJ} implies that  $\A^{\omega_1}$ is closed under countable intersection.  Hence $\A^{\omega_1}$ is a real $\A$-ultrafilter by Lemma \ref{A-ultrafilter}(2) and \cite[Theorem 3.9]{ABS}, which is clearly free.

As a matter of fact, this is the only free real $\A$-ultrafilter in $X$ and $\upsilon_{\A}X=\mathbb W^*=W(\omega_1+1)$ and
$$\bar\A_R=\{A\subseteq X : |A|\le\aleph_0 \}\cup\{A\cup\{\omega_1\} :      A\subseteq X \text{ and } |A^c|\le\aleph_0\}.$$ 
 Indeed, if $\A^q$ is a distinct  real $\A$-ultrafilter in $\mathbb W$, then there are disjoint measurable sets $A$ and $B$ of $\A$ such that $A\in \A^{\omega_1}$ and $B\in \A^q$, by Lemma \ref{A-ultrafilter}(3). This implies that $B\subseteq W(\alpha)$, for some $\alpha<\omega_1$, and hence $W(\alpha)\in\A^q$. Now, since $W(\alpha)=\bigcup_{\sigma<\alpha}\{\sigma\}$ and $\A^q$  is closed under countable intersection by \cite[Theorem 3.9]{ABS}, there exists $\sigma<\alpha$ such that $\{\sigma\}\in\A^q$, i.e., $\A^q$ is fixed.
\end{example}
In the previous section we provide  equivalent conditions (in terms of elementary sets) under which $\beta X=\beta_{\A}X$. Now, since it is easy to see that an elementary space $(X, \A)$ is realcompact if and only if $(X, \tau_{\A})$ is realcompact and $ \upsilon X=\upsilon_{\A}X$, this leads to the following question.
\begin{question}
Under what conditions on an algebra $\A$, do we have $ \upsilon X=\upsilon_{\A}X$?
\end{question}

 Suppose that $(X, \tau)$ is a topological space. Then we may consider  $(X, \mathscr B)$ or $(X, \A_z)$, where  $\mathscr B=<\tau>$ is the $\sigma$-algebra generated by open sets and $\A_z=<Z(X)>$. The members  of $\mathscr B$ are called the Borel sets of $X$.
In \cite{ABS}, the authors raised a question that if $(X, \tau)$ is a realcompact topological space, is $(X, \mathscr B)$  a realcompact measurable space? The following proposition  provides conditions under which  the  measurable space $(X, \mathscr B)$ is  realcompact.
 First, we need the following lemma which describes measurable sets of a $\sigma$-algebra  generated by a nonempty family $\mathcal B$ of subsets of a set $X$.  
\begin{lemma}
 Let $\mathcal B$ be a nonempty family  of subsets of a set $X$.  Then  $<\mathcal B>=\bigcup_{\alpha<\omega_1}\mathcal B^\alpha$, where $\mathcal B^0=\mathcal B \cup \mathcal B^c$ and  for any countable ordinal number $\beta>0$, 
$$\mathcal B^\beta=\{\bigcup_{n=1}^\infty B^{\alpha_n}_n : B^{\alpha_n}_n\in \mathcal B^{\alpha_n} \text{, } \alpha_n<\beta\}\cup \{\bigcap_{n=1}^\infty B^{\alpha_n}_n : B^{\alpha_n}_n\in \mathcal B^{\alpha_n} \text{, } \alpha_n<\beta\},$$
where $\omega_1$ is the first uncountable ordinal number and $\mathcal B^c=\{B^c : B\in\mathcal B\}$.
\end{lemma}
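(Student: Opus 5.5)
We prove the equality $<\mathcal B>=\mathcal U$, where $\mathcal U:=\bigcup_{\alpha<\omega_1}\mathcal B^\alpha$, by showing both inclusions.

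\emph{The inclusion $\mathcal U\subseteq<\mathcal B>$.} We use transfinite induction on $\beta<\omega_1$. Since $<\mathcal B>$ is a $\sigma$-algebra containing $\mathcal B$, it also contains $\mathcal B^c$, so $\mathcal B^0\subseteq<\mathcal B>$. Suppose $\mathcal B^\alpha\subseteq<\mathcal B>$ for every $\alpha<\beta$. Every member of $\mathcal B^\beta$ is a countable union or a countable intersection of sets lying in such $\mathcal B^{\alpha_n}$. Since $<\mathcal B>$ is closed under countable unions and countable intersections, $\mathcal B^\beta\subseteq<\mathcal B>$.

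\emph{The inclusion $<\mathcal B>\subseteq\mathcal U$.} Since $\mathcal B\subseteq\mathcal B^0\subseteq\mathcal U$, it suffices to show that $\mathcal U$ is a $\sigma$-algebra.

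First, the hierarchy is increasing: $\mathcal B^\alpha\subseteq\mathcal B^\beta$ whenever $\alpha<\beta$. Indeed, given $B\in\mathcal B^\alpha$, take $B_n=B$ for all $n$; then $B=\bigcup_n B_n\in\mathcal B^\beta$. In particular $\mathcal B^0\subseteq\mathcal B^\beta$ for every $\beta\ge1$. Also, $X=B\cup B^c$ for any $B\in\mathcal B$ (here $\mathcal B\neq\emptyset$ is used), so $X\in\mathcal B^1\subseteq\mathcal U$.

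Second, $\mathcal U$ is closed under countable unions. Let $C_n\in\mathcal B^{\alpha_n}$ with $\alpha_n<\omega_1$ for each $n$. The supremum $\gamma=\sup_n\alpha_n$ is a countable ordinal, because $\omega_1$ has uncountable cofinality (\cite[5.12(a)]{GJ}). Hence $\gamma+1<\omega_1$, and $\bigcup_n C_n\in\mathcal B^{\gamma+1}\subseteq\mathcal U$.

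Third, $\mathcal U$ is closed under complements. We prove by induction on $\beta$ that $\mathcal B^\beta$ is closed under complements. For $\beta=0$ this is immediate, since $\mathcal B^0=\mathcal B\cup\mathcal B^c$. For $\beta>0$, let $B\in\mathcal B^\beta$. If $B=\bigcup_n B_n$ with $B_n\in\mathcal B^{\alpha_n}$ and $\alpha_n<\beta$, then by De Morgan $B^c=\bigcap_n B_n^c$. By the induction hypothesis $B_n^c\in\mathcal B^{\alpha_n}$, so $B^c$ lies in the intersection part of $\mathcal B^\beta$. The case where $B$ is a countable intersection is symmetric.

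This symmetric definition of $\mathcal B^\beta$, taking both unions and intersections at each stage, is exactly what makes the complement step clean. The only genuinely non-formal ingredient in the whole argument is the cofinality step in the countable-union closure, where we need $\sup_n\alpha_n<\omega_1$.

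Together, these show that $\mathcal U$ is a $\sigma$-algebra containing $\mathcal B$, so $<\mathcal B>\subseteq\mathcal U$. Combined with the first inclusion, $<\mathcal B>=\mathcal U$.
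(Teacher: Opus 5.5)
Your proof is correct and follows essentially the same route as the paper: countable unions are handled by the uncountable cofinality of $\omega_1$ (\cite[5.12(a)]{GJ}), and closure under complements is proved by transfinite induction using De Morgan's laws. Your version is slightly cleaner in two respects: you show that each $\mathcal B^\beta$ is itself closed under complements, whereas the paper only places $B^c$ in $\mathcal B^{\beta+1}$, and you explicitly check monotonicity of the hierarchy, $X\in\bigcup_{\alpha<\omega_1}\mathcal B^\alpha$, and the inclusion $\bigcup_{\alpha<\omega_1}\mathcal B^\alpha\subseteq <\mathcal B>$, all of which the paper leaves implicit.
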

\begin{proof}
Using \cite[5.12(a)]{GJ}, every countable family of countable ordinal numbers has a supremum less than $\omega_1$ and thus
 $\bigcup_{\alpha<\omega_1}\mathcal B^\alpha$ is closed under countable union. Now, it is enough to show that it is closed under complement operation, as in this case clearly $\bigcup_{\alpha<\omega_1}\mathcal B^\alpha$ is the smallest $\sigma$-algebra which contains $\mathcal B$ and then we are through. We prove this by the principle of transfinite induction. Clearly, for any $B\in \mathcal B^0$, we have $B^c\in \mathcal B^0\subseteq \mathcal B^1$. Now, suppose that for every ordinal number $\alpha<\beta$, if $B\in \mathcal B^\alpha$ then $B^c\in \mathcal B^{\alpha+1}$. Let $B=\bigcup_{n=1}^\infty B^{\alpha_n}_n\in \mathcal B^\beta$, where $B^{\alpha_n}_n\in\mathcal B^\alpha_n$ and $\alpha_n<\beta$. Thus by the inductive assumption, $X\setminus B^{\alpha_n}_n\in \mathcal B^{\alpha+1}$ and therefore $B^c=\bigcap_{n=1}^\infty X\setminus B^{\alpha_n}_n\in\mathcal B^{\alpha_0}\subseteq \mathcal B^{\beta+1}$, where $\alpha_0=sup\{\alpha_n : n\in\mathbb N\}\le\beta$. Similarly, if $B=\bigcap_{n=1}^\infty B^{\alpha_n}_n\in\mathcal B^\beta$, then $B^c\in\mathcal B^{\beta+1}$ and thus we are done.
\end{proof}
\begin{proposition}
Let $(X, \tau)$ be  a realcompact  topological space then  the following statements hold.
\begin{enumerate}
\item $(X, \A_z)$ is realcompact.
\item  If every closed set in $X$ is a $G_{\delta}$-set, then $(X, \mathscr B)$ is realcompact.
\end{enumerate}
\end{proposition}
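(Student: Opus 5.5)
The plan is to reduce both parts to the topological characterization \cite[Lemma 8.12]{GJ}: $X$ is realcompact if and only if every prime $z$-filter on $X$ with the countable intersection property is fixed. Let $\A$ denote $\A_z$ in (1) or $\mathscr B$ in (2). Note first that $Z(X)\subseteq\A$ in both cases. For (1) this is by definition. For (2), every zero-set is closed, hence Borel. Now let $\mathcal F$ be a real $\A$-ultrafilter on $X$. By Theorem \ref{real ideals} (together with \cite[Theorem 3.9]{ABS}, since $\A$ is a $\sigma$-algebra), $\mathcal F$ is closed under countable intersections and has the countable intersection property. Put $\mathcal F_z=\mathcal F\cap Z(X)$.

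First I would show that $\mathcal F_z$ is a prime $z$-filter with the countable intersection property. It is a $z$-filter because $\mathcal F$ is an $\A$-filter and $Z(X)\subseteq\A$. It is prime because $Z_1\cup Z_2\in\mathcal F_z$ forces $Z_1\in\mathcal F$ or $Z_2\in\mathcal F$, by ultrafilter primeness (Lemma \ref{A-ultrafilter}(2)). Countable intersection is inherited from $\mathcal F$. Since $X$ is realcompact, \cite[Lemma 8.12]{GJ} gives a point $p\in\bigcap\mathcal F_z$.

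The key step is to show $\{p\}\in\mathcal F$, or at least that $p$ lies in every member of $\mathcal F$. This is the main obstacle, since the members of $\mathcal F$ are general sets of the $\sigma$-algebra, not zero-sets.

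For (1), I would use the preceding lemma with $\mathcal B=Z(X)$ and argue by transfinite induction on $\alpha<\omega_1$. The induction claim is: for every $B\in\mathcal B^\alpha$, $B\in\mathcal F$ implies $p\in B$. Equivalently, it suffices to show $\{p\}\in\mathcal F$, or more cleanly that $\mathcal G=\{A\in\A : p\in A\}$ contains $\mathcal F$. Both $\mathcal F$ and $\mathcal G$ are $\A$-ultrafilters closed under countable intersections, so it is enough to check they agree on a generating family. I will verify the standard fact that the family $\mathcal D=\{A\in\A : A\in\mathcal F \iff A\in\mathcal G\}$ contains $Z(X)$ and is a $\sigma$-algebra. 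Closure under complements follows from Lemma \ref{A-ultrafilter}(2) for both ultrafilters. For countable unions, use De Morgan together with countable completeness of both ultrafilters. Explicitly, $\bigcap A_n\in\mathcal F$ iff every $A_n\in\mathcal F$ iff every $A_n\in\mathcal G$ iff $\bigcap A_n\in\mathcal G$, so $\mathcal D$ is closed under countable intersections and hence under countable unions. It remains to see $Z(X)\subseteq\mathcal D$. If $Z\in\mathcal F$, then $p\in Z$. Conversely, if $Z\notin\mathcal F$, then $Z^c\in\mathcal F$. Since $Z^c$ is a cozero-set, $Z^c=\bigcup_n Z_n$ with $Z_n\in Z(X)$. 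Countable completeness (primeness for countable unions, which follows from closure under countable intersections of complements) gives some $Z_n\in\mathcal F$. Hence $p\in Z_n\subseteq Z^c$, so $p\notin Z$. Thus $\mathcal D=\A_z$, and $\mathcal F=\mathcal G$ is fixed.

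For (2), the same argument runs with $\mathcal B=\tau$. Closed sets must lie in $\mathcal D$. If a closed set $F$ is in $\mathcal F$, I need $p\in F$. Writing $F=\bigcap_n U_n$ with $U_n$ open, it suffices to handle open sets. If an open set $U\in\mathcal F$ and $p\notin U$, I must derive a contradiction. Here I would use that each closed $X\setminus U$ is a $G_\delta$, so $U$ is $F_\sigma$. The point $p$ has a zero-set neighbourhood, and I would try to show $U$ is a countable union of zero-sets, so that the cozero argument above applies. This is the delicate point. The fallback is to note that in the stated hypothesis (perfect spaces) every open set is a countable union of closed sets, and each closed set $C\in\mathcal F$ satisfies $p\in C$: otherwise pick a zero-set $Z\ni p$ disjoint from $C$, so $Z\in\mathcal F_z\subseteq\mathcal F$ and $Z\cap C=\emptyset$, a contradiction. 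This handles closed sets directly. Open sets then follow via countable completeness on the $F_\sigma$ decomposition, and $\mathcal D\supseteq\tau$ gives $\mathcal D=\mathscr B$.
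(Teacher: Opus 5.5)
Your proof is correct. Its skeleton matches the paper's: pass to $\mathcal F_z=\mathcal F\cap Z(X)$, fix it at a point $p$ using realcompactness of $X$, then propagate to all of $\mathcal F$. The two technical steps, however, are done differently.

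\textbf{How the paper does it.} The paper first proves that $\mathcal F_z$ is a $z$-ultrafilter, via the sets $f^{-1}[-\frac1n,\frac1n]$. This gives $Z\in\mathcal F\iff p\in Z$ for zero-sets. It then runs a transfinite induction along the hierarchy $\mathcal B^\alpha$ of the preceding lemma, with cozero-sets as the base case.

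\textbf{How you do it.} You obtain $Z\in\mathcal F\iff p\in Z$ directly: a cozero-set is a countable union of zero-sets, and $\mathcal F$ is countably complete. You then replace the induction by the good-sets principle. The class of $A$ on which $\mathcal F$ and the fixed ultrafilter at $p$ agree is a $\sigma$-algebra, because both are countably complete ultrafilters, and it contains the generators.

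\textbf{What each buys.} Your route is shorter and self-contained, and it does not need the Borel-hierarchy lemma at all. It also already shows that $\mathcal F_z$ is the fixed $z$-ultrafilter at $p$, so the paper's maximality step is redundant.

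\textbf{Part (2).} Drop your tentative first idea of writing an open set as a countable union of zero-sets. It is not supplied by the hypothesis (it would follow under perfect normality, for instance), and you do not need it. Your fallback is exactly the paper's argument:
\begin{enumerate}
\item Closed members of $\mathcal F$ contain $p$, by separating $p$ from a closed set with a zero-set.
\item Open sets in $\mathcal F$ contain $p$, by their $F_\sigma$ decomposition and countable completeness.
\end{enumerate}
Make two small steps explicit there. First, the implication ``$p\in Z\Rightarrow Z\in\mathcal F$'' for zero-sets is the $Z(X)\subseteq\mathcal D$ argument from (1) rerun inside $\mathscr B$. This works because cozero-sets are Borel. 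Second, for open $U$, the reverse implication $p\in U\Rightarrow U\in\mathcal F$ follows from the closed case applied to $X\setminus U$. With these, $\tau\subseteq\mathcal D$ and hence $\mathcal D=\mathscr B$.
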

\begin{proof}
\begin{enumerate}
\item Let $\mathcal F$ be a real $\A_z$-ultrafilter on $X$ and define 
$$ \mathcal F_z:=\{Z\in Z(X) : Z\in\mathcal F \}=\mathcal F\cap Z(X).$$
First, since $Z(X)\subseteq \A_z$ and zero-sets are closed under countable intersection, it is easy to see that $\mathcal F_z$ is a prime  $z$-filter on $X$ which is closed under countable intersection and thus it is fixed, by \cite[Lemma 8.12]{GJ}.

 Next, we show that $\mathcal F_z$ is a $z$-ultrafilter. To this end, suppose that  $Z_0=Z(f)\in Z(X)$ and $Z_0\cap Z\ne\emptyset$, for every $Z\in\mathcal F_z$. We claim that $Z_n=f^{-1}[\frac{-1}{n}, \frac{1}{n}]\in \mathcal F_z$, for every $n\in\mathbb N$. If it is not the case, then $C=X\setminus f^{-1}[\frac{-1}{m}, \frac{1}{m}]\in  \mathcal F$, for some $m\in\mathbb N$; see Lemma \ref{A-ultrafilter}(2). It follows that, the zero-set $D=X\setminus f^{-1}(\frac{-1}{m+1}, \frac{1}{m+1})$ belongs to  $\mathcal F$ and hence to $\mathcal F_z$, as $C\subseteq D$. But $Z_0\cap D=\emptyset$, which is a contradiction. This proves the claim, and since $\mathcal F_z$ is closed under countable intersection, then $Z_0=\bigcap_{n=1}^\infty Z_n\in \mathcal F_z$. Therefore, $\mathcal F_z$ is a $z$-ultrafilter, by the argument after \cite[Theorem 2.6]{GJ}. Now, as $\mathcal F_z$ is a fixed $z$-ultrafilter, if   $x\in\bigcap\mathcal F_z$ then a zero-set $Z$, belongs to $\mathcal F_z$ if and only if $x\in Z$.

 Finally, we show that $x$ belongs to any member of $\mathcal F$, which means that $\mathcal F$ is fixed and then we are through. Using the preceding lemma, $\A_z=\bigcup_{\alpha<\omega_1}\mathcal B^\alpha$, thus we prove the last statement by the principle of transfinite induction, as $\mathcal F=\bigcup_{\alpha<\omega_1}\mathcal F\cap\mathcal B^\alpha$. To show that $x$ belongs to every member of $\mathcal F\cap \mathcal B^0$, clearly it is enough to prove that it  belongs to every cozero-set of $X$, in $\mathcal F$. To let this end, suppose that  $D=X\setminus Z\in\mathcal F$, for some zero-set $Z\in Z(X)$. This implies that $Z\notin\mathcal F_z$ and since $\mathcal F_z$ is a $z$-ultrafilter, we conclude that $x\notin Z$, i.e., $x\in D$.

 Now, suppose that $x$ belongs to every member of $\mathcal F\cap \mathcal B^\alpha$, for any ordinal number $\alpha<\beta$. If $B=\bigcup_{n=1}^\infty B^{\alpha_n}_n\in\mathcal F\cap\mathcal B^\beta$, then $B^{\alpha_m}_m\in\mathcal F$, for some $m\in\mathbb N$, as $\mathcal F$ is closed under countable intersection, by \cite[Theorem 3.9]{ABS}. Thus $x\in B^{\alpha_m}_m\subseteq B$, by the inductive assumption. On the other hand, if $B=\bigcap_{n=1}^\infty B^{\alpha_n}_n\in\mathcal B^\beta$, then $B^{\alpha_n}_n\in\mathcal F$ for every $n\in\mathbb N$, as $\mathcal F$ is an $\A_z$-filter. Hence $x\in \bigcap_{n=1}^\infty B^{\alpha_n}_n=B$, by the inductive assumption and we are done.
\item  Let $\mathcal F$ be a real $\mathscr B$-ultrafilter on $X$ and define $ \mathcal F_z=\mathcal F\cap Z(X)$. 
Since every zero-set is a Borel set,  a similar argument as in part (1), shows that $\mathcal F_z$ is a fixed real $z$-ultrafilter. Thus, if $x\in \bigcap\mathcal F_z$, a zero-set $Z\in Z(X)$ belongs to $\mathcal F_z$ (and hence to $\mathcal F$) if and only if $x\in Z$. Now, using this fact and complete regularity of $X$ which turns the zero-sets of $X$ into a base for the closed sets of $X$ (by \cite[Theorem 3.2]{GJ}) we conclude that $x$ belongs to every closed  member  of $\mathcal F$. On the other hand, if $U$ is an open subset of $X$, then by the assumptions  $U=\bigcup_{n=1}^\infty F_n$   is an $F_\sigma$-set, i.e., a countable union of closed sets. Therefore, by the fact that $\mathcal F$ is closed under countable intersection, if $U\in\mathcal F$ then there is some $m\in\mathbb N$ such that $F_m\in\mathcal F$ and hence $x\in F_m\subseteq U$. This implies that  $x$ belongs to every member of $ \mathcal F\cap \mathcal B^0$, where $\mathcal B^0=\tau\cup\tau^c$; see the preceding lemma. The rest of the proof is similar to the argument of part (1). 
\end{enumerate}
\end{proof}
Now, using \cite[5.5.10(b)]{E} and the previous proposition we deduce the following result which provides a wide class of elementary realcompact spaces.
\begin{corollary}
Every paracompact space $X$ (in particular every metric space) with a nonmeasurable cardinal whose every closed subset is a $G_\delta$-set, is a realcompact measurable space with respect to the  Borel sets.
\end{corollary}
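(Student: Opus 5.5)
The plan is to reduce the corollary to part (2) of the preceding proposition, which says that if $(X,\tau)$ is a realcompact topological space in which every closed set is a $G_\delta$-set, then $(X,\mathscr B)$ is a realcompact measurable space. Two things remain to be checked: that $X$ is realcompact as a topological space, and that the $G_\delta$ hypothesis holds.

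\textbf{Step 1 (topological realcompactness).} By \cite[5.5.10(b)]{E}, every paracompact space whose cardinal is nonmeasurable is realcompact. More precisely, paracompact spaces are topologically complete, and a topologically complete space of nonmeasurable cardinal is realcompact; this is also the Shirota theorem as stated in \cite[Chapter 15]{GJ}. So under the stated hypotheses, $(X,\tau)$ is a realcompact Tychonoff space.

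\textbf{Step 2 (the $G_\delta$ condition).} For a general paracompact space this is an explicit hypothesis of the corollary, so nothing needs to be proved. For the parenthetical case of a metric space $(X,d)$, I will use Stone's theorem that metric spaces are paracompact. Every closed set $F$ is a $G_\delta$, since
$$F=\bigcap_{n=1}^{\infty}\{x\in X : d(x,F)<\tfrac1n\}.$$
Hence metric spaces of nonmeasurable cardinal meet all the hypotheses.

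\textbf{Step 3.} Apply part (2) of the preceding proposition to conclude that $(X,\mathscr B)$ is a realcompact measurable space, where $\mathscr B=<\tau>$ is the Borel $\sigma$-algebra.

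The only step that is not immediate is Step 1, which relies on the external result \cite[5.5.10(b)]{E}. I will simply cite it, checking only that its hypothesis is nonmeasurability of the cardinal of $X$ itself, so that no further cardinality condition is needed. The rest follows directly from earlier results.
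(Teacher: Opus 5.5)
Your proposal is correct and follows the paper's own route: the paper likewise gets topological realcompactness from Engelking's 5.5.10(b) (paracompact of nonmeasurable cardinal) and then applies part (2) of the preceding proposition. Your check that metric spaces are paracompact (Stone) and have every closed set a $G_\delta$ only spells out what the paper leaves implicit in its parenthetical.
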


\section{Maximal ring of quotients of rings of elementary functions}
In this section, we provide a  description of the maximal ring of quotients of $M(X, \A)$ similar to that of the ring of continuous functions on a topological space. In particular we show that, the maximal ring of quotients of $M(X, \A)$ and $C(X)$ coincide, whenever $(X, \A)$ is a measurable space.
 
Let $R$ be a commutative   semi-prime ring, i.e., it has no nilpotent element except $0$. Recall from \cite[Theorem 1.5]{FGL}, that   a ring $S\supseteq R$ is called  a ring of quotients or a rational extension of $R$ provided that, for any  $0\ne b\in S$ there exists $a\in R$  such that $0\ne ba \in R$. Also, a ring is called {\it rationally complete} if it has no proper rational extension. Any commutative ring $R$  has a largest ring of quotients, named the maximal ring of quotients of $R$ which is denoted by $\mathcal Q(R)$, and is  rationally complete; see \cite[Corollary 1.9]{FGL}.

In the case of the ring of continuous functions, $C(X)$, on a completely regular Hausdorff space $X$, it has been shown that $\mathcal Q(C(X))$, which is denoted by $Q(X)$, is the direct limit of rings of continuous functions on dense open subsets
$$\mathcal Q(C(X))=\lim_{\substack{\longrightarrow \\ V\in\mathcal V_0(X)}}C(V),$$
where $\mathcal V_0(X)$ is the family of all dense open subsets of $X$; see \cite[Theorem 2.6]{FGL}. The direct limit, $\lim_{\longrightarrow \\ V\in\mathcal V_0(X)}C(V)$, may be thought of as the free union $\bigcup_{ V\in\mathcal V_0(X)} C(V)$ along with an equivalence relation, where $f_1\in C(V_1)$ and $f_2\in C(V_2)$ identified whenever $f_1$ and $f_2$ agrees on $V_1\cap V_2$. The following proposition, introduces some rings of quotients of $M(X, \A)$. First, remember that a function $f\in C(X)$ is called locally constant if for any $x\in X$ there is an open neighborhood $V$ of $x$ on which $f$ is constant.

\begin{proposition}\label{M is quotient of M*}
Let $(X, \A)$ be an elementary space. Then the following statements hold.
\begin{enumerate}
\item $M(X, \A)$ is a ring of quotients of $M^*(X, \A)$.
\item $M(X, \A)$ and $M^*(X, \A)$ have the same maximal ring of quotients. 
\item For any dense open subspace $V$ of $(X, \tau_{\A})$
$$C_L(V):=\{f\in C(V) : f \text{ is locally constant} \},$$
is a ring of quotients of $M(X, \A)$.
\end{enumerate}
\end{proposition}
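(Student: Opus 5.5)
The plan is to verify the defining condition of a ring of quotients from \cite[Theorem 1.5]{FGL} directly in each part: for every nonzero $b$ in the larger ring, find $a$ in the smaller ring with $0\ne ba$ in the smaller ring. First I note that all rings involved are semiprime, since they consist of real-valued functions under pointwise operations.

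For (1), the inclusion $M^*(X,\A)\subseteq M(X,\A)$ is clear. Given $0\ne f\in M(X,\A)$, I take $g=\frac{1}{1+|f|}$. This is a continuous function of $f$, so it is $\A$-measurable, exactly as in the proof of Lemma \ref{units}. Moreover $0<g\le 1$, so $g\in M^*(X,\A)$. Then $fg=\frac{f}{1+|f|}$ is bounded by $1$ and is $\A$-measurable. It is nonzero, since $g$ never vanishes and $f\ne0$. Hence $0\ne fg\in M^*(X,\A)$, as required.

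For (2), I invoke the standard fact from \cite{FGL} that if $S$ is a ring of quotients of $R$, then every ring of quotients of $S$ is one of $R$, so $\mathcal Q(S)=\mathcal Q(R)$. This is transitivity of rational extensions together with the maximality in \cite[Corollary 1.9]{FGL}. Applying it to $R=M^*(X,\A)$ and $S=M(X,\A)$ via part (1) gives the claim.

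For (3), I first have to settle how $M(X,\A)$ sits inside $C_L(V)$. Restriction $f\mapsto f|_V$ is injective because $V$ is dense in the Hausdorff space $(X,\tau_{\A})$. However, a general elementary function need not be locally constant (compare the half-open-interval algebra example). So I would interpret the statement inside the common overring $\mathcal Q(C(X,\tau_{\A}))=\lim_{\longrightarrow}C(V)$. There, $M(X,\A)$ and $C_L(V)$ both embed, and the claim is that every nonzero element of $C_L(V)$ can be multiplied into a nonzero element of $M(X,\A)$. Making this identification precise is the step I expect to be the main obstacle; the computation itself is easy.

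Given $0\ne h\in C_L(V)$, I pick $x\in V$ with $h(x)=c\ne0$ and an open neighborhood $W\subseteq V$ of $x$ on which $h\equiv c$. Since $\A$ is a base for $\tau_{\A}$ and $V$ is open, there is $A\in\A$ with $x\in A\subseteq W$. Then $\chi_A\in M(X,\A)$, and on $V$ we have $h\chi_A=c\,\chi_A|_V$. This is the restriction of $c\chi_A\in M(X,\A)$, and it is nonzero at $x$. Hence $0\ne h\chi_A\in M(X,\A)$, which is exactly the ring-of-quotients condition.
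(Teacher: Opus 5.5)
Your parts (1) and (2) are correct and match the paper; it uses $1/(1+g^2)$ where you use $1/(1+|f|)$, and either is a unit by Lemma \ref{units}. Your $\chi_A$ computation in (3) is also the paper's. The gap is the step before that computation. You claim that an elementary function need not be locally constant, so you never show that $M(X,\A)$ sits inside $C_L(V)$, and you replace (3) by a statement inside the common overring $\mathcal Q(C(X,\tau_{\A}))$. That reformulation does not prove (3). A ring of quotients of $R$ is by definition an extension ring $S\supseteq R$, and your multiplication condition says nothing about containment. In fact, if your claim were true, $M(X,\A)$ would not be contained in $C_L(X)$, and (3) would already fail for $V=X$.

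The claim is false, and the missing idea is one line, which the paper uses. For $f\in M(X,\A)$ and $r\in\mathbb R$, we have $f^{-1}(r)=Z(f-r)\in\A$, because zero-sets of elementary functions are elementary (the remark before Lemma \ref{units}). Every member of $\A$ is $\tau_{\A}$-open. So each fiber of $f$ is an open neighbourhood of its points on which $f$ is constant. Hence every elementary function is locally constant on $(X,\tau_{\A})$, and $f|_V\in C_L(V)$. Combined with your correct observation that restriction to the dense set $V$ is injective, this makes $M(X,\A)$ a subring of $C_L(V)$, and your computation then finishes the proof as in the paper.

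The half-open-interval example does not contradict this. The identity there is continuous on $(X,\tau_{\A})$ but is not $\A$-measurable, since its zero-set $\{0\}$ is not in $\A$. That example separates $M(X,\A)$ from $C(X,\tau_{\A})$; it does not separate elementary functions from locally constant ones.
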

\begin{proof}
\begin{enumerate}
\item If $0\ne g\in M(X, \A)$, then $1+g^2$ is a unit element of $M(X, \A)$, by Lemma \ref{units}. Hence $f=1/(1+g^2)\in M^*(X, \A)$ and $0\ne gf\in M^*(X, \A)$.
\item This part follows from (1).
\item First note that, it is easy to see that $C_L(V)$ is a subring of $C(V)$. Next,
 since for any $f\in M(X, \A)$ and  $r\in\mathbb R$, $f^{-1}(r)\in\A$, then $M(X, \A)\subseteq C_L(V)$. Now, let $0\ne f\in C_L(V)$. Thus there is $p\in V$ such that $f(p)=r\ne0$. Since  $V\subseteq X$ is a open, there is an elementary set $A_0\in \A$ such that $p\in A_0\subseteq V$. On the other hand, there is $A_1\in\A$ such that $f|_{A_1}=r$, as $f$ is locally constant. Thus, $0\ne f\chi_{A_0\cap A_1}=r\chi_{A_0\cap A_1}\in M(X, \A)$, i.e., $C_L(V)$ is a ring of quotients of $M(X, \A)$.
\end{enumerate}
\end{proof}
Before  giving the description of the maximal ring of quotients of $M(X, \A)$, we recall from \cite[1.3]{FGL}, that an ideal $D$ of a ring $R$ is said to be (rationally) dense if its only annihilator in $R$ is $0$. Also, from \cite[Corollary 1.9]{FGL}, for any ring $R$, we have 
$$\mathcal Q(R)=\lim_{\substack{\longrightarrow \\ D\in\mathcal D_0(R)}}\text{Hom\,}D,$$
where $\mathcal D_0(R)$ is the family of all dense ideals of $R$ and $\text{Hom\,}D$ is the ring of all $R$-module homomorphisms  $\varphi :D\to R$. In the case of rings of continuous functions, an ideal $D$ of $C(X)$ is dense if and only if $\text{coz\,}D:=\bigcup_{f\in D}\text{coz\,}f$ is a dense (open) subset of $X$; see \cite[Theorem 2.2]{FGL}. Indeed, the same fact holds in $M(X, \A)$, as the same proof of the aforementioned theorem carries over without modification. We need the following lemma as well.
\begin{lemma}\label{Hom}
If $D$ is a dense ideal of $M(X, \A)$, then $\text{Hom\,}D\subseteq C_L(\text{coz\,}D)$.
\end{lemma}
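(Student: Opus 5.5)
Given $\varphi\in\text{Hom\,}D$, I will build a function $\tilde\varphi$ on $V=\text{coz\,}D$ and show that it is continuous and locally constant. The inclusion is understood via the usual identification, as in \cite[Theorem 2.6]{FGL}: $\varphi$ is identified with the $\tilde\varphi$ satisfying $\varphi(g)=\tilde\varphi g$ on $V$ for all $g\in D$.

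The key observation is that $D$ contains characteristic functions. If $f\in D$ and $x\in\text{coz\,}f$, choose $A\in\A$ with $x\in A\subseteq \text{coz\,}f$; this is possible since $\text{coz\,}f\in\A$, so we may simply take $A=\text{coz\,}f$. By Lemma \ref{multiple elements}, $Z(f)\subseteq Z(\chi_A)$ forces $\chi_A=hf$ for some $h\in M(X,\A)$, so $\chi_A\in D$. Consequently the sets $A\in\A$ with $\chi_A\in D$ cover $V$, and every such $A$ is contained in $V$.

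For each such $A$ put $u_A=\varphi(\chi_A)\in M(X,\A)$. Module linearity gives $\chi_A u_A=\varphi(\chi_A\chi_A)=\varphi(\chi_A)=u_A$, so $u_A$ vanishes off $A$. Define $\tilde\varphi(x)=u_A(x)$ for $x\in A$. This is well defined: if $x\in A\cap B$, then
\[\chi_B u_A=\varphi(\chi_A\chi_B)=\chi_A u_B,\]
and evaluating at $x$ gives $u_A(x)=u_B(x)$. On each open set $A$ the function $\tilde\varphi$ equals the restriction of $u_A\in M(X,\A)$, which is continuous. Moreover $u_A$ is locally constant, since its fibres $u_A^{-1}(r)$ lie in $\A$ and are therefore open; this is the same observation used in Proposition \ref{M is quotient of M*}(3). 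Hence $\tilde\varphi\in C_L(V)$.

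Finally I will check that $\tilde\varphi$ represents $\varphi$, that is, $\varphi(g)=\tilde\varphi g$ on $V$ for each $g\in D$. At $x\in A$ we have
\[\chi_A\varphi(g)=\varphi(\chi_A g)=g\,u_A,\]
and evaluating at $x$ gives $\varphi(g)(x)=g(x)\tilde\varphi(x)$. Distinct homomorphisms give distinct $\tilde\varphi$, because $V$ is covered by the sets $A$. So $\varphi\mapsto\tilde\varphi$ is an injection of $\text{Hom\,}D$ into $C_L(\text{coz\,}D)$, which is the asserted inclusion.

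The main subtlety is the well-definedness and the representation step. Both rest on producing idempotents $\chi_A$ inside $D$, which is exactly where Lemma \ref{multiple elements} is needed. The rest is routine.
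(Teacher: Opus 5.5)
Your argument is correct, and it arrives at the same function as the paper by a somewhat different route.

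The paper works with an arbitrary $d\in D$ satisfying $d(x)\ne0$ and sets $g(x)=\varphi(d)(x)/d(x)$. Well-definedness comes from $d'\varphi(d)=d\varphi(d')$. Local constancy is then checked by hand, by intersecting the three elementary sets $\text{coz\,}d$, $d^{-1}(r_1)$ and $\varphi(d)^{-1}(r_0r_1)$.

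You instead first use Lemma \ref{multiple elements}, which in effect is the von Neumann regularity of $M(X,\A)$, to replace each $d$ by the idempotent $\chi_{\text{coz\,}d}\in D$. Since $d\,\varphi(\chi_{\text{coz\,}d})=\varphi(d)$, your $u_A$ coincides on $A=\text{coz\,}d$ with the paper's quotient $\varphi(d)/d$. The gain is that no division is needed. Also, $\tilde\varphi$ is locally the restriction of a genuine element of $M(X,\A)$, so continuity and local constancy follow at once from the fact that fibres of elementary functions lie in $\A$.

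The paper's version mirrors the classical proof for $C(X)$ and does not need Lemma \ref{multiple elements}. Yours is shorter and more transparent, but it relies on the regularity peculiar to $M(X,\A)$.

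One small point needs tightening. Your injectivity remark, ``because $V$ is covered by the sets $A$'', only controls $\varphi(g)$ on $V$. You should add that $\varphi(g)=\varphi(\chi_{\text{coz\,}g}\,g)=\chi_{\text{coz\,}g}\,\varphi(g)$ vanishes off $V$. Alternatively, invoke the density of $V$ together with the continuity of $\varphi(g)$. The paper leaves this identification implicit as well.
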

\begin{proof}
Let $\varphi\in\text{Hom\,}D$. If $x\in\text{coz\,}D=:V$, choose an elementary function $d\in D\subseteq M(X, \A)$ such that $d(x)\ne0$ and define
$$g(x)=\frac{\varphi(d)(x)}{d(x)}.$$
Whenever $d'\in D$ is another elementary function for which $d'(x)\ne0$, since $\varphi$ is a $M(X, \A)$-module homomorphism, then $d'\varphi(d)=d\varphi(d')$. This shows that, $g$ is a well-defined real-valued function on $\text{coz\,}D$. To prove that $g\in C_L(V)$, let $p\in V$ and suppose that $g(p)=\frac{\varphi(d)(p)}{d(p)}=r_0$, for some suitable $d\in D$. Moreover let, $A_0=\text{coz\,}d$, $A_1=d^{-1}(r_1)$ and $A_2=\varphi(d)^{-1}(r_0r_1)$, where $r_1=d(p)\ne0$. Since both  $d$ and $\varphi(d)$  belong to $M(X, \A)$, we conclude that $A_0, A_1$ and $ A_2$ are elementary sets, see the argument before Lemma \ref{units}. Hence, $p\in A_0\cap A_1\cap A_2\subseteq V$ and $g(x)=\frac{\varphi(d)(x)}{d(x)}=r_0$, for any $x\in A_0\cap A_1\cap A_2$.
Now, if $d\in  D$, for any $x\in V$ and a suitable choice of $d'\in D$ we have
$$\varphi(d)(x)=\frac{\varphi(d')(x)}{d'(x)}d(x)=g(x). d(x),$$
which means $\varphi(d)=g.d$. This  shows that $\varphi=g\in C_L(V)$.
\end{proof}
Now,  Proposition \ref{M is quotient of M*}, Lemma \ref{Hom} and the argument before that, yields the following description of $\mathcal Q(M(X, \A))$.
\begin{corollary}\label{Q(M(X))}
Let $(X, \A)$ be an elementary space, then
$$\mathcal Q(M(X, \A))=\lim_{\substack{\longrightarrow \\ V\in\mathcal V_0(X)}}C_L(V),$$
where $\mathcal V_0(X)$ is the family of all dense open subsets of $X$.
\end{corollary}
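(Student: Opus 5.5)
The plan is to use the description $\mathcal Q(R)=\varinjlim_{D\in\mathcal D_0(R)}\text{Hom\,}D$ from \cite[Corollary 1.9]{FGL}, with $R=M(X, \A)$, and to identify this direct limit with $\varinjlim_{V\in\mathcal V_0(X)}C_L(V)$. There are two directions. First, every element of $\mathcal Q(M(X,\A))$ is represented by a locally constant function on a dense open set. Second, every such function gives an element of $\mathcal Q(M(X,\A))$, and the identifications on the two sides agree.

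For the first direction, take a dense ideal $D$. By the analogue of \cite[Theorem 2.2]{FGL}, $V=\text{coz\,}D$ is a dense open subset of $(X,\tau_{\A})$. Lemma \ref{Hom} then shows that each $\varphi\in\text{Hom\,}D$ is multiplication by some $g\in C_L(V)$, so we get a map $\text{Hom\,}D\to C_L(\text{coz\,}D)$, $\varphi\mapsto g$. We must check that it respects the identifications of the direct limits. If $D'\subseteq D$, then restricting $\varphi$ to $D'$ gives $g|_{\text{coz\,}D'}$. Conversely, suppose two homomorphisms give functions $g_1,g_2$ that agree on a dense open set. Then $g_1 d=g_2 d$ for all $d$ in the intersection of the two ideals, which is again dense. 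Hence the induced ring map $\mathcal Q(M(X,\A))\to\varinjlim C_L(V)$ is a well-defined injective homomorphism.

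For the second direction, fix $V\in\mathcal V_0(X)$ and $g\in C_L(V)$. Let
$$D_g=\{d\in M(X,\A) : \text{coz\,}d\subseteq V \text{ and } gd \text{ extends by } 0 \text{ to a member of } M(X,\A)\}.$$
For each $p\in V$ choose $A\in\A$ with $p\in A\subseteq V$ and $g$ constant on $A$. Then $\chi_A\in D_g$, so $\text{coz\,}D_g\supseteq V$ is dense and $D_g$ is a dense ideal. Multiplication by $g$ is an $M(X,\A)$-homomorphism $D_g\to M(X,\A)$, and by construction Lemma \ref{Hom} sends it back to $g$. So the map is onto, and the two direct limits coincide. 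Alternatively, Proposition \ref{M is quotient of M*}(3) shows that each $C_L(V)$ is a ring of quotients of $M(X,\A)$, so it embeds in $\mathcal Q(M(X,\A))$; I would use this to confirm that the union of the $C_L(V)$ is a rational extension lying inside $\mathcal Q$.

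The main obstacle is checking that $D_g$ is an ideal and that $gd$ really is $\A$-measurable. The issue is that $g$ is only locally constant on $V$, and $\text{coz\,}d$ might not be covered by finitely many elementary sets on which $g$ is constant. I would try to get around this by defining $D_g$ as the ideal generated by the $\chi_A$ with $A\in\A$, $A\subseteq V$ and $g|_A$ constant. Each element of this ideal is supported in a finite union of such sets, so on its support $g$ takes only finitely many values on elementary pieces, and $gd$ lies in $M(X,\A)$. Closure under addition and under multiplication by $M(X,\A)$ is then immediate.
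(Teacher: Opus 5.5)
Your proposal is correct and follows the paper's route. The paper combines the description of $\mathcal Q(R)$ as the direct limit of the rings $\text{Hom\,}D$ over dense ideals $D$, the characterization of dense ideals through $\text{coz\,}D$, Lemma \ref{Hom} for one inclusion and Proposition \ref{M is quotient of M*}(3) for the other; your explicit dense ideal $D_g$, together with your checks that the identifications match, just spells out that second inclusion. Your first definition of $D_g$ is already an ideal, because the condition that $gd$ extend by $0$ to a member of $M(X,\A)$ is preserved under sums and under multiplication by elements of $M(X,\A)$. So the workaround in your last paragraph is unnecessary, though harmless.
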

Whenever $(X, \A)$ is a measurable space, Lemma \ref{Z(X)=B(X)} implies that $C_L(V)=C(V)$, for any dense open subset $V$ of $X$. Thus, in this case, the preceding corollary  led to a simple description of the maximal ring of quotients of $M(X, \A)$.

\begin{corollary}\label{Max quotient of M}
If $(X, \A)$ is a measurable space, then $\mathcal Q(M(X, \A))=\mathcal Q(X)$.
\end{corollary}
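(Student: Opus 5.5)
The plan is to read off the statement from Corollary \ref{Q(M(X))} together with the $P$-space property of Lemma \ref{Z(X)=B(X)}, and then compare with the Fine--Gillman--Lambek description $\mathcal Q(X)=\lim_{\longrightarrow} C(V)$, $V\in\mathcal V_0(X)$, taken for the Tychonoff space $(X,\tau_{\A})$. Since both maximal rings of quotients are direct limits over the same directed family $\mathcal V_0(X)$ of dense open subsets of $(X,\tau_{\A})$, with the same restriction maps and the same identification of germs, it suffices to show that $C_L(V)=C(V)$ for every $V\in\mathcal V_0(X)$. The two limits then coincide as rings.

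First, $C_L(V)\subseteq C(V)$ holds by definition. For the reverse inclusion, fix $V\in\mathcal V_0(X)$, $f\in C(V)$ and $p\in V$. I will show that $f$ is constant on a neighborhood of $p$.

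\begin{enumerate}
\item By Lemma \ref{Z(X)=B(X)}, $(X,\tau_{\A})$ is a $P$-space.
\item Since $V$ is open and $\A$ is a base, there is $A_0\in\A$ with $p\in A_0\subseteq V$.
\item The set $A_0$ is clopen in $X$, so $\A|_{A_0}$ is a $\sigma$-algebra on $A_0$ generating the subspace topology. Applying Lemma \ref{Z(X)=B(X)} to the reduced measurable space $(A_0,\A|_{A_0})$, the space $A_0$ is itself a $P$-space.
\item The restriction $f|_{A_0}$ is continuous, so $Z(f|_{A_0}-f(p))$ is a zero-set of $A_0$ containing $p$. By the $P$-space property it is open.
\item Hence $f$ is constant on an open neighborhood of $p$ inside $V$, so $f\in C_L(V)$.
\end{enumerate}

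One could instead try to show that an open subspace of a $P$-space is a $P$-space, but that is unnecessary here. The main (mild) obstacle is exactly this passage from $X$ to $V$: Lemma \ref{Z(X)=B(X)} speaks only about zero-sets of $X$, while $f$ lives on $V$. Shrinking to the clopen elementary set $A_0$ avoids the issue. Alternatively, one can repeat the proof of Lemma \ref{Z(X)=B(X)} directly: choose $A_n\in\A$ with $p\in A_n\subseteq A_0\cap f^{-1}(f(p)-\frac1n,f(p)+\frac1n)$. Then $\bigcap_n A_n\in\A$ is a neighborhood of $p$ on which $f\equiv f(p)$.

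With $C_L(V)=C(V)$ for all $V\in\mathcal V_0(X)$, Corollary \ref{Q(M(X))} gives
$$\mathcal Q(M(X,\A))=\lim_{\substack{\longrightarrow \\ V\in\mathcal V_0(X)}}C_L(V)=\lim_{\substack{\longrightarrow \\ V\in\mathcal V_0(X)}}C(V)=\mathcal Q(X),$$
where the last equality is \cite[Theorem 2.6]{FGL} applied to the completely regular space $(X,\tau_{\A})$.
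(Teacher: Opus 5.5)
Your proof is correct and takes the same route as the paper: the paper also applies Corollary \ref{Q(M(X))}, notes that Lemma \ref{Z(X)=B(X)} forces $C_L(V)=C(V)$ for every dense open $V$, and then compares with the Fine--Gillman--Lambek description of $\mathcal Q(X)$. The only difference is that you spell out a step the paper leaves implicit, namely how the $P$-space property of $X$ applies to functions defined only on $V$; you handle it cleanly by shrinking to a clopen elementary neighborhood $A_0\subseteq V$, or alternatively by rerunning the countable-intersection argument.
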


\begin{corollary}
If an algebra $\A$ in a set $X$ contains all  one-point subsets  of $X$, then $\mathcal Q((M(X, \A))=\mathbb R^X$. 
\end{corollary}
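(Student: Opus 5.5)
We show that $\tau_{\A}$ is discrete and then combine Corollary \ref{Q(M(X))} with the known maximal ring of quotients of $C(X)$ for a discrete $X$. Since $\{x\}\in\A$ for every $x\in X$ and $\A$ is a base for $\tau_{\A}$, every singleton is open. Hence $(X,\tau_{\A})$ is discrete.

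Next we identify the dense open sets. In a discrete space every subset is closed, so a dense subset $V$ satisfies $V=\text{cl}_XV=X$. Thus $\mathcal V_0(X)=\{X\}$. Moreover, every function on a discrete space is continuous and locally constant, because each point has the open neighborhood $\{x\}$ on which $f$ is trivially constant. Therefore
$$C_L(X)=C(X)=\mathbb R^X.$$

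Corollary \ref{Q(M(X))} expresses $\mathcal Q(M(X,\A))$ as the direct limit of $C_L(V)$ over $V\in\mathcal V_0(X)$. Since $\mathcal V_0(X)=\{X\}$, this direct system has a single object, so the limit is simply $C_L(X)=\mathbb R^X$, as claimed.

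The only step that is not immediate is the application of Corollary \ref{Q(M(X))}, which rests on Lemma \ref{Hom} together with the characterization of dense ideals in $M(X,\A)$ via density of $\text{coz\,}D$; we take these as established earlier. As a sanity check that avoids the direct limit, one can argue directly. First, $\mathbb R^X$ is a ring of quotients of $M(X,\A)$: for $0\ne f\in\mathbb R^X$, pick $x$ with $f(x)\ne0$; then $0\ne f\chi_{\{x\}}=f(x)\chi_{\{x\}}\in M(X,\A)$. Second, $\mathbb R^X$ is von Neumann regular and self-injective, hence rationally complete, so it equals the maximal ring of quotients.
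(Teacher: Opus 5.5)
Your proposal is correct and follows the paper's proof: singletons in $\A$ make $\tau_{\A}$ discrete, so $X$ is the only dense open set, and Corollary \ref{Q(M(X))} then gives $\mathcal Q(M(X,\A))=C_L(X)=\mathbb R^X$. Your supplementary direct check is also valid, and it has the advantage of not depending on Corollary \ref{Q(M(X))}: multiplying by $\chi_{\{x\}}$ shows that $\mathbb R^X$ is a rational extension of $M(X,\A)$, and $\mathbb R^X$ is rationally complete because it is a self-injective product of fields.
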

\begin{proof}
Using the hypothesis $\{ \{x\} : x\in X\}\subseteq \A$, which implies that $\tau_{\A}=\mathcal P(X)$. Thus, by  Corollary \ref{Q(M(X))}, we conclude that
$$\mathcal Q(M(X, \A))=\lim_{\substack{\longrightarrow \\ V\in\mathcal V_0(X)}}C_L(V)= C_L(X)=\mathbb R^X,$$
as the only dense open set in the discrete space $(X, \tau_{\A})$, is $X$ itself.
\end{proof}
One may think  that, for an elementary space $(X, \A)$,
 $$\mathcal Q((M(X, \A))=\lim_{\substack{\longrightarrow \\ V\in\mathcal V_0(X)}}M(V, \A|_V),$$
where $\mathcal V_0(X)$ is the family of all dense open sets in $X$. The preceding result shows that this is not the case. In fact, assume that $X$ is a set  and $\A$ is an algebra in $X$ which contains all the singletons, but $\A\ne \mathcal P(X)$. Then $\mathcal Q((M(X, \A))=\mathbb R^X$, whereas since the only dense open subset of $X$ is $X$ itself, 
$$\lim_{\substack{\longrightarrow \\ V\in\mathcal V_0(X)}}M(V, \A|_V)=M(X, \A),$$
which is clearly  not equal to $\mathbb R^X$.
\begin{remark}
Note that, by \cite[1.2]{FGL}, the classical ring of quotients of a ring $R$ (denoted by $\mathcal Q_{\text{cl}}(R)$) is the ring  of equivalence classes of  formal quotients $r/s$, where $r\in R$ and $s$ is a non-zero-divisor of $R$. If $R$ is a regular (von-Neumann regular) ring,    since    all its non-zero-divisors are unit, we have $\mathcal Q_\text{cl}(R)=R$. Now,  Lemma \ref{multiple elements} implies  that  $M(X, \A)$ is regular and  thus we conclude that $\mathcal Q_\text{cl}(M(X, \A))=M(X, \A)$.
\end{remark}

\section{When is $M(X, \mathcal A)$ a $C(Y)$?}
In this section, we turn to  the questions posed in \cite{ABS}. In the first one, the authors asked, if $(X, \A)$ is a measurable space, does there exist a (topological) $P$-space $Y$ such that $M(X, \A)\simeq C(Y)$? In the second one, they asked, under what conditions on a $\sigma$-algebra $\A$ in a set $X$, is the ring $M(X, \A)$  isomorphic with $C(X, \tau_{\A})$? The following proposition provides an answer to \cite[Question 5.5]{ABS}.

\begin{proposition}\label{M=C(X)}
Let $(X, \A)$ be an elementary space, then the following are equivalent.
\begin{enumerate}
\item $M(X, \A)\simeq C(X, \tau_{\A})$.
\item $\A$ is a $\sigma$-algebra and $\A\simeq Z(X)$, as Boolean algebras.
\end{enumerate}
\end{proposition}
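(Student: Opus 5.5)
The plan is to read the statement through the Boolean algebras of idempotents and the lattice of zero-sets, and in (2) to use the natural identification $\A\subseteq Z(X)$, which in fact becomes an equality. Recall from the argument before Lemma \ref{units} that $\A=Z_{\A}[X]$ and that $M(X,\A)\subseteq C(X,\tau_{\A})$, so $\A\subseteq Z(X)$ always. A remark on interpretation: I will read the isomorphism $\A\simeq Z(X)$ in (2) as the identity isomorphism, i.e. $\A=Z(X)$. An abstract Boolean isomorphism does not obviously give (1), so this reading is the weakest point of the plan.

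For (2) $\Rightarrow$ (1): since $\A$ is a $\sigma$-algebra, Lemma \ref{Z(X)=B(X)} shows that $(X,\tau_{\A})$ is a $P$-space. Every zero-set is therefore clopen, and $Z(X)=\text{CO}(X)$. Under the reading above, $\A=Z(X)$, and Proposition \ref{beta_AX=beta X} ((5) $\Rightarrow$ (4)) gives $M(X,\A)=C(X,\tau_{\A})$.

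For (1) $\Rightarrow$ (2), fix a ring isomorphism $\Phi:M(X,\A)\to C(X,\tau_{\A})$. The argument has four steps.
\begin{enumerate}
\item \emph{Idempotents.} The idempotents of $M(X,\A)$ are exactly the $\chi_A$ with $A\in\A$, and those of $C(X)$ are the $\chi_U$ with $U\in\text{CO}(X)$. The idempotents of a commutative ring form a Boolean algebra under $e\wedge f=ef$ and $e'=1-e$, and $\Phi$ restricts to an isomorphism of these Boolean algebras. Hence $\A\simeq\text{CO}(X)$.
\item \emph{$P$-space.} By Lemma \ref{multiple elements}, $M(X,\A)$ is von Neumann regular (this is noted in the Remark of Section 5). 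Therefore $C(X)$ is regular, so $X$ is a $P$-space by \cite[4J]{GJ}. This gives $\text{CO}(X)=Z(X)$, and so $\A\simeq Z(X)$ as Boolean algebras.
\item \emph{$\Phi$ is isometric on bounded functions.} In both rings the nonnegative elements are exactly the squares, so $\Phi$ is order preserving. $\Phi$ fixes the rational constants, and by order preservation it fixes all real constants. Consequently $f$ is bounded with $\|f\|\le r$ if and only if $-r\le f\le r$, a purely ring-and-order condition. So $\Phi$ maps $M^*(X,\A)$ onto $C^*(X)$ isometrically in the sup norm.
\item \emph{$\A$ is a $\sigma$-algebra.} Since $C^*(X)$ is uniformly complete, step 3 shows that $M^*(X,\A)$ is uniformly complete. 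Given $A_n\in\A$, the partial sums of $\sum_n 2^{-n}(1-\chi_{A_n})$ form a Cauchy sequence in $M^*(X,\A)$. Their uniform limit $f\in M^*(X,\A)$ coincides pointwise with the series. Then
$$Z(f)=\bigcap_{n}A_n\in\A,$$
so $\A$ is closed under countable intersections, and by complements under countable unions.
\end{enumerate}

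The main obstacle is step 3: verifying that an abstract ring isomorphism preserves real constants and the order, and hence the sup norm. I would handle it via the characterization of positivity by squares, which holds in $M(X,\A)$ because square roots of nonnegative $\A$-measurable functions are $\A$-measurable, together with the usual argument that an order-preserving ring map fixing $\mathbb Q$ fixes $\mathbb R$.
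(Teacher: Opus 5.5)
Your argument for (1)$\Rightarrow$(2) is correct, and it takes a different, more elementary route than the paper. You get $\A\simeq\text{CO}(X)$ from the idempotents, $\text{CO}(X)=Z(X)$ from von Neumann regularity, and closure under countable intersections from the uniform completeness of $M^*(X,\A)$. The paper instead builds a bijection $\upsilon_{\A}X\to\upsilon X$ from real maximal ideals, and uses that $\text{cl}_{\upsilon X}$ commutes with countable intersections of zero-sets.

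The genuine gap is in (2)$\Rightarrow$(1). You prove it only when the Boolean isomorphism is the inclusion, i.e.\ when $\A=Z(X)$. But (2) only provides an abstract isomorphism $\phi:\A\to Z(X)$, and that is all your own forward direction yields. Your step 1 produces $\phi(A)=U$ where $\Phi(\chi_A)=\chi_U$, and this need not be the inclusion: even when $\A=Z(X)$, $\Phi$ may be composition with a nontrivial homeomorphism of $X$. Nothing in your plan shows that $\A\simeq Z(X)$ forces $\A=Z(X)$. So under your reading of (2) the forward direction is unproved, and under the abstract reading (the one the paper uses) the backward direction is unproved. Either way the equivalence is not established.

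What is missing is a transport of the ring structure along an arbitrary $\phi$. Here is one way to do it.
\begin{enumerate}
\item Since $\A$ is a $\sigma$-algebra, $(X,\tau_{\A})$ is a $P$-space by Lemma~\ref{Z(X)=B(X)}. So $Z(X)=\text{CO}(X)$, and $g^{-1}[a,b)$ is clopen for every $g\in C^*(X)$.
\item For finite partitions of $X$ by members of $\A$, the assignment $\sum_i r_i\chi_{A_i}\mapsto\sum_i r_i\chi_{\phi(A_i)}$ is a ring isomorphism from the $\A$-simple functions onto the clopen-simple functions. It is isometric for the sup norm because $\phi(A)=\emptyset$ if and only if $A=\emptyset$.
\item The simple functions are uniformly dense in $M^*(X,\A)$ and in $C^*(X)$, and both rings are uniformly complete (this is where the $\sigma$-algebra hypothesis enters). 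Hence the map extends to an isomorphism $M^*(X,\A)\simeq C^*(X)$.
\item $M(X,\A)$ and $C(X)$ are the classical rings of quotients of $M^*(X,\A)$ and $C^*(X)$. If $h$ is a non-zero-divisor of the bounded ring, then $h\chi_{Z(h)}=0$, and $\chi_{Z(h)}$ lies in that ring because $Z(h)$ is elementary, resp.\ clopen. So $Z(h)=\emptyset$, and $h$ is a unit of the large ring (Lemma~\ref{units}). Conversely, every $f$ equals $\frac{f}{1+f^2}\cdot(1+f^2)$, where $\frac{1}{1+f^2}$ is a bounded non-zero-divisor. So the isomorphism of bounded rings extends to $M(X,\A)\simeq C(X)$.
\end{enumerate}
The paper does the transport through realcompact extensions instead. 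It shows that $\phi$ sends real $\A$-ultrafilters to real $z$-ultrafilters, giving a bijection $\varphi:\upsilon_{\A}X\to\upsilon X$ with $\varphi(\text{cl}_{\upsilon_{\A}X}A)=\text{cl}_{\upsilon X}\phi(A)$. From this it concludes $M(X,\A)\simeq M(\upsilon_{\A}X,\bar\A_R)\simeq C(\upsilon X)\simeq C(X)$.
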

\begin{proof}
(1) $\Rightarrow$ (2). Suppose that $\psi : M(X, \A)\to C(X, \tau_{\A})$ is a ring isomorphism. Then, $\psi$ takes real maximal ideals of $M(X,\A)$ to real maximal ideals of $C(X,\tau_{\A})$ and vice-versa. In other words,    $\psi$ defines a one to one  mapping $\varphi$ from $\upsilon_{\A}X$ onto $\upsilon X$ where for any $p\in\upsilon_{\A}X$,
$$M^{\varphi(p)}=\{f\in C(X) : \psi^{-1}(f)\in M^p\}.$$
Here, $M^p $ (resp., $M^{\varphi(p)}$) is the real maximal ideal of $M(X,\A)$ (resp., $C(X, \tau_{\A})$) corresponding to $p\in\upsilon_{\A}X$ (resp., $\varphi(p)\in\upsilon X$). Moreover, using \cite[Theorem 7.3]{GJ} and Corollary \ref{G-K theorem},  for any $f\in M(X,\A)$ we have
\begin{align*}
\varphi(\text{cl}_{\upsilon_{\A}X}Z(f))&=\{\varphi(p) : p\in \text{cl}_{\upsilon_{\A}X}Z(f)\} \\
&=\{\varphi(p) : f\in M^p \} \\
&=\{q\in\upsilon X : \psi(f)\in M^q\}=\text{cl}_{\upsilon X}Z(\psi(f)). \tag{1}
\end{align*}
Now, define $\phi: \A\to Z(X)$ by $\phi(A)=\varphi(\text{cl}_{\upsilon_{\A}X}A)\cap X=Z(\psi(f))$, where $A=Z(f)$ for some $f\in M(X, \A)$. Using (1),  $\phi$    is a well-defined  one to one mapping and it is clearly onto. Moreover, by   Corollary \ref{beta X}(5) and  since $\varphi$ is one to one, for any $A, B\in \A$, we have
\begin{align*}
\phi(A\cap B) &=\varphi(\text{cl}_{\upsilon_{\A}X}(A\cap B))\cap X\\
&=\varphi(\text{cl}_{\upsilon_{\A}X}A\cap \text{cl}_{\upsilon_{\A}X}B)\cap X\\
&=\varphi(\text{cl}_{\upsilon_{\A}X}A)\cap \varphi(\text{cl}_{\upsilon_{\A}X}B)\cap X\\
&= \phi(A)\cap \phi(B).
\end{align*}
A similar argument shows that $\phi(A\cup B)= \phi(A)\cup \phi(B)$ and $\phi(A^c)=\phi(A)^c$, as $\varphi$ is onto as well. This is show that $\phi$ is a Boolean algebra isomorphism, see also \cite[2.8]{W1}. Finally, using \cite[1.14(a)]{GJ},  $Z(X)$ is closed under countable intersection and  by \cite[Theorem 8.7(IV)]{GJ}, for any countable family of elementary sets $\{A_n\}_{n\in\mathbb N}$, we have 
$$\varphi(\bigcap_{n=1}^\infty\text{cl}_{\upsilon_{\A} X}A_n)=\bigcap_{n=1}^\infty\varphi(\text{cl}_{\upsilon_{\A} X}A_n)=\bigcap_{n=1}^\infty\text{cl}_{\upsilon X}Z_n=\text{cl}_{\upsilon X}Z_0,$$
where $Z_n=\phi(A_n)$ and $Z_0=\bigcap_{n=1}^\infty Z_n$. It follows then $\text{cl}_{\upsilon_{\A} X}A_0=\bigcap_{n=1}^\infty\text{cl}_{\upsilon_{\A} X}A_n$ or equivalently $A_0=\bigcap_{n=1}^{\infty}A_n$, where $A_0=\phi^{-1}(Z_0)\in\A$. This means that $\A$ is closed under countable intersections (unions), i.e., $\A$ is a $\sigma$-algebra.

(2) $\Rightarrow$ (1). Let $\phi: \A\to Z(X)$ be the Boolean isomorphism. Thus using Lemma \ref{A-ultrafilter}, \cite[Theorem 5.14]{GJ} and \cite[Theorem 3.9]{ABS}, it is easy to see that,  the image of every real $\A$-ultrafilter is a real $z$-ultrafilter. In other words, $\phi$ defines a one to one mapping $\varphi$ from $\upsilon_{\A}X$ onto $\upsilon X$ with the following property:
\begin{align*}
\varphi(\text{cl}_{\upsilon_{\A}X}A)=\text{cl}_{\upsilon X}\phi(A) \;\;\;\text{, for any } A\in\A. \tag{2}
\end{align*}
Now, define $\psi: C(\upsilon X)\to M(\upsilon_{\A}X, \bar\A_R)$ by $\psi(f)=f\circ\varphi$. Using property (2), $f\circ\varphi^{-1}(U)\in\bar\A_R$, for   any open subset $U$ of $\mathbb R$. Note that the zero-sets of  $\upsilon X$ are of the form $\text{cl}_{\upsilon X} Z$, for some $Z\in Z(X)$, by \cite[Remark 8.8(b)]{GJ}. This means that $\psi$ is well defined and since $\varphi$ is invertible, $\psi$ is one to one and onto. Moreover, clearly, $\psi$ is a ring homomorphism and therefore $C(\upsilon X)\simeq M(\upsilon_{\A}X, \bar\A_R)$. Now the result follows from \cite[Remark 8.8(a)]{GJ} and the argument after Corollary \ref{uX properties}.
\end{proof}
Now, we return to \cite[Question 5.3]{ABS} and provide conditions on an algebra $\A$ in a set $X$ such that $M(X, \A)$ is isomorphic to $C(Y)$, for some topological space $Y$. Note that, in particular, in this case $\A$ is a $\sigma$-algebra, see also the proof of the implication (1)$\Rightarrow$(2) in Proposition \ref{M=C(X)}.
First, recall from \cite{HJ}, that a $\phi$-algebra is an archimedean $f$-algebra with an identity element $1$, see Definition 1.5 in the same paper. If $A$ is a $\phi$-algebra, $\mathfrak M(A)$ denote the space of all maximal absolutely convex (or $l$-ideals) ideals of $ A$ equipped with the hull-kernel  topology, which is compact by \cite[Theorem 2.3]{HJ}. In addition, Theorem 2.3 in the same reference implies that every $\phi$-algebra $A$ is isomorphic to an algebra $\bar A$ of extended (real-valued) functions on $\mathfrak M(A)$. Using this isomorphism, the subspace of  real maximal ideals of a $\phi$-algebra $A$ (denoted by $\mathfrak R(A)$) has been defined, which consists of all points $p\in \mathfrak M(A)$ that every $a\in A$ is real-valued at $p$, see \cite[\S 4]{HJ}.

Now, if $\A$ is an algebra in a set $X$, then it is easy to see that $M(X, \A)$ is a $\phi$-algebra. Moreover, using Lemma \ref{multiple elements}, every ideal of $M(X, \A)$ is absolutely convex  and hence $(\beta_{\A}X, \tau_{\bar\A})= \mathfrak M(M(X, \A))$.    Moreover, by Corollary \ref{Theorem 8.4}, we conclude that $\mathfrak R(M(X, \A))=\upsilon_{\A}X$. Using these arguments, we have the following proposition.

\begin{proposition}\label{M=C(uX)}
Let $(X, \A)$ be a measurable space, then the following are equivalent.
\begin{enumerate}
\item $M(X, \A)\simeq C(Y)$, for some (topological) $P$-space $Y$.
\item $M(X, \A)\simeq C(\upsilon_{\A}X)$, by the mapping $f\to f^{\upsilon_{\A}}$.
\item $\bar\A_R=Z(\upsilon_{\A}X)$.
\end{enumerate}
\end{proposition}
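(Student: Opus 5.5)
I would prove the cycle (2)$\Rightarrow$(1)$\Rightarrow$(3)$\Rightarrow$(2). Throughout, $M(X,\A)$ is a $\phi$-algebra in which every ideal is absolutely convex. We have $\mathfrak M(M(X,\A))=\beta_{\A}X$ and $\mathfrak R(M(X,\A))=\upsilon_{\A}X$, and, since $\A$ is a $\sigma$-algebra, $f\to f^{\upsilon_{\A}}$ is a ring isomorphism of $M(X,\A)$ onto $M(\upsilon_{\A}X,\bar\A_R)$ (Corollary \ref{uX properties}(1) and the remark after it). Also $\bar\A_R$ is a $\sigma$-algebra in $\upsilon_{\A}X$ by \cite[Theorem 2.4]{AABS}.

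\textbf{(2)$\Rightarrow$(1).} Since $\bar\A_R$ is a $\sigma$-algebra and the space is reduced, Lemma \ref{Z(X)=B(X)} shows that $(\upsilon_{\A}X,\tau_{\bar\A_R})$ is a $P$-space, so $Y=\upsilon_{\A}X$ works.

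\textbf{(1)$\Rightarrow$(3).} Let $\psi:M(X,\A)\to C(Y)$ be an isomorphism with $Y$ a $P$-space. We may assume $Y$ is realcompact by replacing it with $\upsilon Y$; this is again a $P$-space by \cite[9L]{GJ}, and $C(\upsilon Y)=C(Y)$. A ring isomorphism of $\phi$-algebras preserves order, since squares are exactly the nonnegative elements in both rings. Hence it is an $l$-isomorphism and induces a homeomorphism $\mathfrak M(M(X,\A))\to\mathfrak M(C(Y))$, i.e. $\beta_{\A}X\cong\beta Y$, which restricts to a homeomorphism $\upsilon_{\A}X\cong\mathfrak R(C(Y))=\upsilon Y=Y$. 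Call this restriction $\varphi:\upsilon_{\A}X\to Y$. By Lemma \ref{value of f*}, evaluation at corresponding real points is transported, so $\psi(f)\circ\varphi=f^{\upsilon_{\A}}$.

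Since $Y$ is a $P$-space, every zero-set of $Y$ is clopen. Every zero-set of $\upsilon_{\A}X$ has the form $Z(g\circ\varphi)$ with $g\in C(Y)$, and $g=\psi(f)$ for some $f$. Hence it equals $Z(f^{\upsilon_{\A}})$, which lies in $\bar\A_R$ because $f^{\upsilon_{\A}}\in M(\upsilon_{\A}X,\bar\A_R)$. Conversely, each member of $\bar\A_R$ is clopen in $\tau_{\bar\A_R}$ and is therefore a zero-set. This gives $\bar\A_R=Z(\upsilon_{\A}X)$.

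\textbf{(3)$\Rightarrow$(2).} Apply Proposition \ref{beta_AX=beta X}, (5)$\Rightarrow$(4), to the elementary space $(\upsilon_{\A}X,\bar\A_R)$. This gives $M(\upsilon_{\A}X,\bar\A_R)=C(\upsilon_{\A}X,\tau_{\bar\A_R})$. Composing with the isomorphism $f\to f^{\upsilon_{\A}}$ yields (2).

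The main obstacle is the step (1)$\Rightarrow$(3): transporting the topology through an abstract ring isomorphism. There I must check that the homeomorphism $\mathfrak R(M(X,\A))\cong\mathfrak R(C(Y))$ identifies $\tau_{\bar\A_R}$ with the topology of $Y$. The hull-kernel topology on $\mathfrak M$ restricted to $\upsilon_{\A}X$ is $\tau_{\bar\A}$ restricted there, and for $C(Y)$ with $Y$ realcompact it is the original topology of $Y$ by \cite[Theorem 8.3]{GJ}. I would verify these identifications first, using \cite[\S4]{HJ}.
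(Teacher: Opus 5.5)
Your proof is correct. For (2)$\Rightarrow$(1) and (3)$\Rightarrow$(2) it is the paper's argument: Lemma \ref{Z(X)=B(X)} applied to the $\sigma$-algebra $\bar\A_R$, and Proposition \ref{beta_AX=beta X} applied to $(\upsilon_{\A}X,\bar\A_R)$ combined with the isomorphism $f\to f^{\upsilon_{\A}}$ onto $M(\upsilon_{\A}X,\bar\A_R)$.

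You differ from the paper in how you get out of (1). The paper proves (1)$\Leftrightarrow$(2) by quoting \cite[1.4]{H1}, which says that $f\to f^{\upsilon_{\A}}$ is the only possible isomorphism of $M(X,\A)$ with any $C(Y)$. It then reads off (3) from (2) exactly as in your final step: every continuous function on $\upsilon_{\A}X$ is some $f^{\upsilon_{\A}}$, whose zero-set $\text{cl}_{\upsilon_{\A}X}Z(f)$ lies in $\bar\A_R$.

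You instead prove the needed case of that uniqueness result by hand. The isomorphism $\psi$ does three things:
\begin{enumerate}
\item it carries maximal ideals to maximal ideals and real ones to real ones;
\item it carries the basic hull-kernel closed sets $\{p : f\in M^p\}=\text{cl}_{\beta_{\A}X}Z(f)$ to the sets $\{q : \psi(f)\in M^q\}$;
\item it therefore yields a homeomorphism $\varphi:(\upsilon_{\A}X,\tau_{\bar\A_R})\to Y$ satisfying $\psi(f)\circ\varphi=f^{\upsilon_{\A}}$.
\end{enumerate}
The identity in (3) already gives (2) directly.

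Your route is self-contained. It also shows that the $P$-space hypothesis on $Y$ plays no role: it is automatic, since $M(X,\A)$ is von Neumann regular. The paper's route is shorter but outsources the key step.

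Two small simplifications are available:
\begin{itemize}
\item You never use that zero-sets of $Y$ are clopen.
\item Order preservation is only needed to get $\psi(r)=r$ on constants. That also follows because two ring surjections onto $\mathbb R$ with the same kernel differ by a ring automorphism of $\mathbb R$, which must be the identity.
\end{itemize}
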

\begin{proof}
(1) $\Leftrightarrow$ (2). This is due to \cite[1.4]{H1}, which states that $\mu_{\A} : M(X, \A) \to C(\upsilon_{\A}X)$ defined by $\mu_{\A}(f)=f^{\upsilon_{\A}}$ is  the only isomorphism between $M(X, \A)$ and any $ C(Y)$. Note that, $(\upsilon_{\A}X, \tau_{\bar\A_R})$ is a $P$-space, by Lemma \ref{Z(X)=B(X)} and the argument after Definition \ref{real ideal}.

(2) $\Leftrightarrow$ (3). It follows from (2), that $Z(\upsilon_{\A}X)=\bar \A_R$. On the other hand, the converse follows from Proposition \ref{beta_AX=beta X} and the argument after Corollary \ref{uX properties}.
\end{proof}
After \cite[Question 5.3]{ABS}, the authors give a measurable space, the interval $[0, 1]\subseteq \mathbb R$ equipped with the $\sigma$-algebra $\A$ of all Lebesgue measurable sets on $[0, 1]=:X$, to violate the possibility of $M(X, \A)\simeq C(X, \tau_{\A})$. In fact, using the previous result and since $(X, \A)$ is a realcompact measurable space   $M(X, \A)\simeq C(Y)$, for no topological space $Y$, note that $Z(X)=\mathcal P(X)\ne \A$.

However, for a measurable space $(X, \A)$, we   may have  $M(X, \A)$ is isomorphic to a $C(Y)$, for a $P$-space $Y$, but $M(X, \A)\ncong C(X, \tau_{\A})$, as the following example illustrate this.
\begin{example}
Let $X=\mathbb W$ be the set of all countable ordinals and $\A$ be the $\sigma$-algebra described in Example \ref{realcompact&realmeasurable}. As it pointed out in that example, $(X, \tau_{\A})$ is a discrete space and hence $Z(X)=\mathcal P(X)$. Now, it is easy to see that $|Z(X)|\ne |\A|$, thus $Z(X)\ncong\A$ and therefore $M(X, \A)\ncong C(X, \tau_{\A})$, by Proposition \ref{M=C(X)}.

On the other hand, using Example \ref{realcompact&realmeasurable}, $\upsilon_{\A}X= \mathbb W^*=W(\omega_1+1)$ and
$$\bar\A_R=\{A\subseteq X : |A|\le\aleph_0 \}\cup\{A\cup\{\omega_1\} : A \text{ contains a tail in }\mathbb W \}.$$
Therefore, $\tau_{\bar\A_R}=\mathcal P(X)\cup \{A\cup\{\omega_1\} : A \text{ contains a tail in }\mathbb W \}$. Using Lemma \ref{Z(X)=B(X)} and the argument after Definition \ref{real ideal}, $Z(\upsilon_{\A}X)=CO(\upsilon_{\A}X)$, the set of all clopen subsets of $\upsilon_{\A}X$. Now, let $B\in CO(\upsilon_{\A}X)$ be a clopen subset of $\upsilon_{\A}X$. If $\omega_1\in B$, then $B\cap X$ contains a tail in $\mathbb W$ as $B$ is open; otherwise, since $B$ is closed, $B\subseteq X$ and $|B|\le\aleph_0$. This shows that $Z(\upsilon_{\A}X)=\bar\A_R$ and hence $M(X, \A)\simeq C(\upsilon_{\A}X)$, by Proposition \ref{M=C(uX)}.
\end{example}

\section*{Acknowledgment}
 The authors would like to thank the referee for careful reading of this paper and for
helpful comments and suggestions.

\section*{Declarations}
On behalf of all authors, the corresponding author states that there is no conflict of interest.

\bibliographystyle{amsplain}

\end{document}